\newcommand{\field}[1]{\mathbb{#1}}
\newcommand{\CC}{\field{C}}
\newcommand{\NN}{\field{N}}
\newcommand{\RR}{\field{R}}
\newcommand{\TT}{\field{T}}
\newcommand{\ZZ}{\field{Z}}
\newcommand{\under}{\backslash}
\newcommand{\Bb}{\mathcal{B}}
\newcommand{\Kk}{\mathcal{K}}
\newcommand{\Ll}{\mathcal{L}}
\newcommand{\Uu}{\mathcal{U}}
\newcommand{\Vv}{\mathcal{V}}
\newcommand{\lsp}{\operatorname{span}}
\newcommand{\clsp}{\overline{\lsp}}
\newcommand{\id}{\operatorname{id}}
\newcommand{\rk}{\operatorname{rank}}
\newcommand{\supp}{\operatorname{supp}}
\newcommand{\red}{\operatorname{r}}
\newcommand{\tr}{\operatorname{Tr}}
\newcommand{\Ind}{\operatorname{Ind}}
\newcommand{\Br}{\operatorname{Br}}
\newcommand{\Ext}{\operatorname{Ext}}
\newcommand{\Aut}{\operatorname{Aut}}
\newcommand{\dom}{\operatorname{dom}}
\newcommand{\Hh}{{H}}
\newcommand{\Sheaf}[1]{\mathsf{Sh}(#1)}
\newcommand{\Tgerms}{\mathcal{S}}
\newcommand{\tgcsa}[2]{\ensuremath{C^*_{\red}(#1 ; #2)}}
\newcommand{\unit}[1]{#1}
\newcommand{\fibredprod}[2]{\mathbin{\mbox{${{}_{#1}}{*}{{}_{#2}}$}}}
\begin{document}

\newtheorem{thm}{Theorem}[section]
\newtheorem{cor}[thm]{Corollary}
\newtheorem{posscor}[thm]{Possible Corollary}
\newtheorem{lemma}[thm]{Lemma}
\newtheorem{prop}[thm]{Proposition}
\newtheorem{goal}[thm]{Goal}
\newtheorem{conj}[thm]{Conjecture}
\newtheorem{notetoselves}[thm]{Note to selves}
\newtheorem{thm1}{Theorem}
\theoremstyle{definition}
\newtheorem{defn}[thm]{Definition}
\newtheorem{remark}[thm]{Remark}
\newtheorem{example}[thm]{Example}
\newtheorem{remarks}[thm]{Remarks}
\newtheorem{claim}[thm]{Claim}
\newtheorem{fact}[thm]{Fact}
\newtheorem{wish}[thm]{Wish}
\newtheorem{notation+h}[thm]{Notation}
\def\range{\operatorname{range}}

\numberwithin{equation}{section}

\title[A Dixmier-Douady theorem for Fell algebras]
{A Dixmier-Douady theorem for Fell algebras}

\author[an Huef]{Astrid an Huef}
\address{Department of Mathematics and Statistics\\
University of Otago\\
Dunedin 9054\\
New Zealand} \email{astrid@maths.otago.ac.nz}

\author[Kumjian]{Alex Kumjian}
\address{Department of Mathematics\\
University of Nevada \\
Reno, NV 89557\\
USA} \email{alex@unr.edu}

\author[Sims]{Aidan Sims}
\address{School of Mathematics and Applied Statistics\\
University of Wollongong \\
NSW 2522\\
Australia} \email{asims@uow.edu.au}

\subjclass[2000]{46L55}

\keywords{Brauer group; Dixmier-Douady; extension property; Fell algebra; groupoid; sheaf
cohomology}

\thanks{We thank Rob Archbold, Bruce Blackadar, Iain Raeburn and Dana Williams
for helpful discussions and comments. This research was supported by the Australian Research
Council.}

\date{July 2010; revised November 2011}

\begin{abstract}
We generalise the Dixmier-Douady classification of
continuous-trace $C^*$-algebras to Fell algebras. To do so, we
show that $C^*$-diagonals in Fell algebras are precisely
abelian subalgebras with the extension property, and use this
to prove that every Fell algebra is Morita equivalent to one
containing a diagonal subalgebra. We then use the machinery of
twisted groupoid $C^*$-algebras and equivariant sheaf
cohomology to define an analogue of the Dixmier-Douady
invariant for Fell algebras $A$, and to prove our
classification theorem.
\end{abstract}

\maketitle

\section{Introduction}

The Dixmier-Douady theorem classifies continuous-trace
$C^*$-algebras with spectrum $T$ up to Morita equivalence by
classes in a third cohomology group \cite{DD}, and the
Phillips-Raeburn theorem classifies their $C_0(T)$-automorphisms
using classes in the corresponding second cohomology group
\cite{PR}. The Dixmier-Douady Theorem has been very influential
in the study of $C^*$-dynamical systems (see for example
\cite{RW:dd}), and has been applied in differential geometry
\cite{Bry}, in mathematical physics \cite{BHM, CM, MR}, and in
the definition of twisted $K$-theory \cite{Ros}. The object of
this paper is to extend the Dixmier-Douady theorem to  Fell
algebras.

A Fell algebra is a $C^*$-algebra $A$ such that every
irreducible representation $\pi_0$ of $A$ satisfies Fell's
condition: there is a positive $b \in A$ and a neighbourhood
$U$ of $[\pi_0]$ in $\hat A$ such that $\pi(b)$ is a rank-one
projection whenever $[\pi] \in U$. The spectrum of a Fell
algebra is always locally Hausdorff \cite[Corollary~3.4]{AS},
and is Hausdorff if and only if the Fell algebra is  a
continuous-trace $C^*$-algebra. The class of  Fell algebras
coincides with the class of Type~I$_0$ algebras defined by
Pedersen in \cite[\S 6.1]{Ped} as the $C^*$-algebras generated
by their abelian elements (see page~\pageref{pg-fell}).
Fell algebras are the natural building blocks for Type~I
$C^*$-algebras: every Type~I $C^*$-algebra has a canonical
composition series consisting of Fell algebras
\cite[Theorem~6.2.6]{Ped} (by contrast there always exists a
composition series consisting of continuous-trace
$C^*$-algebras, but no canonical one).

Let $T$ be a locally compact, Hausdorff space. Given a
continuous-trace $C^*$-algebra $A$ with spectrum identified with
$T$, the Dixmier-Douady invariant $\delta(A)$ belongs to a
second sheaf-cohomology group $H^2(T,\Tgerms)$. The
Dixmier-Douady classification of continuous-trace $C^*$-algebras
says that if $A$ and $B$ are continuous-trace $C^*$-algebras
with spectra identified with $T$, then $\delta(A)=\delta(B)$ if
and only if there is an $A$--$B$-imprimitivity bimodule whose
Rieffel homeomorphism respects the identifications of
$\widehat{A}$ and $\widehat{B}$ with $T$.

If we replace continuous-trace $C^*$-algebras with Fell
algebras, we must deal with locally compact, locally Hausdorff
spaces $X$. There is no difficulty with sheaf cohomology for
such spaces, but the definition of our analogue of the
Dixmier-Douady invariant $\delta(A) \in H^2(\widehat{A},
\Tgerms)$ is more involved. We tackle the problem using the
machinery of $C^*$-diagonals and of twisted groupoid
$C^*$-algebras.

A $C^*$-diagonal consists of a $C^*$-algebra $A$ and a maximal
abelian subalgebra $B$ of $A$ with properties modeled on those
of the subalgebra of diagonal matrices in $M_n(\CC)$ (see
Definition~\ref{dfn:diagonal}). Diagonals relate to Fell
algebras as follows. Consider a Fell algebra $A$ with a
generating sequence $a_i$ of pairwise orthogonal abelian
elements such that $a := \sum_i \frac{1}{i}a_i$ is strictly
positive in $A$. That is, the hereditary subalgebra generated by
$a$ is equal to $A$. Then $B := \bigoplus_i \overline{ a_i A
a_i}$ is an abelian subalgebra of $A$, which we prove is a
diagonal. Indeed, Theorem~\ref{prp:construct diagonal} shows
that every separable Fell algebra $A$ is Morita equivalent to a
$C^*$-algebra $C$ with a diagonal subalgebra $D$ arising in just
this fashion. In outline the construction is straightforward.
Fix a sequence $a_i$ of abelian elements which generate $A$ and
let $\tilde{a}_i = a_i \otimes \Theta_{i\,i} \in A \otimes
\Kk(l^2(\NN))$ for each $i$. Let $C$ be the smallest hereditary
$C^*$-subalgebra containing all the $\tilde{a}_i$ and let
\[\textstyle
D := \bigoplus_i \overline{ \tilde{a}_i (A \otimes \Kk) \tilde{a}_i}
=  \bigoplus_i \big(\overline{ a_i A a_i} \otimes \Theta_{i\,i}\big).
\]
To prove that $D$ is a diagonal, we show in
Theorem~\ref{thm:ext prop => diag}  that diagonals in Fell
algebras $A$ can be characterised as the abelian subalgebras
$B$ which have the extension property relative to $A$: every
pure state of $B$ extends uniquely to a state of $A$. This
extends \cite[Theorem~2.2]{Kumjian1986} from continuous-trace
$C^*$-algebras to Fell algebras.
Example~\ref{ex:alex} shows that this characterisation does not
generalise to bounded-trace $C^*$-algebras.

$C^*$-diagonals arise naturally from topological twists: exact
sequences of groupoids
\[
\Gamma^{(0)} \to \Gamma^{(0)} \times \TT \to \Gamma \to R
\]
(just $\Gamma \to R$ for short) such that $\Gamma$ is a
$\TT$-groupoid and $R$ is a principal \'etale groupoid with
unit space $\Gamma^{(0)}$ (see page~\pageref{defn twist}).  The
associated twisted groupoid $C^*$-algebra $\tgcsa{\Gamma}{R}$
is a completion of the space of continuous $\TT$-equivariant
functions on $\Gamma$ and contains a subalgebra isomorphic to
$C_0(\Gamma^{(0)})$. Moreover, the pair
$\big(\tgcsa{\Gamma}{R}, C_0(\Gamma^{(0)})\big)$ is a
$C^*$-diagonal. Kumjian showed in \cite{Kumjian1986} that every
diagonal pair arises in this way: given a diagonal pair $(A,B)$
there exists a topological twist $\Gamma\to R$ and an
isomorphism $\phi : A \to \tgcsa{\Gamma}{R}$ such that $\phi(B)
= C_0(\Gamma^{(0)})$. Together with the results outlined in the
preceding paragraph, this implies that each Fell algebra is
Morita equivalent to a twisted groupoid $C^*$-algebra
$\tgcsa{\Gamma}{R}$.

Given a principal \'etale groupoid $R$, an isomorphism of
twists over $R$ is an isomorphism of exact sequences which
identifies ends. The isomorphism classes of topological twists
over $R$ form a group $\mathrm{Tw}(R)$ called the twist group
\cite{Kumjian1985}. It was shown in \cite{Kumjian1988} how the
twist group fits into a long exact sequence of
equivariant-sheaf cohomology. In particular, the boundary map
$\partial^1$ in this long exact sequence determines a
homomorphism from the twist group to the second
equivariant-cohomology group $H^2(R, \Tgerms)$. We use this
construction to define an analogue of the Dixmier-Douady
invariant for a Fell algebra $A$. Given a Fell algebra $A$ with
spectrum $X$, choose any twist $\Gamma \to R$ such that $A$ is
Morita equivalent to $\tgcsa{\Gamma}{R}$. Applying $\partial^1$
to the class of $\Gamma$ in the twist group of $R$ yields an
element $\partial^1([\underline{\Gamma}])$ of $H^2(R,
\Tgerms)$. We show that the local homeomorphism $\psi :
\Gamma^{(0)} \to X$ obtained from the state-extension property
yields an isomorphism $\pi^*_\psi$ from the usual
sheaf-cohomology group $H^2(X, \Tgerms)$ to the
equivariant-sheaf cohomology group $H^2(R, \Tgerms)$. We then
show that the class $\delta(A) =
(\pi^*_\psi)^{-1}\big(\partial^1([\underline{\Gamma}])\big) \in
H^2(X, \Tgerms)$ does not depend on our choice of twist $\Gamma
\to R$, and regard $\delta(A)$ as an analogue of the
Dixmier-Douady invariant for $A$.

This paves the way for our main result, Theorem~\ref{thm:DD
classification}: Fell algebras $A_1$ and $A_2$ are Morita
equivalent if and only if there is a homeomorphism between
their spectra such that the induced isomorphism
$H^2\big(\widehat{A}_1, \Tgerms\big) \cong
H^2\big(\widehat{A}_2, \Tgerms\big)$
carries $\delta(A_1)$ to $\delta(A_2)$. The invariant is exhausted in
the sense that each element of $H^2(X, \Tgerms)$ can be
realised as $\delta(A)$ for some Fell algebra $A$ with spectrum
$X$ (Proposition~\ref{prop:invariant exhausted}).

A motivating example was a generalisation of Green's theorem
for free and proper transformation groups to free
transformation groups $(G, X)$ where $X$ is a Cartan $G$-space.
Our Corollary~\ref{cor-greens} gives a Morita equivalence
between the transformation-group $C^*$-algebra $C_0(X) \rtimes
G$ and the  $C^*$-algebra of the equivalence relation
induced by a local homeomorphism from a Hausdorff space $Y$ to
the (not necessarily Hausdorff) quotient space $G\backslash X$.
This result and its construction are prototypes for our later
investigations of diagonals in Fell algebras. In particular, we
show that $\delta(C_0(X) \rtimes G)$  is trivial.

\section{Preliminaries}\label{sec:prelim}

For a $C^*$-algebra $A$, let $\widetilde{A}$ denote the
$C^*$-algebra $A + \CC 1$ obtained by adjoining a unit.
If $B$ is a $C^*$-subalgebra of $A$, we regard $\widetilde{B}$
as a unital $C^*$-subalgebra of $\widetilde{A}$
(so, $1_{\widetilde{B}} = 1_{\widetilde{A}}$).

Given a Hilbert space $\Hh$, denote by $\Kk(\Hh)$ the
$C^*$-algebra of compact operators on $\Hh$.  For $\xi, \eta
\in \Hh$, let $\Theta_{\xi,\eta} \in \Kk(\Hh)$ be the rank-one
operator defined by
$\Theta_{\xi,\eta}(\zeta) = (\zeta |\eta)\xi$.

A $C^*$-algebra $A$ is \emph{liminary}
if $\pi(A) =  \Kk(\Hh_\pi)$ for every irreducible representation
$\pi$. If $B$ is an abelian $C^*$-algebra we freely identify $B$ and $C_0(\hat B)$.

Let $G$ be a Hausdorff topological groupoid with unit space
$G^{(0)}$. We denote the range and source maps by $r,s:G\to
G^{(0)}$ and the set of composable pairs of $G$ by $G^{(2)}$.
Let $U$ be a subset of the unit space. We write  $UG$, $GU$ and
$UGU$ for $r^{-1}(U)$, $s^{-1}(U)$ and $r^{-1}(U)\cap
s^{-1}(U)$; $U$ is called \emph{full} \label{def-full} if $s(UG)=G^{(0)}$. A
subset $T$ of $G$ is a \emph{$G$-set} if the
restrictions of $s$ and $r$ to $T$ are one-to-one. We
implicitly identify units of $G$ with the associated identity
morphisms throughout.

A groupoid is \emph{principal} if the map $\gamma\mapsto
(r(\gamma),s(\gamma))$ is one-to-one. A groupoid is
\emph{\'etale} if the range map (equivalently the source map)
is a local homeomorphism.  If $G$ is an \'etale groupoid then
the unit space $G^{(0)}$ is open in $G$ and for each $u\in
G^{(0)}$ the fibre $r^{-1}(u)$ is discrete.

A topological space $X$ is \emph{locally compact} if every point of $X$ has a compact neighbourhood in $X$; and $X$ is  \emph{locally Hausdorff} if every point of $X$ has a Hausdorff neighbourhood.

\section{Fell and Type~\texorpdfstring{$I_0$}{I0} algebras}\label{sec:Fell}
In this section we show that the classes of Fell and  Type~$I_0$ $C^*$-algebras
coincide.

Let $A$ be a $C^*$-algebra. A positive element $a$ of $A$ is
\emph{abelian} if the hereditary $C^*$-subalgebra
$\overline{aAa}$ generated by $a$ is commutative. If $A$ is
generated as a $C^*$-algebra by its abelian elements then $A$
is said to be of \emph{Type $I_0$} \cite[\S6.1]{Ped}.  An
irreducible representation $\pi_0$ of $A$ satisfies
\emph{Fell's condition}\label{pg-fell} if there exist $b\in A^+$ and an open
neighbourhood $U$ of $[\pi_0]$ in $\widehat{A}$ such that
$\pi(b)$ is a rank-one projection whenever $[\pi]\in U$; this
property goes back as far as \cite{Fell}. If every irreducible
representation of $A$ satisfies Fell's condition then $A$ is
said to be a \emph{Fell algebra} \cite[\S3]{AS}. That the Fell
algebras coincide with the Type $I_0$ $C^*$-algebras is a
consequence of the following lemma which is stated in
\cite[\S3]{AS}.

\begin{lemma}\label{lem-elementary?}
Let $A$ be a $C^*$-algebra and $\pi_0$ an irreducible
representation of $A$. Then there exists  an abelian element
$a$ of $A$ such that $\pi_0(a)\neq 0$ if and only if $\pi_0$
satisfies Fell's condition.
\end{lemma}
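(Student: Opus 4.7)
My plan is to prove the two implications by separate constructions.

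For the forward direction, suppose $a$ is an abelian element of $A$ with $\pi_0(a) \neq 0$, so that $B := \overline{aAa}$ is commutative. The key auxiliary fact is that $\pi(a)$ has rank at most one for every irreducible representation $\pi$ of $A$: when $\pi(B) \neq 0$, the standard correspondence between irreducible representations of $A$ not annihilating a hereditary subalgebra and irreducible representations of that subalgebra forces $\pi|_B$ to be irreducible on the subspace $[\pi(B) H_\pi]$; since $B$ is commutative, that subspace is one-dimensional, and since $a \in B$ the range of $\pi(a)$ lies inside it. Now set $\mu_0 := \|\pi_0(a)\| > 0$ and use lower semicontinuity of $[\pi] \mapsto \|\pi(a)\|$ to obtain the open neighbourhood $U := \{[\pi] : \|\pi(a)\| > \mu_0/2\}$ of $[\pi_0]$. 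For $[\pi] \in U$, write $\pi(a) = \mu_\pi P_\pi$ with $P_\pi$ a rank-one projection and $\mu_\pi > \mu_0/2$. Choose a continuous $h : [0, \|a\|] \to [0,1]$ with $h(0)=0$ and $h \equiv 1$ on $[\mu_0/2, \|a\|]$; then $b := h(a)$ satisfies $\pi(b) = h(\mu_\pi) P_\pi = P_\pi$ for every $[\pi] \in U$ by the continuous functional calculus, verifying Fell's condition.

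For the backward direction, assume that $b$ and $U$ are furnished by Fell's condition. One cannot hope to show that $b$ itself is abelian, because outside $U$ the rank of $\pi(b)$ is unrestricted. The idea is to cut $b$ down by an approximate-unit element of the two-sided ideal
\[
I := \bigcap_{[\pi] \in \widehat{A}\setminus U} \ker \pi.
\]
Since $\widehat{A} \setminus U$ is closed, the Jacobson hull--kernel correspondence gives $\pi_0(I) \neq 0$, and so $\pi_0|_I$ is a non-degenerate irreducible representation of $I$. Let $\xi$ be a unit vector spanning the range of $\pi_0(b) = P_{\pi_0}$, and choose $e \in I^+$ from a positive approximate identity of $I$ far enough along that $(\pi_0(e) \xi | \xi) \neq 0$ (possible because $\pi_0(e) \to 1$ strongly). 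Set $c := b^{1/2} e b^{1/2} \in I^+$. Then $\pi_0(c) = P_{\pi_0} \pi_0(e) P_{\pi_0} = (\pi_0(e) \xi | \xi) P_{\pi_0} \neq 0$, using $\pi_0(b)^{1/2} = \pi_0(b)$.

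It remains to show $\overline{cAc}$ is commutative. For $[\pi] \notin U$, $\pi(c) = 0$ since $c \in I$. For $[\pi] \in U$, the same kind of computation as for $\pi_0$ yields $\pi(cxc) = (\pi(e) \xi_\pi | \xi_\pi)^2 (\pi(x) \xi_\pi | \xi_\pi) P_\pi$ for every $x \in A$, a scalar multiple of $P_\pi$. Hence for every $x, y \in A$ and every $[\pi] \in \widehat{A}$, the elements $\pi(cxc \cdot cyc)$ and $\pi(cyc \cdot cxc)$ agree (both are zero or the same scalar times $P_\pi$), so $cxc \cdot cyc - cyc \cdot cxc$ is annihilated by every irreducible representation of $A$ and therefore vanishes. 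The main obstacle is this backward direction: Fell's condition only controls $\pi(b)$ on the open set $U$, so the representations outside $U$ must be killed before the rank-one structure can be exploited, and passing to the ideal $I$ and cutting $b$ by a positive element of $I$ which does not annihilate $\xi$ under $\pi_0$ is precisely what accomplishes this.
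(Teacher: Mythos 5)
Your proof is correct and follows essentially the same route as the paper's: lower semicontinuity of $[\pi]\mapsto\|\pi(a)\|$ plus continuous functional calculus for the forward direction, and cutting the Fell element down by (an approximate-identity element of) the ideal whose spectrum is $U$ for the converse. The only real difference is that where the paper simply cites Pedersen's Lemma~6.1.3 ($a\in A^+$ is abelian iff $\rk\pi(a)\le 1$ for every irreducible $\pi$), you prove the needed instances of that fact directly --- via the hereditary-subalgebra correspondence in one direction, and by verifying commutativity of $\overline{cAc}$ representation-by-representation in the other --- which makes the argument more self-contained but not substantively different.
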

\begin{proof}
Suppose $a\in A^+$ is an abelian element such that $\pi_0(a)\neq
0$.  By rescaling we may assume that $\|\pi_0(a)\|=1$. By
\cite[Lemma~6.1.3]{Ped}, $\rk(\pi(a))\leq 1$ for all irreducible
representations $\pi$ of $A$.  Since $[\pi]\mapsto\|\pi(a)\|$ is
lower semicontinuous there exists a neighbourhood $U$ of
$[\pi_0]$ in $\hat A$ such that $\|\pi(a)\|>1/2$ when $[\pi]\in
U$. In particular, the spectrum $\sigma(\pi(a))$ of $\pi(a)$ is
$\{0,\lambda_\pi\}$ for some $\lambda_\pi>1/2$.  Fix $f\in
C([0,\|a\|])$ such that $f$ is identically zero on $[0,1/8]$ and
is identically one on $[1/4,\|a\|]$.  Set $b=f(a)$. If $[\pi]\in
U$ then
\[
\sigma(\pi(b))
    =\sigma(\pi(f(a)))
    =f(\sigma(\pi(a)))
    =f(\{0,\lambda_\pi\})
    =\{0,1\}.
\]
Since $\rk(\pi(a)) = 1$,  $\pi(b)$ is a rank-one
projection. Thus $\pi_0$ satisfies Fell's condition.

Now suppose that $\pi_0$ satisfies Fell's condition. Then there
exist $a\in A^+$ and an open neighbourhood $U$ of $[\pi_0]$ in
$\widehat{A}$ such that $\pi(a)$ is a  rank-one projection when
$[\pi]\in U$. Let $J$ be the closed ideal of $A$ such that
$\widehat{J}=U$.  There exists $x\in J^+$ such that
$\pi_0(axa)\neq 0$ (choose an approximate identity
$\{e_\lambda\}$ for $J$ and note that $\pi_0(e_\lambda)\to 1$
in $B(H)$).  Now $\pi(axa)=0$ whenever $[\pi]\not\in
\widehat{J}$, and $\rk(\pi(axa))\leq \rk(\pi(a))\leq 1$ when
$[\pi]\in\widehat{J}$. Thus $\rk(\pi(axa))\leq 1$ for all
irreducible representations of $A$, and  hence $axa$ is an
abelian element by \cite[Lemma~6.1.3]{Ped}.
\end{proof}

It is well known that if $p\in M(A)$ is a projection then $Ap$
is an $\overline{ApA}$--$pAp$-imprimitivity bimodule (see, for
example, \cite[Example~3.6]{tfb}).  More generally we have:

\begin{lemma}\label{lem-morita}
Let $A$ be a $C^*$-algebra. If $b\in A$ is self-adjoint\footnote{Added November 2011: unfortunately
our proof here assumes that $b=\sqrt{b^2}$, so it only works for positive $b$. But
Lemma~\ref{lem-morita} is true as stated.  To see that the left inner product is full we need to
show that $\overline{AbA}\subset \overline{Ab^2A}$. Since $\sqrt{|b|} \in \overline{Ab^2A}$ it
suffices to show that $b \in \overline{A\sqrt{|b|}A}$. Define $g(x)=\sqrt{|x|}$ for $x \in \RR$ and
define $f : \RR \to \RR$ by $f(x) = \sqrt{x}$ for $x \ge 0$ and $f(x) = -\sqrt{|x|}$ for $x < 0$.
Then $f(x)g(x) = x$ for all $x \in \RR$, so $b = f(b)\sqrt{|b|} \in A \sqrt{|b|} A$ as required.}
then $\overline{Ab}$ is an $\overline{AbA}$--$\overline{bAb}$-imprimitivity bimodule with actions
given by multiplication in $A$ and
\[
{}_{\overline{AbA}}\langle ab\,,\, cb\rangle=ab^2c^*\quad\text{and}\quad\langle ab\,,\, cb\rangle_{\overline{bAb}}=ba^*cb.
\]
\end{lemma}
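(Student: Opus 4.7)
The plan is to recognise the two prescribed formulas as
$\langle x, y\rangle_{\overline{bAb}} = x^*y$ and
${}_{\overline{AbA}}\langle x, y\rangle = xy^*$
restricted to $X := \overline{Ab}$, and then verify the
Rieffel imprimitivity-bimodule axioms for $X$ under the
actions of $\overline{AbA}$ and $\overline{bAb}$ by left and right
multiplication inside $A$. For $x = ab$ and $y = cb$ with $a, c \in A$, one computes
$x^*y = ba^*cb \in bAb \subset \overline{bAb}$ and
$xy^* = ab^2c^* \in Ab^2A \subset \overline{AbA}$, so after extending by
continuity the inner products take values in the advertised algebras.
Boundedness of the module actions is automatic from multiplication in $A$.

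With these identifications, sesquilinearity, conjugate symmetry, the
compatibility identity
${}_{\overline{AbA}}\langle x, y\rangle z = xy^*z = x\langle y, z\rangle_{\overline{bAb}}$,
and positivity (since $\langle x, x\rangle_{\overline{bAb}} = x^*x \ge 0$
in $A$, and similarly for the left inner product) are all immediate from
the $C^*$-algebra structure of $A$. The norm on $X$ induced by either
inner product agrees with the ambient norm of $A$ on $X$ via the
$C^*$-identity, so $X$ is automatically complete. For right-fullness, an
approximate-identity argument places $b$ itself in $\overline{Ab} = X$,
and then every $bab \in bAb$ equals
$\langle b, ba\rangle_{\overline{bAb}}$, so the closed linear span of the
right inner products is $\overline{bAb}$.

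The main step is left-fullness, which amounts to the equality
$\overline{Ab^2A} = \overline{AbA}$; as the footnote flags, this needs the
positivity, not merely self-adjointness, of $b$. It suffices to show
$b \in \overline{Ab^2A}$, and I would do this by continuous functional
calculus: set $h_\epsilon(t) := t^{1/2}/(t+\epsilon)$, which is continuous
on $[0, \|b\|]$ and vanishes at $0$, so $h_\epsilon(b) \in A$; a short
uniform-convergence check shows
$h_\epsilon(b) b^2 h_\epsilon(b) = b^3/(b+\epsilon)^2 \to b$ in norm,
whence $b \in \overline{Ab^2A}$. For truly self-adjoint $b$ the footnote's
decomposition $b = f(b)\sqrt{|b|}$ reduces the task to showing
$\sqrt{|b|} \in \overline{Ab^2A}$, which the same functional-calculus
technique handles on the positive element $b^2$. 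Once both
inner products are full, all Rieffel axioms are in place.
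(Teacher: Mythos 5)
Your proposal is correct and follows essentially the same route as the paper: it treats the inner products and actions as restrictions of the standard $A$--$A$-bimodule structure so that only fullness of the two inner products needs checking, and it establishes left fullness by showing $b \in \overline{Ab^2A}$ --- your explicit approximation $h_\epsilon(b)b^2h_\epsilon(b) = b^3/(b+\epsilon)^2 \to b$ for positive $b$, together with the footnote's factorisation $b = f(b)\sqrt{|b|}$ and the same trick applied to $b^2$ in the self-adjoint case, simply makes concrete the paper's appeal to $\sqrt{|b|} \in C^*(\{b^2\}) \subset \overline{Ab^2A}$. One small slip worth fixing: in the right-fullness step the generator should be $\langle b, ab\rangle_{\overline{bAb}} = b^*ab = bab$ rather than $\langle b, ba\rangle_{\overline{bAb}}$, since $ba$ need not lie in $\overline{Ab}$.
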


\begin{proof}
The actions and inner products are restrictions of those on the
standard $A$-$A$-bimodule $A$, so we need only check that both
inner products are full.  The right inner product is full
because products are dense in $A$ and the left inner product is
full because $b \in C^*(\{b^2\}) \subset A$, so
${\overline{AbA}}={\overline{Ab^2A}}$.
\end{proof}

By \cite[Corollary~3.4]{AS}, the spectrum of a Fell algebra is
locally Hausdorff. So Fell algebras may be regarded as locally
continuous-trace $C^*$-algebras; since they are generated by
their abelian elements, they may also be regarded as locally
Morita equivalent to a commutative $C^*$-algebra. We make this
precise in the following theorem.


\begin{thm}\label{thm-fell}
Let $A$ be a $C^*$-algebra. The following are equivalent:
\begin{enumerate}
\item $A$ is of Type $I_0$;
\item there exists a collection $\{I_a : a \in S\}$ of
    ideals of $A$ such that $A$ is generated by these ideals
    and each $I_a$ is Morita equivalent to a commutative
    $C^*$-algebra;
\item $A$ is a Fell algebra.
\end{enumerate}
\end{thm}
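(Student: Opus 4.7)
The plan is to close the cycle (1) $\Rightarrow$ (3) $\Rightarrow$ (2) $\Rightarrow$ (1), using Lemma \ref{lem-elementary?} as a bridge between abelian elements and Fell's condition and Lemma \ref{lem-morita} to manufacture Morita equivalences from positive abelian elements.

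For (1) $\Rightarrow$ (3): given an irreducible representation $\pi_0$ of a Type $I_0$ algebra $A$, the abelian elements generate $A$, so not all of them can be annihilated by $\pi_0$, and then Lemma \ref{lem-elementary?} supplies Fell's condition. For (3) $\Rightarrow$ (2) I would first derive as a bonus that every Fell algebra is Type $I_0$: the closed ideal $J$ generated by the abelian elements of $A$ must equal $A$, since otherwise an irreducible representation factoring through $A/J$ would vanish on every abelian element and contradict Fell's condition via Lemma \ref{lem-elementary?}. With this in hand, for each positive abelian $a$ Lemma \ref{lem-morita} shows that $I_a := \overline{AaA}$ is Morita equivalent to $\overline{aAa}$, which is commutative by the definition of abelian; and these ideals generate $A$ because each abelian $a$ lies in $I_a$.

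The main work is (2) $\Rightarrow$ (1). I would first note that an element $x$ of an ideal $I\subseteq A$ is abelian in $A$ iff it is abelian in $I$, since $\overline{xAx}=\overline{xIx}$ by approximating $y\in A$ by $e_n y \in I$ for an approximate identity $\{e_n\}$ of $I$. It therefore suffices to show that any $C^*$-algebra $I$ Morita equivalent to a commutative $C^*$-algebra $B$ is generated by its abelian elements. Given an $I$--$B$-imprimitivity bimodule $X$ and any $\xi \in X$, my plan is to show the positive element $b := {}_I\langle \xi,\xi\rangle$ is abelian. Since $B$ is commutative, every irreducible representation of $I$ arises by Rieffel induction from a character $\chi$ of $B$ on $X\otimes_B \CC$, and the linking identity ${}_I\langle \xi,\xi\rangle\cdot\eta = \xi\cdot\langle\xi,\eta\rangle_B$ then forces $b\cdot(\eta\otimes v) = \chi(\langle\xi,\eta\rangle_B)(\xi\otimes v)$, so $b$ has rank at most one in every irreducible representation of $I$; Pedersen's \cite[Lemma~6.1.3]{Ped} then gives that $b$ is abelian. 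Fullness of the left inner product together with a polarisation identity yields that such $b$ have dense span in $I$, closing the cycle.

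The key obstacle I expect is the rank-at-most-one computation in (2) $\Rightarrow$ (1): it uses commutativity of $B$ in an essential way (so that each irreducible representation of $B$ is one-dimensional and the induced Hilbert space takes the simple tensor-product form $X\otimes_B\CC$) and requires some care with the Rieffel-induction conventions; the rest of the argument is essentially routine applications of the two lemmas in the section.
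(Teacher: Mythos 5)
Your proof is correct, but it closes the cycle in the opposite direction to the paper's and replaces its two main external inputs with one direct argument. The paper proves (1)$\Rightarrow$(2)$\Rightarrow$(3)$\Rightarrow$(1): for (2)$\Rightarrow$(3) it invokes the fact that Morita equivalence preserves the continuous-trace property, so each $I_a$ is a continuous-trace algebra, Fell's condition holds for $\pi_0|_{I_{a_0}}$, and the witnessing element and neighbourhood transfer to $A$; for (3)$\Rightarrow$(1) it uses Lemma~\ref{lem-elementary?} together with Pedersen's result \cite[6.1.7]{Ped} that the $C^*$-subalgebra generated by the abelian elements is automatically an ideal. You instead run (1)$\Rightarrow$(3)$\Rightarrow$(2)$\Rightarrow$(1); your (1)$\Rightarrow$(3) and (3)$\Rightarrow$(2) use Lemmas~\ref{lem-elementary?} and~\ref{lem-morita} much as the paper does, and the genuinely new content is (2)$\Rightarrow$(1): the Rieffel-induction computation showing that $\pi\big({}_I\langle \xi,\xi\rangle\big)$ has rank at most one in every irreducible representation of $I$ (legitimate, since the Rieffel correspondence realises every irreducible representation of $I$ as induced from a character of the commutative algebra $B$), followed by \cite[Lemma~6.1.3]{Ped} and polarisation plus fullness of the left inner product, proves directly that any $C^*$-algebra Morita equivalent to a commutative one is generated by its abelian elements; together with your correct remark that abelianness of an element of an ideal can be tested inside the ideal, this gives Type $I_0$ without either the Morita invariance of continuous trace or \cite[6.1.7]{Ped}. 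The paper's route is shorter given those quoted facts; yours is more self-contained and isolates the concrete statement that left inner products over a commutative coefficient algebra are abelian elements, which is of independent interest.

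One small slip worth noting: in (3)$\Rightarrow$(2) you announce ``as a bonus'' that every Fell algebra is Type $I_0$, but the argument you give only shows that the closed \emph{ideal} generated by the abelian elements equals $A$, which is weaker than generation as a $C^*$-algebra; the paper bridges that gap precisely via \cite[6.1.7]{Ped}. Since your cycle reaches (1) through (2)$\Rightarrow$(1) and only needs that the ideals $I_a$ generate $A$, nothing in your proof is affected, but the ``bonus'' claim should either be dropped or be justified by citing \cite[6.1.7]{Ped}.
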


\begin{proof}
((1) $\Longrightarrow$ (2)) Suppose $A$ is Type~$I_0$.  Let  $S$ be the set of
abelian elements of $A$.  For each $a\in S$, the sub-$C^*$-algebra
$\overline{aAa}$ is commutative, and by Lemma~\ref{lem-morita},
$\overline{aAa}$ is Morita equivalent to the ideal
$I_a:=\overline{AaA}$ generated by $a$. Since $A$ is generated
by $S$ it is also generated, as a $C^*$-algebra, by the
collection of ideals $\{I_a:a\in S\}$.

((2) $\Longrightarrow$ (3)) Assume~(2). Fix an irreducible
representation $\pi_0$ of $A$. Since $A$ is generated by the
ideals $I_a$ there exists $a_0$ such that $\pi_0$ does not
vanish on $I_{a_0}$. Morita equivalence preserves the property
of being a continuous-trace $C^*$-algebra, so $I_{a_0}$ is
itself a continuous-trace $C^*$-algebra. The restriction of
$\pi_0$ to $I_{a_0}$ is an irreducible representation
which satisfies Fell's condition in $I_{a_0}$ (since $I_{a_0}$
is a continuous-trace $C^*$-algebra). So there exist $b\in
I_{a_0}^+$ and a neighbourhood $U$ of $\widehat{I}_{a_0}$ such
that $\pi(b)$ is a rank-one projection whenever $[\pi]\in U$.
Now $b\in A^+$ and $U$ can be viewed as an open subset of $\hat
A$, so $\pi_0$ satisfies Fell's condition in $A$.

((3) $\Longrightarrow$ (1)) Suppose that $A$ is a Fell algebra.
By Lemma~\ref{lem-elementary?}, for each irreducible
representation $\pi$ of $A$ there exists an abelian element
$a_\pi \in A$ such that $\pi(a_\pi)\neq 0$. Let $B$ be the
$C^*$-algebra generated by the set $S$ of all abelian elements
of $A$, so that $B=\clsp\{a_1\cdots a_n:n\in\NN, a_i\in S\}$.
Since $\pi|_B\neq 0$ for all $\pi\in\widehat{A}$, $B$ is not
contained in any proper ideal of $A$.  But $B$ is an ideal of
$A$ by \cite[6.1.7]{Ped} (the largest Type $I_0$ ideal in fact),
so $B=A$ and $A$ is Type~$I_0$.
\end{proof}

%
%

\section{Green's theorem for free Cartan transformation groups.}\label{sec:green}

Throughout this section let $G$ be a second-countable, locally
compact, Hausdorff group acting continuously on a
second-countable, locally compact, Hausdorff space $X$. We will
prove a generalisation of Green's theorem for free group actions
which are not proper but only locally proper. Green's theorem
says that if a group $G$ acts freely and properly on a space
$X$, then the crossed product $C_0(X)\rtimes G$ is Morita
equivalent to $C_0(X/G)$; it follows that the Dixmier-Douady
invariant of the continuous-trace $C^*$-algebra $C_0(X)\rtimes
G$  is trivial. In section~\ref{sec:DD}, we will establish the
analogous result for locally proper actions and our
generalisation of the Dixmier-Douady classification.

Recall from \cite[Definition~1.1.2]{P} that $X$ is  a
\emph{Cartan $G$-space} if each point of $X$ has a wandering
neighbourhood $U$; that is, a neighbourhood $U$ such that
$\{s\in G:s\cdot U\cap U\neq \emptyset\}$ is relatively compact
in $G$. If $X$ is a Cartan $G$-space with a free action of $G$
then we will just say that $(G,X)$ is a free Cartan
transformation group.

The action of $G$ on $X$ is \emph{proper} if every compact
subset of $X$ is wandering.
Equivalently, the action is proper
if the map $\phi : G \times X \to X \times X$ given by
$\phi(g,x) = (g\cdot x, x)$ is proper in the sense that the
inverse images of compact sets are compact.
If $U$ is a
wandering neighbourhood in $X$, then the action of $G$ on the
saturation $G\cdot U$ of $U$ is proper by
\cite[Proposition~1.2.4]{P}.

If $G$ acts freely on $X$ and $x,y\in X$ with $G\cdot x=G\cdot
y$, then there is a unique $\tau(x,y)\in G$ such that
\begin{equation}\label{eq:tau def}
y=\tau(x,y)\cdot x;
\end{equation}
this defines a function $\tau$ from
\[
X\times_{G\backslash X} X:=\{(x,y)\in X\times X:G\cdot x=G\cdot y\}
\]
to $G$.  If $X$ is a free Cartan $G$-space, then $\tau$ is
continuous by \cite[Theorem~1.1.3]{P}.

The next lemma follows from \cite[I.10.1
Proposition~2]{Bourbaki}.

\begin{lemma}\label{lem-proper}
Suppose that the group $G$ acts freely on $X$. Then the action
of $G$ on $X$ is proper if and only if $\phi:G\times X\to
X\times X$, $(g,x)\mapsto (g\cdot x, x)$ is a homeomorphism
onto a closed subset of $X\times X$.
\end{lemma}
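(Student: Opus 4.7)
The plan is to prove both implications essentially by unpacking the definition of properness as the preimage-of-compacts condition and exploiting the injectivity of $\phi$ that freeness affords.

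First I would record that freeness of the action makes $\phi$ injective: if $\phi(g,x)=\phi(h,y)$ then $x=y$ and $g\cdot x = h\cdot x$, so $g=h$ because the isotropy at $x$ is trivial. Since the action map $G\times X\to X$ is continuous, $\phi$ is also continuous.

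For the forward direction, suppose the action is proper, i.e.\ $\phi$ is proper. Because $X\times X$ is Hausdorff (both factors are) and $\phi$ is proper, $\phi$ is a closed map; in particular $\phi(G\times X)$ is closed in $X\times X$. A continuous injection which is also closed is a homeomorphism onto its image, so $\phi$ is a homeomorphism onto the closed set $\phi(G\times X)$, as required.

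For the reverse direction, assume $\phi$ is a homeomorphism onto the closed set $Y:=\phi(G\times X)\subseteq X\times X$. Let $K\subseteq X\times X$ be compact. Then $K\cap Y$ is a closed subset of $K$, hence compact; applying the continuous map $\phi^{-1}\colon Y\to G\times X$ we see that
\[
\phi^{-1}(K)=\phi^{-1}(K\cap Y)
\]
is compact. Thus $\phi$ is proper, i.e.\ the action is proper. The only subtle point is the closedness of proper maps into Hausdorff spaces in the forward direction, but this is exactly the content of Bourbaki I.10.1 Proposition~2 quoted in the statement, so no real obstacle arises.
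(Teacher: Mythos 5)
Your proof is correct in substance, but it takes a genuinely different route from the paper: the paper gives no argument at all, deducing the lemma directly from Bourbaki I.10.1 Proposition~2, which says that a continuous \emph{injection} is proper (in Bourbaki's sense that $f\times\operatorname{id}_Z$ is closed for every space $Z$) if and only if it is a homeomorphism onto a closed subspace; the translation to the paper's definition of a proper action (compact preimages of compact sets) is then the standard equivalence of the two notions of properness for maps into locally compact Hausdorff spaces. Your argument reproves this special case by hand, which is a reasonable, more self-contained route; the reverse direction in particular is clean and, as written, uses neither freeness nor local compactness.

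One step in the forward direction needs repair. You assert that since $X\times X$ is Hausdorff and $\phi$ has compact preimages of compact sets, $\phi$ is closed. Hausdorffness of the codomain alone does not yield this: for example, the identity map from a countably infinite discrete space onto the same set equipped with a non-discrete Hausdorff topology in which every compact set is finite has compact preimages of compact sets but is not closed. What makes your step valid here is the standing assumption of Section~4 that $X$ is locally compact (and second countable), so $X\times X$ is locally compact Hausdorff: if $C\subset G\times X$ is closed and $y\in\overline{\phi(C)}$, choose a compact neighbourhood $K$ of $y$; then $\phi(C)\cap K=\phi\bigl(C\cap\phi^{-1}(K)\bigr)$ is compact, hence closed, and so contains $y$, giving $y\in\phi(C)$. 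Either insert this argument (or an appeal to first countability and sequences), or simply invoke the Bourbaki proposition as the paper does; with that adjustment your proof is complete.
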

%
%


\begin{lemma}\label{lem-leftaction}
Suppose that $(G,X)$ is a free Cartan transformation group.
\begin{enumerate}
    \item There exists a covering  $\{U_i : i \in I\}$  of
        $X$ by $G$-invariant open sets such that $(G, U_i)$
        is proper for each $i$.
    \item Let $\{U_i : i \in I\}$ be a cover as in~(1), and
        let $W := \bigsqcup_i U_i$ be the topological
        disjoint union of the $U_i$. Then
     the map $\phi: G\times W \to W \times W$,
            $(g,x)\mapsto (g\cdot x, x)$ is a
            homeomorphism onto a closed subset of $W
            \times W$.
\end{enumerate}
\end{lemma}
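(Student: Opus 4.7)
The plan is to handle the two parts essentially independently, relying on Palais's result that the saturation of a wandering open set is a proper $G$-space, plus the preceding Lemma~\ref{lem-proper} that characterises properness (under the freeness assumption) via the map $\phi$.

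For part~(1), I would start from the Cartan hypothesis: for each $x \in X$ choose a wandering open neighbourhood $V_x$ of $x$ and set $U_x := G \cdot V_x$. Since the action is by homeomorphisms, $U_x = \bigcup_{g \in G} g \cdot V_x$ is open and obviously $G$-invariant, and of course $x \in U_x$, so $\{U_x : x \in X\}$ covers $X$. By \cite[Proposition~1.2.4]{P} (quoted in the text), the action of $G$ on $U_x = G \cdot V_x$ is proper. This gives the desired open cover.

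For part~(2), the key observation is that because each $U_i$ is $G$-invariant, the $G$-action on the disjoint union $W = \bigsqcup_i U_i$ respects the decomposition, so $\phi$ splits as a disjoint union of the restrictions $\phi_i : G \times U_i \to U_i \times U_i \subset W \times W$. The $G$-action on $W$ is still free (it is free on each piece), and the action on each $U_i$ is proper by~(1); since every orbit lies inside a single $U_i$, the image of $\phi_i$ is contained in the clopen subset $U_i \times U_i$ of $W \times W$. Applying Lemma~\ref{lem-proper} to the free proper action of $G$ on $U_i$, each $\phi_i$ is a homeomorphism onto a closed subset $C_i$ of $U_i \times U_i$. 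Thus $\phi$ is a bijection onto $\bigsqcup_i C_i$, and it is a homeomorphism onto its image because it is a homeomorphism on each clopen piece $G \times U_i$. Finally, $\bigsqcup_i C_i$ is closed in $W \times W$: a subset of the disjoint union $W \times W = \bigsqcup_{i,j} U_i \times U_j$ is closed precisely when its intersection with each $U_i \times U_j$ is closed, and here those intersections are $C_i$ (for $i = j$) and $\emptyset$ (for $i \neq j$).

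I do not foresee any serious obstacle; the lemma is essentially a bookkeeping exercise assembling the Cartan/Palais machinery with Lemma~\ref{lem-proper}. The one point that requires a little care is the final step verifying that the image is closed in the disjoint union topology (as opposed to merely closed in each $U_i \times U_i$), and the related point that the restriction of $\phi$ to each $G \times U_i$ lands inside $U_i \times U_i$ rather than jumping across components — both of which are immediate once one notes that the $U_i$ are $G$-invariant.
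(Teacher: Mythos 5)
Your proof is correct, and part~(1) coincides with the paper's argument (wandering neighbourhoods, saturate, cite \cite[Proposition~1.2.4]{P}). For part~(2), however, you take a genuinely different route. The paper works with $\phi$ on all of $G\times W$ at once: injectivity comes from freeness, the inverse is written explicitly as $(y,x)\mapsto(\tau(x,y),x)$ and shown continuous using the continuity of the translation map $\tau$ on the Cartan space $X$ (Palais's Theorem~1.1.3), and closedness of the range is proved by a sequential argument in which a convergent sequence $(g_n\cdot x_n^{i_n},x_n^{i_n})\to(y,x^j)$ is trapped, via $G$-invariance of $U_j$, inside $U_j\times U_j$, where Hausdorffness of $G\backslash U_j$ (properness of $(G,U_j)$) forces $y\in G\cdot x$. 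You instead observe that $G\times W=\bigsqcup_i(G\times U_i)$ and $W\times W=\bigsqcup_{i,j}U_i\times U_j$ as clopen decompositions, that $G$-invariance makes $\phi$ restrict to the maps $\phi_i:G\times U_i\to U_i\times U_i$, and you apply Lemma~\ref{lem-proper} to each free proper action $(G,U_i)$; the remaining work is pure bookkeeping with disjoint unions (closedness checked piecewise, with empty intersections off the diagonal blocks). Your version is more modular: it never invokes $\tau$ or a sequence argument, deriving everything from Lemma~\ref{lem-proper} applied locally, and it avoids any appeal to second countability. The paper's version buys an explicit global formula for $\phi^{-1}$ in terms of $\tau$, which it reuses almost verbatim in the proof of Lemma~\ref{lem-disjoint}(2), where the relevant space $(G\times X)*W$ is no longer a disjoint union of invariant pieces indexed as above and the piecewise reduction is less immediate. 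Both proofs are complete; the one point you rightly flag --- that each $\phi_i$ lands in the diagonal block $U_i\times U_i$ and that closedness must be checked in the disjoint-union topology --- is exactly where $G$-invariance of the $U_i$ is used, and you handle it correctly.
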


\begin{proof}
(1) If $U$ is a wandering neighbourhood in $X$, then its
saturation $G\cdot U$ is a proper $G$-space by
\cite[Proposition~1.2.4]{P}.  So choose a cover $\{V_i : i \in
I\}$ of open wandering neighbourhoods in $X$ and then take
$U_i=G\cdot V_i$ for all $I$.

(2) The action of $G$ on $W$ is $g\cdot x^i=(g\cdot x)^i$,
where, for $x \in U_i \subset X$, we write $x^i$ for the
corresponding element in the copy of $U_i$ in $W$. The action
of $G$ on $W$ is free because the action of $G$ on $X$ is free.

Since the action on $W$ is continuous, so is $\phi$.
Since the action on $W$  is free, $\phi$ is
one-to-one.  The inverse $\phi^{-1}:\range\phi\to G\times
W$ is given by $\phi^{-1}(y,x)=(\tau(x,y),x)$. The
map $\tau:X\times_{G\backslash X} X\to G$ of~\eqref{eq:tau def}
is continuous because $(G,X)$ is Cartan, so $\phi^{-1}$ is
continuous. To see that the range of $\phi$ is closed, suppose
that
\[
    (g_n\cdot x_n^{i_n}, x_n^{i_n})
\]
is a sequence in $\range\phi$ converging to $(y,x^j)$.  Then
$x_n^{i_n}\to x^j$, so $i_n=j$ eventually. Since $U_j$ is
$G$-invariant, $g_n\cdot x_n^{i_n}\in U_j$ eventually as well.
Since $g_n\cdot x_n^{i_n}\to y$ it follows that $y\in U_j$. In
particular, $G\cdot x_n^j$ converges to both $G\cdot x^j$ and
$G\cdot y$. But the action of $G$ on $U_j$ is proper, so
$G\backslash X$ is Hausdorff and hence $y\in G\cdot x$ as
required.
\end{proof}

The following definitions are from \cite[\S2]{MRW87}. Let
$\Gamma$ be a locally compact, Hausdorff groupoid and $Z$ a
locally compact space. We say $\Gamma$ \emph{acts on the left of
$Z$} if there is a continuous open map $\rho:Z\to \Gamma^{(0)}$
and a continuous map $(\gamma,x)\mapsto \gamma\cdot x$ from
$\Gamma*Z=\Gamma \fibredprod{s}{\rho}
Z:=\{(\gamma,x)\in\Gamma\times Z : s(\gamma)=\rho(x)\}$ to $Z$
such that
\begin{enumerate}
\item $\rho(\gamma\cdot x)=r(\gamma)$ for $(\gamma,x)\in
    \Gamma \fibredprod{s}{\rho} Z$;
\item if $(\gamma_1,x)\in \Gamma \fibredprod{s}{\rho} Z$
    and $(\gamma_2,\gamma_1)\in\Gamma^{(2)}$ then
    $(\gamma_2\gamma_1)\cdot x=\gamma_2\cdot (\gamma_1\cdot
    x)$;
\item $\rho(x)\cdot x=x$ for $x\in Z$.
\end{enumerate}
Right actions of $\Gamma$ on $Z$ are defined similarly, except
that we use $\sigma:Z\to \Gamma^{(0)}$ and $Z
\fibredprod{\sigma}{r} \Gamma:=\{(x,\gamma)\in Z\times \Gamma :
\sigma(x)=r(\gamma)\}$. An action of $\Gamma$ on the left of $Z$
is said to be \emph{free} if $\gamma\cdot x=x$ implies that
$\gamma = \rho(x)$, and is said to be \emph{proper} if the map
$\Gamma \fibredprod{s}{\rho} Z\to Z\times Z:(\gamma,x)\mapsto
(\gamma\cdot x,x)$ is proper.

\begin{defn}\label{dfn:groupoid equivalence}
If $\Gamma_1$ and $\Gamma_2$ are groupoids then an
\emph{equivalence from $\Gamma_1$ to
$\Gamma_2$}\label{pg-equivalence}  is a triple $(Z, \rho,
\sigma)$ where
\begin{enumerate}
\item $Z$ carries a free and proper left-action of
    $\Gamma_1$ with fibre map $\rho : Z \to
    \Gamma_1^{(0)}$, and a free and proper right-action of
    $\Gamma_2$ with fibre map $\sigma : Z \to
    \Gamma_2^{(0)}$
\item the actions of $\Gamma_1$ and $\Gamma_2$ on $Z$
    commute, and
\item $\rho$ and $\sigma$ induce  bijections of $Z/\Gamma_2$ onto
    $\Gamma_1^{(0)}$ and of $\Gamma_1\backslash Z$ onto $\Gamma^{(0)}_2$, respectively.
\end{enumerate}
\end{defn}

Since $\rho$ and $\sigma$ are continuous open maps Definition
\ref{dfn:groupoid equivalence}(3) implies that $\rho$ and
$\sigma$  induce homeomorphisms $Z/\Gamma_2 \cong
\Gamma_1^{(0)}$ and $\Gamma_1\backslash Z\cong\Gamma^{(0)}_2$.
We will often just say that $Z$ is a
$\Gamma_1$--$\Gamma_2$-equivalence, leaving the fibre maps
$\sigma,\rho$ implicit. The main theorem of \cite{MRW87} says
that if $\Gamma_1$ and $\Gamma_2$ are groupoids with Haar
systems and $Z$ is a $\Gamma_1$--$\Gamma_2$-equivalence, then
$C_c(Z)$ can be completed to a
$C^*(\Gamma_1)$--$C^*(\Gamma_2)$-imprimitivity bimodule
\cite[Theorem~2.8]{MRW87}, so the full groupoid $C^*$-algebras
of $\Gamma_1$ and $\Gamma_2$ are Morita equivalent.

If $(G,X)$ is a transformation group we view $G\times X$ as a
transformation-group groupoid with composable elements
\[
(G\times X)^{(2)}=\big\{ ((g,x),(h,y))\in
(G\times X)\times (G\times X): x=h\cdot y\big\}
\]
and $(g,h\cdot y)(h,y)=(gh,y)$; the inverse is given by
$(g,x)^{-1}=(g^{-1},g\cdot x)$.  We identify the unit space
$(G\times X)^{(0)}=\{e\}\times X$ with $X$, so $s(g,x)=x$ and
$r(g,x)=g\cdot x$ for all $(g,x) \in G \times X$.

Suppose that $(G,X)$ is a free Cartan transformation group. Let
$\{U_i : i \in I\}$ be a covering of $X$ by $G$-invariant open
sets such that $(G, U_i)$ is proper for each $i$; then each
$V_i:=G\under U_i$ is locally compact and Hausdorff. Let $q:X\to
G\under X$ be the quotient map. For each $i$, denote by
$q_i:U_i\to V_i$ the restricted quotient map, and let
$\psi_i:V_i\to q(U_i)\subseteq G\under X$ be the inclusion
homeomorphism. Let $Y := \bigsqcup_i V_i$ be the topological
disjoint union of the $V_i$, and define $\psi:Y\to G\under X$ by
$\psi|_{V_i}=\psi_i$. Then $\psi$ is a local homeomorphism and
$Y$ is locally compact and Hausdorff.

\begin{lemma}\label{lem-disjoint}
Suppose that $X$ is a free Cartan $G$-space, and adopt the
notation of the preceding paragraph.
\begin{enumerate}
\item The formula $(g,x)\cdot (x,y)=(g\cdot x,y)$  defines
    a  free left action of the groupoid $G\times X$ on
    $X*Y=\{(x,y)\in X\times Y:q(x)=\psi(y)\}$.
\item The formula   $(g,x)\cdot x=g\cdot x$  defines a free
    and proper left action of the groupoid $G\times X$ on
    $W := \bigsqcup_i U_i$.
\item There is a homeomorphism $\alpha : W \to X*Y$ such
    that $(g,x)\cdot \alpha(z)  = \alpha((g,x)\cdot z)$ for
    all $g,x,z$.
\end{enumerate}
\end{lemma}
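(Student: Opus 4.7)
The plan is to verify the three claims in sequence: parts (1) and (2) reduce to checking the groupoid-action axioms, while part (3) constructs an explicit equivariant homeomorphism. The $G$-invariance of each $U_i$ (already used to make $(G, U_i)$ proper) is what will make everything go through. For (1), I would take the fibre map to be first projection $\rho : X * Y \to X$, which is continuous and open. The only nontrivial axiom is that $(g\cdot x, y)$ still lies in $X * Y$, and this holds because $q(g\cdot x) = q(x) = \psi(y)$. Associativity and the unit axiom transfer directly from the $G$-action on $X$; freeness is immediate, since $(g,x)\cdot(x,y) = (x,y)$ forces $g\cdot x = x$, hence $g = e$ by freeness of $G$ on $X$, and then $(g,x) = (e,x) = \rho(x,y)$ is a unit.

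For (2), the fibre map $\tilde\rho : W \to X$ is the disjoint union of the open inclusions $U_i \hookrightarrow X$, and the action is well defined because each $U_i$ is $G$-invariant. The key observation for properness is that the fibred product $\{((g,x), z) \in (G\times X) \times W : x = \tilde\rho(z)\}$ is canonically homeomorphic to $G \times W$ via $((g, \tilde\rho(z)), z) \leftrightarrow (g, z)$, and under this identification the orbit map $((g,x), z) \mapsto ((g,x)\cdot z, z)$ coincides with the map $\phi$ of Lemma \ref{lem-leftaction}(2). That lemma says $\phi$ is a homeomorphism onto a closed subset of $W \times W$, and any such map is proper. Freeness again follows from freeness of $G$ on $X$.

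For (3), I would define $\alpha : W \to X * Y$ by $\alpha(z^i) := (z, q_i(z))$, where $z^i$ denotes $z \in U_i$ regarded as an element of the $i$-th summand of $W$. This lands in $X * Y$ because $\psi(q_i(z)) = \psi_i(q_i(z)) = q(z)$, continuity is clear from the disjoint-union topology, and equivariance follows from $G$-invariance of $q_i$:
\[
\alpha((g,z)\cdot z^i) = \alpha((g\cdot z)^i) = (g\cdot z, q_i(g\cdot z)) = (g\cdot z, q_i(z)) = (g,z)\cdot \alpha(z^i).
\]
For bijectivity, given $(x,y) \in X * Y$ with $y \in V_j$, I would use $G$-invariance of $U_j$ to force $x \in U_j$: since $q(x) = \psi(y) \in q(U_j)$ and $G\cdot U_j = U_j$, we get $x \in U_j$; then $q_j(x) = y$ by injectivity of $\psi_j$, so $\alpha(x^j) = (x,y)$ and the summand index $j$ is uniquely determined by $y$. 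The inverse $(x,y) \mapsto x^j$ is continuous on each slice $(X * Y) \cap (X \times V_j)$, so $\alpha$ is a homeomorphism.

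The one substantive obstacle is the pinning-down step in (3): without $G$-invariance of the $U_i$, a point $x \in X$ with $q(x) = \psi_j(y)$ need not lie in $U_j$, and the candidate inverse of $\alpha$ would be multi-valued. Everything else is direct verification of axioms, with Lemma \ref{lem-leftaction}(2) supplying the topological content of part (2).
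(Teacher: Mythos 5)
Your proposal is correct and follows essentially the same route as the paper: direct verification of the action axioms in (1), reduction of properness in (2) to the fact that the orbit map is a homeomorphism onto a closed subset of $W\times W$ (supplied by Lemma~\ref{lem-leftaction}(2), with the Cartan hypothesis giving continuity of the inverse via $\tau$), and in (3) the explicit map $\alpha(x^i)=(x,q_i(x^i))$, whose bijectivity rests, exactly as you note, on the $G$-invariance (saturation) of the sets $U_i$. Your identification of the fibred product $(G\times X)*W$ with $G\times W$ so that Lemma~\ref{lem-leftaction}(2) applies verbatim is a minor streamlining of the paper's argument, not a different method.
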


\begin{proof}
(1) Define $\rho: X*Y\to (G\times X)^{(0)}$ by $\rho(x,y)=x$.
Then
\[
    (G\times X)*(X*Y) = \{((g,x),(x',y)) : x=s(g,x)=\rho(x',y)=x'\}.
\]
It is straightforward to check that the formula
$(g,x)\cdot(x,y) := (g\cdot x, y)$ defines a free action of $G
\times X$ on the left of $X*Y$.

(2) As earlier, for $x \in U_i \subset X$, we write $x^i$ for
the corresponding element of $U_i \subset W$. Define
$\rho':W\to (G\times X)^{(0)}$ by $\rho'(x^i)=x$. Then
\[\textstyle
(G\times X)* W = \bigsqcup_i \{((g,x),x^i) : (g,x) \in G \times U_i\},
\]
and the formula $(g,x)\cdot x^i := (g\cdot x)^i$ defines a free
action of $(G \times X)$ on $W$.

By Lemma~\ref{lem-proper}, to see that the action is proper it suffices to verify that
\[\textstyle
    \phi:(G\times X)* W\to W \times W,
        \quad ((g,x),x^i)\mapsto((g\cdot x)^i, x^i)
\]
is a homeomorphism of $(G\times X) * W$ onto a closed subset of
$W \times W$.

Let $\tau : X \fibredprod{}{G \setminus X} X \to G$ be as
in~\eqref{eq:tau def}. Then $\tau$ is continuous since $X$ is a
Cartan $G$-space. So $\phi:(G\times X) * W \to \range\psi$ is
invertible with continuous inverse
\[
(y,x)\mapsto ((\tau(x,y),x),x).
\]
That the range of $\phi$ is closed is precisely
Lemma~\ref{lem-leftaction}(2).

(3) Define $\alpha: W \to X*Y$ by $\alpha(x^i)=(x,q_i(x^i))$.
Clearly $\alpha$ is continuous and one-to-one with
continuous inverse $(x, q_i(x^i))\mapsto x$.  To see that
$\alpha$ is onto, notice that $(x,y)\in X*Y$ for $y \in V_i$ if and only if
$y = q_i(x^i)$.
That $\alpha$ is equivariant is a simple calculation:
\[
(g,x)\cdot\alpha(x^i)
    =(g,x)\cdot (x^i, q_i(x^i))
    =(g\cdot x^i,  q_i(x^i))
    =\alpha(g\cdot x^i)
    =\alpha((g,x)\cdot x^i).\qedhere
\]
\end{proof}

Recall that under the relative topology
\[
    R(\psi)=\{(y_1, y_2)\in Y\times Y:\psi(y_1)=\psi(y_2)\}
\]
is a principal  groupoid with range and source maps $s(y_1, y_2)=y_2$,
$r(y_1, y_2)=y_1$, composition $(y_1, y_2)(y_2, y_3) = (y_1,
y_3)$ and inverses $(y_1,y_2)^{-1} = (y_2, y_1)$; $R(\psi)$ is \'etale because $\psi$ is a local homeomorphism.
We identify $R(\psi)^{(0)}$ with $Y$ via $(y,y) \mapsto y$.


\begin{thm}\label{thm-equiv} Let $(G,X)$ be a free Cartan
$G$-space. Then the transformation-group groupoid $G \times X$
is equivalent to the groupoid $R(\psi)$ described in the
preceding paragraph. More specifically, resume the notation of
Lemma~\ref{lem-disjoint} and define fibre maps $\rho : X*Y \to
(G \times X)^{(0)}$ and $\sigma : X*Y \to R(\psi)^{(0)}$ by
\[
\rho(x,y)=x\quad\text{ and }\quad \sigma(x,y) = y.
\]
Then the space $X*Y$ is a $(G\times X)$--$R(\psi)$ equivalence
under the actions
\begin{gather*}
(g,x)\cdot(x,y)=(g\cdot x, y)\quad\text{ and }\quad
(x,y)\cdot(y,z)=(x,z).
\end{gather*}
\end{thm}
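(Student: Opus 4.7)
The plan is to check the four requirements of Definition~\ref{dfn:groupoid equivalence}: that the two formulas define free and proper actions of $G\times X$ and $R(\psi)$ on $X*Y$; that these actions commute; and that $\rho$ and $\sigma$ descend to bijections of the orbit spaces onto the appropriate unit spaces.

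For the left action of $G\times X$, Lemma~\ref{lem-disjoint} does essentially all of the work. Part~(1) establishes a well-defined free action with fibre map $\rho(x,y)=x$, and parts~(2) and~(3) together supply a free and proper left action of $G\times X$ on $W$ along with a $(G\times X)$-equivariant homeomorphism $\alpha : W \to X*Y$. Properness transports across equivariant homeomorphisms, so the action of $G\times X$ on $X*Y$ is automatically proper.

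For the right action of $R(\psi)$, well-definedness (since $q(x)=\psi(y)$ and $\psi(y)=\psi(z)$ force $q(x)=\psi(z)$), the action axioms, and freeness (the equation $(x,y)\cdot(y,z)=(x,y)$ forces $z=y$, so $(y,z)$ is a unit) are one-line verifications. For properness, I would analyse
\[
\Phi : (X*Y)\fibredprod{\sigma}{r} R(\psi) \to (X*Y)\times(X*Y), \qquad \big((x,y),(y,z)\big)\mapsto \big((x,z),(x,y)\big),
\]
and show it is a homeomorphism onto the subset $E:=\{((a,z),(a',y)) : a=a'\}$ of the Hausdorff space $(X*Y)\times(X*Y)$. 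This subset is closed because it is the preimage of the diagonal of $X\times X$ (closed since $X$ is Hausdorff) under the continuous first-coordinate projections, and the assignment $E \to (X*Y)\fibredprod{\sigma}{r} R(\psi)$, $((a,z),(a,y))\mapsto((a,y),(y,z))$, is a continuous inverse of $\Phi$. A continuous bijection onto a closed subset of its codomain pulls compact sets back to compact sets, so $\Phi$ is proper.

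The two actions commute by direct computation, as $((g,x)\cdot(x,y))\cdot(y,z)$ and $(g,x)\cdot((x,y)\cdot(y,z))$ both equal $(g\cdot x,z)$. For the bijections of orbit spaces, $\sigma(x_1,y)=\sigma(x_2,y)$ forces $G\cdot x_1=G\cdot x_2$, and freeness of the $G$-action then supplies a unique $g$ with $(g,x_1)\cdot(x_1,y)=(x_2,y)$, giving injectivity on orbits; each $y\in V_i$ lifts to some $x\in U_i$ with $q(x)=\psi(y)$ by the definition $V_i=G\backslash U_i$, giving surjectivity onto $Y$. The argument for $\rho$ is symmetric via $(x,y_1)\cdot(y_1,y_2)=(x,y_2)$. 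I anticipate that the properness of the $R(\psi)$ action is the most delicate step, since $X*Y$ is cut out by an equality in the potentially non-Hausdorff quotient $G\backslash X$; but routing the argument through $\Phi$ and its image in the Hausdorff product $(X*Y)\times(X*Y)$ sidesteps this difficulty.
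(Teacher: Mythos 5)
Your proposal is correct and follows essentially the same route as the paper: both verify conditions (1)--(3) of Definition~\ref{dfn:groupoid equivalence}, obtaining freeness and properness of the left $(G\times X)$-action from Lemma~\ref{lem-disjoint}, checking commutativity of the two actions by the same direct computation, and showing that $\rho$ and $\sigma$ induce bijections of the orbit spaces exactly as the paper does. The only difference is that where the paper dismisses freeness and properness of the right $R(\psi)$-action as easy to check, you supply the details via the map $\Phi$; this argument is sound because you exhibit a continuous inverse, making $\Phi$ a homeomorphism onto the closed set $E$ (just note that the summarizing principle should be stated for homeomorphisms onto closed subsets rather than mere continuous bijections, which is exactly what your exhibited inverse provides).
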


\begin{proof}
We need to verify (1)--(3) of Definition~\ref{dfn:groupoid
equivalence}. By Lemma~\ref{lem-disjoint}, the left action of
$G \times X$ on $X*Y$ is free and proper.  It is easy to check
that the right action of $R(\psi)$ on $X*Y$ is free and proper,
verifying~(1).

To verify~(2), we calculate:
\[\begin{split}
\left( (g,x)\cdot(x,y) \right)\cdot (y,z)
    &=(g\cdot x,y)\cdot(y,z) \\
    &=(g\cdot x, z)=(g,x)\cdot(x,z)=(g,x)\cdot\left( (x,y) \cdot (y,z)\right).
\end{split}\]

It remains to verify~(3). Since both $\rho$ and $\sigma$ are
surjective, we need only show that both induce injections.
Suppose that $\rho(x,y)=\rho(x',y')$. Then certainly $x=x'$.
Since $(x,y),(x,y')\in X*Y$ we have $\psi(y)=q(x)=\psi(y')$, so
$(x,y)=(x, y')\cdot (y',y)$ with $(y,y')\in R(\psi)$. Hence
$\rho$ induces an injection.
Similarly, suppose $\sigma(x,y)=\sigma(x', y')$. Then $y=y'$.
Also, $q(x)=\psi(y)=q(x')$, so there exists $g\in G$ such that
$g\cdot x=x'$.  Thus $(g,x)\cdot(x,y)=(g\cdot x, y)=(x',y)$ and
$(g,x)\in G\times X$.  Hence, $\sigma$ induces an injection.
\end{proof}

We now obtain an analogue of Green's beautiful theorem for free
transformation groups:  if $G$ acts freely and properly on $X$
then $C_0(X)\rtimes G$ and $C_0(G\backslash X)$ are Morita
equivalent \cite[Theorem~14]{G}.  If the action is only locally
proper then $G\under X$ may not be Hausdorff, so that
$C_0(G\backslash X)$ is not a $C^*$-algebra --- the groupoid
$C^*$-algebra $C^*(R(\psi))$ serves as its replacement in this
case.

\begin{cor} \label{cor-greens}
Suppose that $(G,X)$ is a free Cartan transformation group. Then the
transformation-group $C^*$-algebra $C_0(X)\rtimes G$ is Morita
equivalent to the groupoid $C^*$-algebra $C^*(R(\psi))$.
\end{cor}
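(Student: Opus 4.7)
The plan is to combine Theorem~\ref{thm-equiv} with the groupoid-equivalence theorem of Muhly, Renault and Williams \cite[Theorem~2.8]{MRW87}, which states that if $\Gamma_1$ and $\Gamma_2$ are locally compact, Hausdorff, second-countable groupoids with Haar systems, and if $Z$ is a $\Gamma_1$--$\Gamma_2$-equivalence, then $C^*(\Gamma_1)$ and $C^*(\Gamma_2)$ are Morita equivalent. Thus almost all of the work has already been done: Theorem~\ref{thm-equiv} supplies the equivalence $X*Y$ between $G\times X$ and $R(\psi)$, so it remains only to identify $C^*(G\times X)$ with $C_0(X)\rtimes G$ and to check that both groupoids have Haar systems.

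First I would note that the transformation groupoid $G\times X$ carries a canonical Haar system given by the family $\{\delta_x\times\lambda : x\in X\}$, where $\lambda$ is a left Haar measure on $G$; and that the standard isomorphism $C^*(G\times X)\cong C_0(X)\rtimes G$ (obtained by viewing $C_c(G\times X)$ as $C_c(G,C_0(X))$ and comparing convolutions) is well known. Second, since $\psi:Y\to G\backslash X$ is a local homeomorphism, $R(\psi)$ is an \'etale groupoid, and therefore carries a canonical Haar system consisting of counting measures on the discrete fibres $r^{-1}(y)$. Both $G\times X$ and $R(\psi)$ are locally compact and Hausdorff (the latter because $Y$ is locally compact Hausdorff and $R(\psi)\subseteq Y\times Y$ is closed), and second-countability is inherited from our standing assumptions on $G$, $X$, and the cover $\{U_i:i\in I\}$, which we may take to be countable.

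With these ingredients in place, Theorem~\ref{thm-equiv} and \cite[Theorem~2.8]{MRW87} combine to give a $C^*(G\times X)$--$C^*(R(\psi))$-imprimitivity bimodule, the completion of $C_c(X*Y)$ in the appropriate pre-inner products. Transporting this through the isomorphism $C^*(G\times X)\cong C_0(X)\rtimes G$ yields the desired Morita equivalence between $C_0(X)\rtimes G$ and $C^*(R(\psi))$.

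The only step requiring any genuine care is the verification that the covering $\{U_i:i\in I\}$ from Lemma~\ref{lem-leftaction}(1) may be chosen countable, so that $Y=\bigsqcup_i V_i$ is second-countable and $R(\psi)$ satisfies the hypotheses of \cite[Theorem~2.8]{MRW87}; this follows from second-countability of $X$ and the fact that a cover of $X$ by open saturations of wandering neighbourhoods admits a countable subcover. Everything else is a direct appeal to results already in hand.
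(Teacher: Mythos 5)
Your proposal is correct and follows essentially the same route as the paper: cite the $(G\times X)$--$R(\psi)$ equivalence of Theorem~\ref{thm-equiv}, note that $R(\psi)$ is \'etale (counting measures) and that $G\times X$ has the natural Haar system built from Haar measure on $G$ and point masses, apply \cite[Theorem~2.8]{MRW87}, and identify $C^*(G\times X)$ with $C_0(X)\rtimes G$. Your extra remark on choosing the cover countable (via Lindel\"of) to secure second-countability of $Y$ is a harmless refinement of hypotheses the paper leaves implicit.
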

\begin{proof}
Since $R(\psi)$ is \'etale,  $R(\psi)$ has
a Haar systems given by counting measures. A natural Haar system for $G\times X$ is $\{\mu\times\delta_x:x\in X\}$, where $\mu$ is a left Haar measure on $G$ and $\delta_x$ is point-mass measure. So  the $(G\times X)$--$R(\psi)$
equivalence of Theorem~\ref{thm-equiv} induces a Morita
equivalence of full groupoid $C^*$-algebras by
\cite[Theorem~2.8]{MRW87}. Since $C^*(G\times X)$ and
$C_0(X)\rtimes G$ are isomorphic \cite[Remarks on p.
59]{Renault1980} the result follows.
\end{proof}

Let $(G,X)$ be a free Cartan transformation group. Then
$C_0(X)\rtimes G$ is a Fell algebra by \cite{aH}. Since the
property of being a Fell algebra is preserved under Morita
equivalence by \cite{aHRW}, $C^*(R(\psi))$ is also a Fell
algebra. Alternatively,  by \cite[Theorem~7.9]{C}  a
principal-groupoid $C^*$-algebra is a Fell algebra if and only
if the groupoid is Cartan in the sense that every unit has a
wandering neighbourhood (see Definition~7.3 of \cite{C}); it is
straightforward to verify the existence of wandering
neighbourhoods in $R(\psi)$.

\section{Fell algebras, the extension property and
\texorpdfstring{$C^*$}{C*}-diagonals}\label{sec:ext props}

In this section we show how to construct from a separable Fell
algebra $A$ a Morita equivalent $C^*$-algebra $C$ containing a
diagonal subalgebra in the sense of~\cite{Kumjian1986}. The
bulk of the work is to show that diagonal subalgebras of
separable Fell algebras can be characterised as those
abelian subalgebras which possess the extension property. We
start by verifying that the different notions of
diagonals in non-unital $C^*$-algebras which appear in the
literature coincide.

\subsection{Diagonals in nonunital \texorpdfstring{$C^*$}{C*}-algebras}

Let $A$ be a $C^*$-algebra and $B$ a $C^*$-subalgebra of $A$.
Recall that $P:A\to B$ is a \emph{conditional expectation} if
$P$ is a linear, norm-decreasing, positive map such that
$P|_B=\id_B$ and $P(ab)=P(a)b$, $P(ba)=bP(a)$ for all $a\in A$
and $b\in B$. We say $P$ is \emph{faithful} if $P(a^*a)=0$
implies $a=0$.

\begin{remark}\label{rem-cond-exp}
There are two other equivalent characterisations of a
conditional expectation:
\begin{enumerate}
\item $P:A\to B$ is a linear idempotent of norm 1;
\item $P:A\to B$ is a linear, norm-decreasing, completely
    positive map such that $P|_B=\id_B$ and $P(ab)=P(a)b$,
    $P(ba)=bP(a)$ for all $a\in A$ and $b\in B$.
\end{enumerate}
Our definition above implies (1); that (1) implies (2) is in \cite{tom} (see,  for example
\cite[Theorem~II.6.10.2]{bruce}),
and~(2) implies our definition since completely positive maps
are positive.
\end{remark}

\begin{defn}\label{dfn:diagonal}
Let $A$ be a separable $C^*$-algebra and let $B$ be an abelian
$C^*$-subalgebra of $A$. A \emph{normaliser} $n$ of $B$ in $A$
is an element $n\in A$ such that $n^* B n,  n B n^* \subset B$;
the collection of normalisers of $B$ is denoted by $N(B)$. A
normaliser $n$  is \emph{free} if $n^2 = 0$; the collection of
free normalisers of $B$ is denoted by $N_f(B)$. We  say  $B$ is
\emph{diagonal} or that $(A,B)$ is a \emph{diagonal pair} if
\begin{itemize}
\item[(D1)] $B$ contains an approximate identity for $A$;
\item[(D2)] there is a faithful conditional expectation $P
    : A \to B$; and
\item[(D3)] $\ker(P) = \clsp N_f(B)$.
\end{itemize}
\end{defn}
In~\cite[Definition~1.1]{Kumjian1986}, a pair $(A,B)$ of unital
$C^*$-algebras is said to be a diagonal pair if $1_B = 1_A$ and
(D2)~and~(D3) are satisfied, and a non-unital pair $(A,B)$ is
said to be a diagonal pair if the minimal unitisations form a
diagonal pair $(\widetilde{A}, \widetilde{B})$ (recall we
identify $1_{\widetilde{B}}$ with $1_{\widetilde{A}}$). If $A$
is unital then~(D1) implies that $B$ contains the unit of $A$,
so \cite[Definition~1.1]{Kumjian1986} and
Definition~\ref{dfn:diagonal} agree for unital $A$. We will use
the next two lemmas to show in Corollary~\ref{cor:diagonals are
diagonals}  that \cite[Definition~1.1]{Kumjian1986} and
Definition~\ref{dfn:diagonal} (which is the definition
implicitly used in \cite{Kumjian1985}) also coincide if $A$ is
nonunital.

\begin{lemma}\label{lem:expectation extends}
Let $A$ be a $C^*$-algebra with $C^*$-subalgebra $B$ and let
$P:A\to B$ be a conditional expectation.  Then $\widetilde
P:\widetilde A\to\widetilde B$ defined by $\widetilde
P((a,\lambda))=(P(a),\lambda)$ is also a conditional
expectation. Moreover, $P$ is faithful if and only if
$\widetilde P$ is.
\end{lemma}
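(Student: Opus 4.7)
The plan is to verify the four defining properties of a conditional expectation (linearity, norm-decreasing, positivity, identity-on-subalgebra together with the bimodule property) for $\widetilde{P}$, and then handle faithfulness as an essentially separate computation.

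First I would check linearity, the equation $\widetilde{P}|_{\widetilde{B}}=\id_{\widetilde{B}}$, and the bimodule property directly from the formula $\widetilde{P}((a,\lambda))=(P(a),\lambda)$. These are purely algebraic: using the multiplication $(a,\lambda)(b,\mu)=(ab+\lambda b+\mu a,\lambda\mu)$ in $\widetilde{A}$, the identity $\widetilde{P}((a,\lambda)(b,\mu))=\widetilde{P}((a,\lambda))(b,\mu)$ for $(b,\mu)\in\widetilde{B}$ reduces to $P(ab)=P(a)b$ and $P(\mu a)=\mu P(a)$, both of which hold for $P$; the right-sided version and $*$-equivariance are analogous.

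The one property that needs a genuine argument is that $\widetilde{P}$ is norm-decreasing (and positive). The cleanest route is via Remark~\ref{rem-cond-exp}: the conditional expectation $P$ is completely positive, and the standard unitization construction shows that $\widetilde{P}$ is then unital completely positive. Being UCP, $\widetilde{P}$ is automatically contractive (so norm-decreasing) and positive. Since $\widetilde{P}$ is linear, norm-decreasing, positive, satisfies $\widetilde{P}|_{\widetilde{B}}=\id_{\widetilde{B}}$, and has the bimodule property, it meets the definition of a conditional expectation.

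For faithfulness, I would compute $(a,\lambda)^*(a,\lambda)=(a^*a+\lambda a^*+\bar\lambda a,|\lambda|^2)$, so that
\[
\widetilde{P}\bigl((a,\lambda)^*(a,\lambda)\bigr)=\bigl(P(a^*a+\lambda a^*+\bar\lambda a),|\lambda|^2\bigr).
\]
If $\widetilde{P}$ is faithful, then applying it to $(a,0)^*(a,0)=(a^*a,0)$ shows $P(a^*a)=0$ forces $a=0$, so $P$ is faithful. Conversely, if $P$ is faithful and the displayed expression vanishes, then $|\lambda|^2=0$ gives $\lambda=0$, and then $P(a^*a)=0$ gives $a=0$. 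The only mild wrinkle is handling the cross terms $\lambda a^*+\bar\lambda a$, which disappears once we know $\lambda=0$ from the scalar component.

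I do not expect a serious obstacle; the single point requiring care is justifying that $\widetilde{P}$ is norm-decreasing, which is immediate once one invokes either the Tomiyama characterisation already cited in Remark~\ref{rem-cond-exp} or, equivalently, the fact that unitization preserves complete positivity.
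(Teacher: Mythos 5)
Your proof is correct and takes essentially the same route as the paper: the one nontrivial step, that $\widetilde P$ is norm-decreasing and positive, is handled in both via complete positivity of $P$ (Remark~\ref{rem-cond-exp}) and the fact that the unitization of a completely positive map is unital completely positive, hence contractive, and the faithfulness argument agrees modulo your working with elements $x^*x$ where the paper works with positive elements. The only cosmetic difference is the wrap-up: the paper invokes the idempotent-of-norm-one characterisation from Remark~\ref{rem-cond-exp}, whereas you verify the module properties and the definition directly.
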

\begin{proof}
Since $P:A\to B$ is a conditional expectation it is completely
positive by Remark~\ref{rem-cond-exp}. By
\cite[Lemma~3.9]{choi-effros}, $\widetilde P$ is also
completely positive, and the proof of
\cite[Lemma~3.9]{choi-effros} shows that $\widetilde P$ is
norm-decreasing. Since $\widetilde P(1_{\widetilde{A}}) =
1_{\widetilde{A}}$ and since $\widetilde P$ is idempotent, $\widetilde P$ is an idempotent of norm 1 and
hence is a conditional expectation by
Remark~\ref{rem-cond-exp}.

Now suppose that $P$ is faithful and that $(a,\lambda) \in
\widetilde{A}^+$ with $\widetilde P(a,\lambda)=0$. Since
$\widetilde P(a,\lambda) = (P(a), \lambda)$, we have $\lambda =
0$ and $P(a) = 0$.  Since $a \in A^+$ and $P$ is faithful, $a =
0$ also.  Hence, $\widetilde P$ is faithful. Conversely, if
$\widetilde P$ is faithful then so is its restriction $P$.
\end{proof}

\begin{lemma}\label{lem:exp kills normalisers}
Let $A$ be a $C^*$-algebra and $B$  an abelian $C^*$-subalgebra
of $A$.  Suppose that $B$ contains an approximate identity for
$A$. Then $n^*n \in B$ for all $n \in N(B)$. If, in addition,
$P : A \to B$ is a conditional expectation, then $P(n) = 0$
for all $n \in N_f(B)$.
\end{lemma}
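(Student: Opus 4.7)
The plan is to handle the two claims in order, using only the normaliser condition, the approximate identity hypothesis, the Schwarz inequality for conditional expectations, and the fact that $B$ is abelian.

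For the first claim, fix $n \in N(B)$ and an approximate identity $\{e_\lambda\}$ for $A$ contained in $B$. Because $n$ is a normaliser, each $n^* e_\lambda n$ lies in $B$. The approximate identity property gives $e_\lambda n \to n$ in norm, and therefore
\[
\|n^* e_\lambda n - n^* n\| \le \|n\| \, \|e_\lambda n - n\| \longrightarrow 0.
\]
Since $B$ is norm-closed, this forces $n^* n \in B$. Note the same argument applied to $n^* \in N(B)$ (which is a normaliser because the defining conditions are self-adjoint in $n$) yields $n n^* \in B$ as well.

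For the second claim, fix $n \in N_f(B)$ and write $f := n^*n$ and $g := nn^*$; by the first part both $f$ and $g$ lie in $B$. The free normaliser condition $n^2 = 0$ immediately gives
\[
f n = n^* (n^2) = 0 \quad \text{and} \quad n g = (n^2) n^* = 0.
\]
Applying $P$ and using the $B$-bimodule property together with $f, g \in B$ yields $f P(n) = P(f n) = 0$ and $P(n) g = P(n g) = 0$. Set $b := P(n) \in B$; we have shown $f b = 0$, and by commutativity in $B$ also $b^* f = 0$, so $f b^* b = 0$.

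To upgrade this to $b = 0$, invoke the Kadison--Schwarz inequality, which holds for any completely positive contraction (Remark \ref{rem-cond-exp}), giving
\[
b^* b = P(n)^* P(n) \le P(n^* n) = P(f) = f.
\]
Now multiply this inequality of positive elements of the abelian $C^*$-algebra $B$ by $b^* b \ge 0$: we obtain
\[
(b^* b)^2 \le (b^* b) f = 0,
\]
whence $b^* b = 0$, i.e.\ $P(n) = 0$, as required.

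There is no real obstacle here beyond noticing the small trick that packages the argument: the free-normaliser identity $n^2=0$ combines with the bimodule property to force $P(n)$ to be annihilated by $n^*n$, and then the Schwarz inequality $P(n)^*P(n) \le n^*n$ inside the abelian algebra $B$ squeezes $P(n)$ to zero. The only ingredient that might deserve a citation is the Kadison--Schwarz inequality for conditional expectations, which follows immediately from their complete positivity as recorded in Remark~\ref{rem-cond-exp}.
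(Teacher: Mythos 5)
Your proof is correct, and the first claim is handled exactly as in the paper (push the approximate identity $b_i \in B$ through $n^* b_i n \in B$ and take limits). For the second claim, however, you take a genuinely different route. The paper sets $a_k = (n^*n)^{1/k}$, invokes a spatial argument via the polar decomposition to get $na_k \to n$, and then computes $P(na_k) = P(n)a_k = a_kP(n) = P(a_kn) = 0$ using $a_kn = 0$; the conclusion $P(n)=0$ follows by continuity of $P$. You instead avoid any approximation of $n$: from $n^*n\,n = n^*n^2 = 0$ and the bimodule property you get $(n^*n)P(n) = 0$, and then the Kadison--Schwarz inequality $P(n)^*P(n) \le P(n^*n) = n^*n$ (valid since $P$ is a completely positive contraction, as recorded in Remark~\ref{rem-cond-exp}, or directly by expanding $P\bigl((n-P(n))^*(n-P(n))\bigr) \ge 0$) combined with commutativity of $B$ squeezes $P(n)^*P(n)$ to zero. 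What each approach buys: the paper's argument uses only the module property and continuity of $P$ but needs the (unproved, ``standard'') fact that $n(n^*n)^{1/k} \to n$; yours trades that spatial approximation for the Schwarz inequality, making the proof shorter and self-contained modulo that one standard inequality. One tiny stylistic point: the element $g = nn^*$ and the relation $ng=0$ play no role in your argument and could be dropped, and the step $b^*f=0$ follows simply by taking adjoints in $fb=0$ (no commutativity needed there), though commutativity is genuinely used to get $f\,b^*b = 0$ and to multiply the inequality $b^*b \le f$ by $b^*b$.
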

\begin{proof}
Fix $n \in N(B)$ and let  $(b_i)_{i \in I}$ be an approximate
identity for $A$ contained in $B$.   Then we have $n^*n =
\lim_{i \in I} n^* b_i n \in B$.

Now fix $n \in N_f(B)$. Set $a_k = (n^*n)^{1/k}$. A standard
spatial argument using the polar decomposition of $n$ shows that
$na_k \to n$. To see that $P(n)=0$, it suffices by continuity to
show that $P(na_k) = 0$ for all $k$. Fix $k$. Then
\[
P(na_k) = P(n)a_k = a_kP(n) = P(a_kn)
\]
since  $a_k \in B$ and $P$ is a conditional expectation. Since
$n \in N_f(B)$, we have $(n^*n) n = n^* n^2 = 0$ and it follows
that $a_kn = (n^*n)^{1/k} n = 0$. Hence, $P(na_k) = 0$ as
required.
\end{proof}

\begin{lemma}\label{lem-kerneloftildeP}
Suppose that $(A, B)$ is a diagonal pair with  expectation
$P:A\to B$. Let $\widetilde{P} : \widetilde{A} \to
\widetilde{B}$ be the conditional expectation of
Lemma~\ref{lem:expectation extends}. Then
\[
N_f(\widetilde B) = \{(n,0): n\in N_f(B)\} \ \text{and} \
\ker\widetilde{P} = \clsp N_f(\widetilde B).
\]
\end{lemma}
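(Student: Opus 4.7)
My plan is to dispose of the two assertions in order, handling the characterisation of $N_f(\widetilde B)$ first and then deducing the description of $\ker\widetilde P$ more or less formally.

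For the identification $N_f(\widetilde B) = \{(n,0) : n \in N_f(B)\}$, I would begin with the easier direction: if $(m,\lambda) \in N_f(\widetilde B)$, then expanding $(m,\lambda)^2 = 0$ in $\widetilde A$ gives $(m^2 + 2\lambda m,\lambda^2) = 0$, forcing $\lambda = 0$ and $m^2 = 0$. So $(m,\lambda) = (n,0)$ for some $n \in A$ with $n^2 = 0$; and for any $b \in B$ we have $n^*bn \in \widetilde B$, which combined with $n^*bn \in A$ and the observation $\widetilde B \cap A = B$ (valid since we adjoin a fresh unit even if $A$ is already unital) yields $n^*bn \in B$. The reverse inclusion $nbn^* \in B$ is symmetric, so $n \in N_f(B)$.

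Conversely, given $n \in N_f(B)$, certainly $(n,0)^2 = (n^2,0) = 0$. To check normalisation, expand
\[
(n,0)^*(b,\lambda)(n,0) = (n^*bn + \lambda n^*n,\, 0)
\]
for $(b,\lambda) \in \widetilde B$. Here $n^*bn \in B$ because $n \in N(B)$, and, crucially, $n^*n \in B$ by Lemma~\ref{lem:exp kills normalisers}, which applies because (D1) supplies an approximate identity for $A$ inside $B$. Hence the right-hand side lies in $B \subseteq \widetilde B$. A symmetric computation handles $(n,0)(b,\lambda)(n,0)^*$, confirming $(n,0) \in N_f(\widetilde B)$.

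For $\ker\widetilde P = \clsp N_f(\widetilde B)$, I would use the explicit formula $\widetilde P(a,\lambda) = (P(a),\lambda)$ to write $\ker\widetilde P = \{(a,0) : a \in \ker P\}$. By hypothesis (D3), $\ker P = \clsp N_f(B)$, and since the embedding $a \mapsto (a,0)$ from $A$ into $\widetilde A$ is linear and isometric, it commutes with closed linear spans. Combining this with the first part gives $\ker\widetilde P = \clsp\{(n,0) : n \in N_f(B)\} = \clsp N_f(\widetilde B)$.

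The only genuinely nontrivial step is verifying that the extra $\lambda n^*n$ term produced by the adjoined unit still lands in $B$; this is where Lemma~\ref{lem:exp kills normalisers} (and hence assumption (D1)) is indispensable. Everything else is bookkeeping in $\widetilde A = A + \CC 1$.
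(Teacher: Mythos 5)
Your proposal is correct and follows essentially the same route as the paper: both directions of the identification $N_f(\widetilde B)=\{(n,0):n\in N_f(B)\}$ are handled exactly as in the paper's proof (expanding the square to force $\lambda=0$ and $n^2=0$, and invoking Lemma~\ref{lem:exp kills normalisers} via (D1) to see that the extra term $\lambda n^*n$ lies in $B$), and the kernel statement is deduced from (D3) and the explicit formula for $\widetilde P$ just as in the paper.
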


\begin{proof}
Fix $n \in N_f(B)$ and $(b, \mu) \in \widetilde B$. Then $(n,
0)^2 = 0$ and
\[
(n^*, 0)(b, \mu)(n,0) = (n^*bn + \mu n^*n, 0) \in \widetilde B
\]
by Lemma~\ref{lem:exp kills normalisers}. Similarly, $(n, 0)(b, \mu)(n^*, 0)  \in \widetilde B$.
Hence, $(n, 0) \in N_f(\widetilde B)$, giving $\{(n,0) : n \in
N_f(B)\} \subset N_f(\widetilde{B})$.
Now fix $c = (n, \lambda) \in N_f(\widetilde B)$. Since $(n^2 +
2\lambda n, \lambda^2) = c^2 = 0$, we have $\lambda = 0$ and
$n^2 = 0$. We now verify that $n$ normalises $B$.  Fix $b \in
B$.  Then since $(n, 0) \in N_f(\widetilde B)$ and $(b, 0) \in
\widetilde B$ we have
\[
(n^*bn, 0) = (n, 0)^*(b, 0)(n, 0) \in \widetilde B.
\]
Hence $n^*bn \in B$. Similarly, $nbn^* \in B$.
This proves $N_f(\widetilde B) = \{(n,0): n\in N_f(B)\}$.

Since  $(A,B)$ is a diagonal pair, we have $\ker P = \clsp
N_f(B)$.  Hence,
\[
\ker\widetilde P =\{(a,0):a\in\ker P\} = \clsp\{(n,0):  n\in N_f(B)\} = \clsp N_f(\widetilde B).\qedhere
\]
\end{proof}

\begin{cor}\label{cor:diagonals are diagonals}
Let $A$ be a nonunital $C^*$-algebra and let $B$ be an abelian
$C^*$-subalgebra of $A$. Then $(A,B)$ is a diagonal pair in the
sense of Definition~\ref{dfn:diagonal} if and only if
$(\widetilde{A},\widetilde{B})$ is a diagonal pair in the sense
of \cite[Definition~1.1]{Kumjian1986}.
\end{cor}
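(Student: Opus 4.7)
The plan is to verify the defining conditions of each notion of diagonal pair against the other, using Lemmas~\ref{lem:expectation extends} and~\ref{lem-kerneloftildeP} as the primary tools. The forward implication follows almost immediately from these lemmas; the main work is in the reverse implication, where condition~(D1) on $B\subseteq A$ has to be extracted from the unital data on $\widetilde B\subseteq\widetilde A$.

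For the forward direction, suppose $(A,B)$ satisfies (D1)--(D3) of Definition~\ref{dfn:diagonal} with faithful conditional expectation $P\colon A\to B$. Lemma~\ref{lem:expectation extends} extends $P$ to a faithful conditional expectation $\widetilde P\colon\widetilde A\to\widetilde B$, and because $1_{\widetilde B}=1_{\widetilde A}$ by convention this yields the first requirement of \cite[Definition~1.1]{Kumjian1986}. The equality $\ker\widetilde P=\clsp N_f(\widetilde B)$ required by~(D3) is precisely Lemma~\ref{lem-kerneloftildeP}, so $(\widetilde A,\widetilde B)$ is a diagonal pair in Kumjian's sense.

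For the reverse direction, suppose $(\widetilde A,\widetilde B)$ is a diagonal pair per \cite[Definition~1.1]{Kumjian1986}, with faithful conditional expectation $\widetilde P$ satisfying $\ker\widetilde P=\clsp N_f(\widetilde B)$. I would begin by observing that if $(n,\lambda)\in N_f(\widetilde B)$ then $(n,\lambda)^2=0$ forces $\lambda=0$, so $N_f(\widetilde B)\subseteq\{(n,0):n\in N_f(B)\}\subseteq A$, and hence $\ker\widetilde P\subseteq A$. Because $A$ is a closed ideal in $\widetilde A$, decomposing $\widetilde P(a)=b+\mu 1$ for $a\in A$ gives $a-b-\mu 1\in\ker\widetilde P\subseteq A$ and hence $\mu=0$. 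Therefore $\widetilde P(A)\subseteq B$, so $P:=\widetilde P|_A\colon A\to B$ is a faithful conditional expectation, establishing~(D2).

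The main obstacle is then~(D1), which is automatic for $\widetilde B\subseteq\widetilde A$ but must be proved for $B\subseteq A$. My plan is to show that any approximate identity $(b_i)\subseteq B$ for $B$ is already an approximate identity for $A$. For $n\in N_f(\widetilde B)\subseteq A$, the first assertion of Lemma~\ref{lem:exp kills normalisers} applied in the unital setting $(\widetilde A,\widetilde B)$ (with $1_{\widetilde A}$ as a trivial approximate identity) yields $n^*n\in\widetilde B\cap A=B$; the standard estimate
\[
\|nb_i-n\|^2=\|b_in^*nb_i-b_in^*n-n^*nb_i+n^*n\|
\]
then tends to $0$, so $nb_i\to n$. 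Writing $a=\widetilde P(a)+(a-\widetilde P(a))$ with $\widetilde P(a)\in B$ and $a-\widetilde P(a)\in\ker\widetilde P=\clsp N_f(\widetilde B)$, linearity and continuity give $ab_i\to a$, and symmetrically $b_ia\to a$, establishing~(D1). With~(D1) in hand, Lemma~\ref{lem:exp kills normalisers} now gives $\clsp N_f(B)\subseteq\ker P$, while the reverse inclusion follows from $\ker P=\ker\widetilde P=\clsp N_f(\widetilde B)\subseteq\clsp N_f(B)$ by the opening observation, completing the verification of~(D3).
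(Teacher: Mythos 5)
Your argument is correct and takes essentially the same route as the paper's proof: the forward direction via Lemmas~\ref{lem:expectation extends} and~\ref{lem-kerneloftildeP}, and the reverse direction by restricting the given expectation, noting that free normalisers of $\widetilde B$ are of the form $(n,0)$ with $n\in N_f(B)$, using $n^*n\in B$ to upgrade an approximate identity for $B$ to one for $A$, and finishing (D3) with Lemma~\ref{lem:exp kills normalisers}. The only difference is that you explicitly justify why the expectation on $\widetilde A$ maps $A$ into $B$ (via $\ker\widetilde P\subseteq A$ and the nonunitality of $A$), a step the paper asserts without comment.
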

\begin{proof}
First suppose that $(A,B)$ is diagonal with conditional
expectation $P : A \to B$. We have $1_{\widetilde A}\in
\widetilde B$ by definition of the inclusion of $\widetilde{B}$
in $\widetilde{A}$. Lemma~\ref{lem:expectation extends} implies
that $\widetilde{P} :\widetilde{A} \to \widetilde{B}$ is
faithful.  Moreover, by Lemma~\ref{lem-kerneloftildeP} we have
$\ker \widetilde{P} = \clsp N_f(\widetilde B)$. Thus
$(\widetilde A, \widetilde B)$ is a diagonal pair  in the sense
of \cite[Definition~1.1]{Kumjian1986}.

Conversely, suppose $(\widetilde{A}, \widetilde{B})$ is a
diagonal pair, in the sense of
\cite[Definition~1.1]{Kumjian1986}, with conditional expectation
$Q:\widetilde A\to\widetilde B$. Since $Q$ is faithful, $P :=
Q|_A$ is also a faithful conditional expectation, and $Q =
\widetilde{P}$.

As in the proof of Lemma~\ref{lem-kerneloftildeP} if $(n, \lambda) \in
N_f(\widetilde B)$, then $\lambda = 0$ and $n \in N_f(B)$. So
\[
    N'_f(B) := \{ n \in A : (n, 0) \in  N_f(\widetilde B) \} \subset N_f(B).
\]
By assumption $\ker \widetilde{P} = \clsp N_f(\widetilde B)$.
By definition of $\widetilde{P}$, we have $\ker \widetilde{P} =
\{ (a, 0) : a \in \ker P \}$. Hence $\ker P = \clsp
N'_f(B)\subset\clsp N_f(B)$.

Fix an approximate identity $(b_i)_{i \in I}$ for $B$; we claim
it is also an approximate identity for $A$. Since $A = B + \ker
P$ and $\ker P = \clsp N'_f(B)$, it suffices to show that $nb_i
\to n$ for each $n \in N'_f(B)$. Fix $n \in N'_f(B)$. Since
$(n, 0) \in N_f(\widetilde B)$, we have $(n^*, 0)(0, 1)(n, 0) =
(n^*n, 0) \in \widetilde B$, so $n^*n \in B$. Since $n^*nb_i
\to n^*n$, it follows that $nb_i \to n$ also, so $(b_i)_{i \in
I}$ is an approximate identity for $A$.

Lemma~\ref{lem:exp kills
normalisers} now gives  $\clsp
N_f(B)\subset \ker P$, and hence $\ker P=\clsp
N_f(B)$.
\end{proof}

Corollary~\ref{cor:diagonals are diagonals} above ensures, in
particular, that we may apply the results of~\cite{Kumjian1986}
to our diagonal pairs, and we shall do so without further
comment.

\subsection{Diagonals in Fell algebras and the extension
property}

Building on the seminal work of Kadsion and Singer
\cite{Kadison-Singer-1959}, Anderson defined the extension
property for a pair of unital $C^*$-algebras
\cite[Definition~3.3]{Anderson-1979}  as follows. Let $A$ be a
unital $C^*$-algebra and $B$ a $C^*$-subalgebra with $1_A \in
B$. Then $B$ is said to have the extension property relative to
$A$ if each pure state of $B$ has a unique extension to a pure
state of $A$ (equivalently, each pure state of $B$ has a unique
extension to a state of $A$ --- this extension is then
necessarily pure). If $B$ is abelian and has the extension
property relative to $A$ then $B$ must be maximal abelian by the Stone-Weierstass Theorem \cite[p.~311]{Anderson-1979}. The
converse is false: for example, Cuntz has shown that the
canonical maximal abelian subalgebra of $\mathcal{O}_n$ does not
have the extension property \cite[Proposition 3.1]{Cuntz1980}; the next example shows this can happen even in a Fell
algebra.

\begin{example}\label{ex-counter}
Let $B=C([-1,1])$ and let $G=\{0,1\}$ act on $[-1,1]$ by $g
\cdot x = (-1)^g x$. Then the crossed product $A=B\rtimes G$ is
generated by $B$ and a self-adjoint unitary $U$ which does not
commute with $B$. That $A$ is a Fell algebra follows from, for
example, \cite[Lemma~5.10]{aHintegrable}. Moreover, $B$ is a
maximal abelian subalgebra of $A$. By
\cite[Theorem~5.3]{dana-ccr} the spectrum of $A$ is homeomorphic
to $\{\pi_{-1},\pi_1\}\cup(0,1]$ where $t_n\to\pi_1,\pi_{-1}$
for $t_n\in (0,1]$ if and only if $t_n\to 0$ in $\RR$. In
particular, $\pi_{-1}$ and $\pi_{1}$ cannot be separated by
disjoint open sets. The $\pi_i$ are one-dimensional
representations determined by $\pi_j(f) = f(0)$ for $f \in B$
and $\pi_j(U) = j$. Hence $\pi_1, \pi_{-1}$ are distinct pure
states of $A$ which restrict to evaluation at 0 on $B$. Thus $B$
is a maximal abelian subalgebra but does not have the extension
property.
\end{example}

Let $A$ be a unital $C^*$-algebra, and let be $B$ be a maximal
abelian subalgebra of $A$. Then $B$ has the extension property
relative to $A$ if and only if there exists a conditional
expectation $P:A\to B$ such that for each pure state $h$ of $B$,
the state $h\circ P$ is its unique pure state extension to $A$
\cite[Theorem~3.4]{Anderson-1979}. By \cite[Theorem~2.4]{ABG}
$B$, whether or not it is maximal abelian, has the extension
property relative to $A$ if and only if $A = B + \clsp[B, A]$
where $[B, A] = \{ ba - ab : a \in A,\ b \in B \}$. The
techniques used in the proof imply that the extension property
is equivalent to the requirement that $B + \lsp[B, A]$ be dense
in $A$ (if $f$ is a state on $A$ which restricts to a pure state
on $B$, then $f(ab) = f(a)f(b) = f(ba)$ for all $a \in A$, $b
\in B$ and hence $f$ vanishes on $\lsp[B, A]$).

We use the following definition of the extension property for
non-unital $C^*$-algebras.

\begin{defn}\label{dfn:ext prop} %
Let $B$ be a $C^*$-subalgebra of a $C^*$-algebra $A$. As in
\cite[\S2]{Kumjian1985}, we say that \emph{$B$ has the
extension property relative to $A$} if
\begin{enumerate}
\item $B$ contains an approximate identity for $A$; and
\item every pure state of $B$ extends uniquely to a pure
    state of $A$.
\end{enumerate}
\end{defn}

By \cite[Proposition~1.4]{Kumjian1986}, if $(A, B)$ is a
diagonal pair, then $B$ has the extension property relative to
$A$.

\begin{remark}\label{rmk:nonunital}
\begin{enumerate}
\item The extension property as presented in
    \cite[Definition~2.5]{ABG} seems slightly different to
    Definition~\ref{dfn:ext prop}: in the former $B$ is
    said to have the extension property relative to $A$ if
    pure states of $B$ extend uniquely to pure states of
    $A$ and no pure state of $A$ annihilates $B$. As noted
    in \cite[\S 2]{Wassermann2008} these two definitions
    are equivalent: it follows from \cite[Lemma
    2.32]{Akemann-Shultz} that $B$  contains an approximate
    identity for $A$ if and only if no pure state of $A$
    annihilates $B$.

\item Let $B$ be an abelian $C^*$-subalgebra of a nonunital
    $C^*$-algebra $A$. By \cite[Remark~2.6(iii)]{ABG} $B$
    has the extension property relative to $A$ if and only
    if $\widetilde{B}$ has the extension property relative
    to $\widetilde{A}$ (and $B$ is maximal abelian in $A$
    if and only if $\widetilde{B}$ is maximal abelian in
    $\widetilde{A}$). Moreover,  as in the unital case, $B$
    has the extension property relative to $A$ if and only
    if $B + \lsp[B, A]$ is dense in $A$.
\end{enumerate}
\end{remark}

\begin{notation+h}\label{rmk:def-psi}
Let $B$ be an abelian $C^*$-subalgebra of a $C^*$-algebra $A$,
and suppose that $B$ has the extension property relative to $A$.
By the discussion above, $B$ is maximal abelian and there exists
a unique conditional expectation $P:A \to B$. Moreover, for each
pure state $h$ of $B$, the state  $h\circ P$ is its unique pure
state extension to $A$. For this reason, we say that the
extension property is \emph{implemented by P}. The map $x
\mapsto x\circ P$ is a weak*-continuous map from the set of pure
states of $B$ (which may be identified with $\widehat{B}$) to
the pure states of $A$.

Of course $x \circ P$  determines a GNS triple $(\pi_x,
\Hh_x, \xi_x)$. That is, $\pi_{x}$ is an irreducible
representation of $A$ on the Hilbert space $\Hh_x$, the unit
vector $\xi_x$ is cyclic vector for $\pi_x$, and $x\circ P(a) =
(\pi_{x}(a)\xi_x\,|\, \xi_x)$ for all $a \in A$. Let $\psi =
\psi_P : \widehat{B} \to \widehat{A}$ be the map which takes $x
\in \widehat{B}$ to the unitary equivalence class $[\pi_{x}] \in
\widehat{A}$.  We call $\psi$ the \emph{spectral map} associated
to the inclusion $B \subset A$.

Since diagonal pairs have the extension property, it follows
from the above that if $(A,B)$ is a diagonal pair, then the
conditional expectation from $B$ to $A$ is unique. We use this
frequently: given a diagonal pair $(A,B)$, we will
without comment refer to \emph{the} expectation $P:B\to A$ and
use that the extension property is implemented by $x
\mapsto x \circ P$.
\end{notation+h}

There is some overlap with Lemma~\ref{lem:D-points} and \cite[Proposition~2.10]{A}.

\begin{lemma}\label{lem:D-points}
Suppose that $A$ is a separable $C^*$-algebra, let $B$ be an
abelian $C^*$-subalgebra with the extension property relative
to $A$ implemented by $P:A\to B$, and let $\psi : \widehat{B}
\to \widehat{A}$ be the spectral map. Suppose that $\pi$ is an
irreducible representation of $A$ such that
$\pi(A)=\Kk(H_\pi)$. Then $\psi^{-1}(\{[\pi]\})$ is a discrete
countable subset of $\widehat{B}$, and there exist a listing
$\{x_\lambda : \lambda \in \Lambda\}$ of $\psi^{-1}(\{[\pi]\})$
and a basis $\{\xi_\lambda : \lambda \in \Lambda\}$ of $H_\pi$
such that $x_\lambda\circ P =
(\pi(\cdot)\xi_\lambda\,|\,\xi_\lambda)$ for all $\lambda \in
\Lambda$ and
\begin{equation}\label{eq:basis decomp}
\pi(b) = \sum_{\lambda \in \Lambda} x_\lambda(b) \Theta_{\xi_\lambda,\xi_\lambda}
     \quad\text{ for all $b \in B$.}
\end{equation}
Furthermore, if $A$ is liminary, then $\psi$ is surjective and
$P: A \to B$ is faithful.
\end{lemma}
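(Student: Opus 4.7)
My plan is to analyse $\pi(B)$ as a commutative $C^*$-subalgebra of $\pi(A)=\Kk(\Hh_\pi)$. Such an algebra is isomorphic to $c_0(\Omega)$ for some discrete index set $\Omega$, with $\Hh_\pi$ decomposing orthogonally as $\bigoplus_{\omega\in\Omega}\Hh_\omega$ into common eigenspaces on which $\pi(B)$ acts through the character $\omega\in\widehat{\pi(B)}=\Omega$. Each $\Hh_\omega$ is finite-dimensional, for the scalar operator $\omega(\pi(b))\,\mathrm{id}_{\Hh_\omega}$ must be compact, and $\Omega$ is countable because $A$ (hence $\Hh_\pi$) is separable. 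Because $B$ contains an approximate identity for $A$, the net $\pi(b_i)$ converges strongly to $I_{\Hh_\pi}$, so there is no zero-eigenspace; and $\Omega\ne\emptyset$ since $\pi\ne 0$ forces $\pi(B)\ne 0$.

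The decisive step, and the one that really uses the extension property, is to show $\dim\Hh_\omega=1$ for every $\omega$. If $\dim\Hh_\omega\geq 2$, pick orthonormal $\xi,\eta\in\Hh_\omega$; then $\omega_\xi$ and $\omega_\eta$ are pure states of $A$ distinguished by any lift in $A$ of $\Theta_{\xi,\xi}\in\pi(A)$, while their restrictions to $B$ both equal the character $b\mapsto\omega(\pi(b))$, contradicting the extension property. Once $\dim\Hh_\omega=1$, fix a unit $\xi_\omega\in\Hh_\omega$ and set $x_\omega(b):=(\pi(b)\xi_\omega\,|\,\xi_\omega)\in\widehat{B}$. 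The vector state $\omega_{\xi_\omega}$ is a pure extension of $x_\omega$, so uniqueness forces $\omega_{\xi_\omega}=x_\omega\circ P$, whence $\psi(x_\omega)=[\pi]$. Conversely, any $x\in\psi^{-1}(\{[\pi]\})$ satisfies $x\circ P=\omega_\xi$ for some unit $\xi\in\Hh_\pi$ (pulled from the GNS cyclic vector across an equivalence to $\pi$), and $x(b)=(\pi(b)\xi|\xi)$ then forces $x$ to annihilate $\ker(\pi|_B)$, descending to a character of $\pi(B)\cong c_0(\Omega)$ and so equalling $x_\omega$ for a unique $\omega$. Thus $\psi^{-1}(\{[\pi]\})=\{x_\omega:\omega\in\Omega\}$ is countable and $\xi_x=\xi_\omega$ up to phase; the formula $\pi(b)=\sum_\omega x_\omega(b)\Theta_{\xi_\omega,\xi_\omega}$ follows from $\pi(b)|_{\Hh_\omega}=x_\omega(b)\,\mathrm{id}_{\Hh_\omega}$ together with $\Hh_\omega=\CC\xi_\omega$.

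For discreteness of $\psi^{-1}(\{[\pi]\})$, lift the minimal projection $e_{\omega_0}\in\pi(B)\cong c_0(\Omega)$, which equals $\Theta_{\xi_{\omega_0},\xi_{\omega_0}}$, to some $b_{\omega_0}\in B$; then $\{x\in\widehat{B}:x(b_{\omega_0})>1/2\}$ is an open neighbourhood of $x_{\omega_0}$ whose intersection with $\psi^{-1}(\{[\pi]\})$ is $\{x_{\omega_0}\}$. For the liminary addendum, every irreducible $\pi$ then satisfies $\pi(A)=\Kk(\Hh_\pi)$, and the argument above produces a non-empty $\psi^{-1}(\{[\pi]\})$, so $\psi$ is surjective. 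To see that $P$ is faithful, suppose $P(a^*a)=0$; then for each irreducible $\pi$ and each $\lambda\in\Lambda$ we have $\|\pi(a)\xi_\lambda\|^2=(x_\lambda\circ P)(a^*a)=0$, so $\pi(a)$ vanishes on the orthonormal basis $\{\xi_\lambda\}$ of $\Hh_\pi$ and hence $\pi(a)=0$; since liminary $C^*$-algebras are type~I and so possess enough irreducible representations to separate points, $a=0$. The main obstacle is the one-dimensionality of the eigenspaces, since it is precisely there that the extension property, rather than mere maximal abelianness, does the decisive work.
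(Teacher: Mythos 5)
Your proof is correct, and it differs from the paper's at the two pivotal steps, in an arguably more self-contained way. The paper first cites Archbold--Bunce--Gregson (its reference [ABG, Corollary~3.2]) to see that $\pi(B)$ is maximal abelian in $\pi(A)=\Kk(H_\pi)$, and then uses the spectral-theoretic fact (its footnote) that a maximal abelian subalgebra of the compacts is spanned by mutually orthogonal minimal projections, so the joint eigenspaces are one-dimensional from the outset. You instead decompose $H_\pi$ into finite-dimensional joint eigenspaces of the (not assumed maximal) abelian algebra $\pi(B)$ --- a standard consequence of the fact that a non-degenerate representation by compact operators is a direct sum of irreducibles, and non-degeneracy is exactly where your appeal to the approximate identity in $B$ is needed --- and then invoke the extension property directly: two orthonormal vectors in one eigenspace would yield two distinct pure vector states of $A$ (separated by a lift of $\Theta_{\xi,\xi}$) restricting to the same character of $B$. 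This trades the ABG citation for the asserted structure theorem that a commutative algebra of compacts acting non-degenerately is $c_0(\Omega)$ with finite-dimensional eigenspaces; you should cite or sketch that fact, since it plays the role of the paper's footnote. Your converse inclusion is also different and cleaner: the paper expands $x(b)=\sum_\lambda x_\lambda(b)|(\xi\,|\,\xi_\lambda)|^2$ and uses multiplicativity of $x$ to kill all but one term, whereas you note that $x$ annihilates $\ker(\pi|_B)$ and therefore factors through a character of $\pi(B)\cong c_0(\Omega)$, hence is a point evaluation $x_\omega$. The discreteness argument (lifting a minimal projection of $\pi(B)$) and the liminary addendum essentially coincide with the paper's. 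Two cosmetic points: choose the lift $b_{\omega_0}$ self-adjoint, or replace the condition $x(b_{\omega_0})>1/2$ by $|x(b_{\omega_0})|>1/2$, so that your neighbourhood is well defined; and every $C^*$-algebra, not only Type~I ones, has enough irreducible representations to separate points, so the Type~I remark at the end is unnecessary.
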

\begin{proof}
We begin by identifying a basis $\{\xi_\lambda : \lambda \in
\Lambda\}$ for $H_\pi$ and points $\{x_\lambda : \lambda \in
\Lambda\}$ in $\widehat{B}$ satisfying~\eqref{eq:basis decomp}.
We then show that the $x_\lambda$ form a discrete set which
coincides with $\psi^{-1}(\{[\pi]\})$.

We have $\pi(B)$ maximal abelian in $\pi(A)$ by
\cite[Corollary~3.2]{ABG}. Since $\pi(A)=\Kk(H_\pi)$, we have
$B/\ker\pi\cong \pi(B) =
\clsp\{\Theta_{\xi_\lambda,\xi_\lambda} : \lambda \in
\Lambda\}$ for some orthonormal basis $\{\xi_\lambda : \lambda \in
\Lambda\}$ of $\Hh_\pi$
\footnote{This standard fact follows from the Spectral Theorem. Specifically, the Spectral Theorem implies firstly that the $C^*$-algebra generated by each self-adjoint $T \in \Kk(H)$ is equal to the $C^*$-algebra generated by its spectral projections. So any abelian $C^*$-subalgebra $D$ of $\Kk(H)$ is spanned by commuting finite-dimensional projections. A minimal subprojection of any of these spanning projections then commutes with $D$. So if $D$ is maximal abelian, then it is spanned by a maximal family of mutually orthogonal minimal projections on $H$.}%
; and $\Lambda$ is countable because $A$
is separable. Since each one-dimensional subspace $\lsp\{\xi_\lambda\}$ is
invariant under $\pi(B)$, it determines an irreducible
representation of $B$ given by point evaluation at
$x_\lambda \in \widehat{B}$. The set $\{x_\lambda : \lambda \in
\Lambda\}$ is discrete because for each $\lambda$ there exists
$b_\lambda$ such that $\pi(b_\lambda) =
\Theta_{\xi_\lambda,\xi_\lambda}$ which forces
$x_\mu(b_\lambda) = 0$ for $\lambda \not= \mu$. The formula~\eqref{eq:basis decomp}
follows from the definition of the $x_\lambda$.

Fix $\lambda \in \Lambda$. Then for $b \in B$,
\[
(\pi(b)\xi_\lambda\,|\,\xi_\lambda)=
\Big(\sum_{\mu \in \Lambda} x_\mu(b)\Theta_{\xi_\mu,\xi_\mu}(\xi_\lambda)\,|\,\xi_\lambda\Big)
=x_\lambda(b)=x_\lambda\circ P(b).
\]
Hence $x_\lambda\circ P=
(\pi(\cdot)\xi_\lambda\,|\,\xi_\lambda)$ for all $\lambda \in
\Lambda$ by the extension property. Thus
$\psi(x_\lambda) = [\pi]$, and it follows that $\{x_\lambda :
\lambda \in \Lambda\}\subset\psi^{-1}(\{[\pi]\})$.

For the other inclusion, let $x\in \psi^{-1}(\{[\pi]\})$. Since the GNS representation associated to $x\circ P$ is equivalent to $\pi$, we may assume that $x\circ P(\cdot)=(\pi(\cdot)\xi\,|\,\xi)$ for some unit vector $\xi\in H_\pi$. Using~\eqref{eq:basis decomp} we get
\[
x(b)=\sum_{\lambda\in\Lambda} x_\lambda(b)\, |(\xi\,|\, \xi_\lambda)|^2\quad\text{for all $b\in B$.}
\]
Suppose that there exist $\lambda_i$ such that $(\xi\,|\, \xi_{\lambda_i})\neq 0$ for $i=1,2$.  Since $\{x_\lambda : \lambda \in
\Lambda\}$ is discrete we can find $b_i\in B$ such that $x_\lambda(b_i)=0$ unless $\lambda=\lambda_i$.  Now $x(b_1b_2)=0$ but
\[x(b_1)x(b_2)=x_{\lambda_1}(b_1)|(\xi\,|\, \xi_{\lambda_1})|^2x_{\lambda_2}(b_2)|(\xi\,|\, \xi_{\lambda_2})|^2\neq 0
\]
which is impossible.  It follows that there is precisely one $\lambda$ such that $(\xi\,|\, \xi_{\lambda})\neq 0$, and hence that $x=x_\lambda$. Thus $\{x_\lambda : \lambda \in \Lambda\}=\psi^{-1}(\{[\pi]\})$.

Now suppose that $A$ is liminary and let $\pi$ be an irreducible
representation of $A$.  Then $\pi(A)=\Kk(H_\pi)$, so the above
argument shows that $\psi^{-1}(\{[\pi]\})$ is nonempty.
Therefore, $\psi$ is surjective. It remains to prove that $P$ is
faithful.  Fix $a \in A^+ \setminus \{0\}$. There is an
irreducible representation $\pi$ on a Hilbert space $\Hh_\pi$
with $\pi(a) \ne 0$.   Then with $\{\xi_\lambda : \lambda \in
\Lambda\}$ and $\psi^{-1}(\{[\pi]\}) = \{x_\lambda : \lambda \in
\Lambda\}$ as in the statement of the lemma we have
\[
\pi(P(a)) =  \sum_{\lambda \in \Lambda} x_\lambda(P(a)) \Theta_{\xi_\lambda,\xi_\lambda}
=  \sum_{\lambda \in \Lambda} (\pi(a) \xi_\lambda\,|\, \xi_\lambda)
                                \Theta_{\xi_\lambda,\xi_\lambda} \ne 0.
\]
Hence $P(a) \ne 0$ and $P$ is faithful.
\end{proof}

\begin{lemma}\label{lem-claim1}
Let $A$ be a separable liminary $C^*$-algebra and $B$ an
abelian $C^*$-subalgebra with the  extension property relative
to $A$, and let $\psi$ be the spectral map. Let $U$ be an open
subset of $\widehat B$ and let $J = \{b\in B: y(b)=0\text{\ for
all\ }y\in\widehat B\setminus U\} \lhd B$. Let $I =
\overline{AJA}$ be the ideal of $A$ generated by $J$. Then
\begin{equation}\label{eq-open}
\widehat I=\{[\pi]\in\widehat A:\pi|_J\neq 0\}=\psi(U).
\end{equation}
\end{lemma}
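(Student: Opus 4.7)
The plan is to prove the two equalities in \eqref{eq-open} in turn. For the first, I invoke the standard identification of $\widehat I$ with the open subset $\{[\pi]\in\widehat A:\pi|_I\neq 0\}$ of $\widehat A$ given by restricting representations from $A$ to $I$. Since $I = \overline{AJA}$ and any irreducible (hence nondegenerate) representation $\pi$ satisfies $\pi(\overline{AJA}) = \overline{\pi(A)\pi(J)\pi(A)}$, the condition $\pi|_I = 0$ is equivalent to $\pi|_J = 0$. Combining these observations gives $\widehat I = \{[\pi]\in\widehat A:\pi|_J\neq 0\}$.

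For the second equality, fix an irreducible representation $\pi$ of $A$. Since $A$ is liminary, $\pi(A) = \Kk(H_\pi)$, so Lemma~\ref{lem:D-points} produces an enumeration $\psi^{-1}(\{[\pi]\}) = \{x_\lambda : \lambda \in \Lambda\}$ and an orthonormal basis $\{\xi_\lambda : \lambda \in \Lambda\}$ of $H_\pi$ satisfying
\[
\pi(b) = \sum_{\lambda \in \Lambda} x_\lambda(b) \Theta_{\xi_\lambda, \xi_\lambda} \quad\text{for all } b \in B.
\]
Under the identification $B \cong C_0(\widehat B)$ the ideal $J$ is $C_0(U)$. By the displayed formula, $\pi|_J \neq 0$ if and only if $x_\lambda(b) \neq 0$ for some $\lambda \in \Lambda$ and some $b \in J$. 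One direction of this is immediate, and the other follows from the fact that for each $x \in U$ Urysohn's lemma in the LCH space $\widehat B$ supplies some $b \in C_0(U) = J$ with $b(x) \neq 0$. Thus $\pi|_J \neq 0$ if and only if some $x_\lambda$ lies in $U$.

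To conclude, observe that if some $x_\lambda \in U$, then $[\pi] = \psi(x_\lambda) \in \psi(U)$. Conversely, if $[\pi] = \psi(x)$ for some $x \in U$, then $x \in \psi^{-1}(\{[\pi]\}) = \{x_\mu : \mu\in\Lambda\}$, so $x = x_\lambda \in U$ for some $\lambda$. Combining this dichotomy with the previous paragraph yields $\{[\pi]\in\widehat A:\pi|_J\neq 0\} = \psi(U)$, which completes the proof.

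I do not anticipate any serious obstacle. The bulk of the content has been isolated in Lemma~\ref{lem:D-points}, which, together with liminarity, reduces the analysis of $\pi|_J$ to a question about which of the discrete set of points $\psi^{-1}(\{[\pi]\})$ fall inside the open set $U$. The only bookkeeping is checking that the decomposition~\eqref{eq:basis decomp} indeed characterises nonvanishing of $\pi$ on $J$, which is immediate since the $\Theta_{\xi_\lambda,\xi_\lambda}$ are mutually orthogonal rank-one projections.
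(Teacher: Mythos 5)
Your proof is correct and takes essentially the same approach as the paper: the first equality follows from the standard description of $\widehat I$ inside $\widehat A$, and the second is driven by the decomposition $\pi(b)=\sum_{\lambda} x_\lambda(b)\Theta_{\xi_\lambda,\xi_\lambda}$ supplied by Lemma~\ref{lem:D-points} together with the identification $J\cong C_0(U)$. The only cosmetic difference is that the paper proves the inclusion $\psi(U)\subseteq\{[\pi]\in\widehat A:\pi|_J\neq 0\}$ directly from the conditional expectation and a GNS vector, whereas you deduce both inclusions from the same basis decomposition; both arguments are valid.
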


\begin{proof}
Since $I$ is generated by $J$, we have
\[
I   =\bigcap\{\ker\pi:[\pi]\in\widehat A, I\subseteq\ker\pi\}
    =\bigcap\{\ker\pi:[\pi]\in\widehat A, J\subseteq\ker\pi\},
\]
which gives $\widehat I=\{[\pi]\in\widehat A:\pi|_J\neq 0\}$.

To prove that $\psi(U) \subset \{[\pi]\in\widehat A:\pi|_J\neq
0\}$, let $P$ be the unique conditional expectation from $A$ to
$B$. Fix $x \in U$,
and let $\pi\in\psi(x)$. Since $x \in U=\widehat J$, there is
an element $b \in J$ such that $x(b) \neq 0$. Since $x\circ P$ is a pure state associated with $\pi$ there is a unit
vector $\xi\in\Hh_\pi$ such that $x\circ P(a) =
(\pi(a)\xi\,|\,\xi)$ for all $a \in A$.    But $x\circ P(b)=
x(b)\neq 0$, so $\pi(b)\neq 0$.  Hence, $\psi(U) \subset
\{[\pi]\in\widehat A:\pi|_J\neq 0\}$.

To see that $\{[\pi]\in\widehat A:\pi|_J\neq 0\}\subset
\psi(U)$, fix an irreducible representation $\pi$ of $A$ with
$\pi(f)\neq 0$ for some $f\in J$.  Since $A$ is liminary,
$\pi(A)=\Kk(H_\pi)$ so Lemma~\ref{lem:D-points} implies that
$\psi^{-1}([\pi])$ is a countable discrete set $\{x_\lambda :
\lambda\in \Lambda\} \subset \widehat{B}$, and there is a basis
$\{\xi_\lambda : \lambda \in \Lambda\}$ for $\Hh_\pi$ such that
$\pi(b)=\sum_{\lambda \in \Lambda}
x_\lambda(b)\Theta_{\xi_\lambda,\xi_\lambda}$ for all $b \in
B$. Since $\pi(f)\neq 0$ and $\widehat J=U$, there exists
$\lambda \in \Lambda$ such that $x_\lambda\in U$ and
$f(x_\lambda)\neq 0$. Thus $[\pi]=\psi(x_\lambda)\in \psi(U)$.
Hence $\{[\pi]\in\widehat A:\pi|_J\neq 0\}= \psi(U)$.
\end{proof}

The following lemma is used implicitly in the proof of
\cite[Theorem~3.1]{Kumjian1986}.

\begin{lemma}\label{lem:normaliser fairy}
Let $A$ be a separable liminary $C^*$-algebra and $B$ an
abelian $C^*$-subalgebra with the  extension property relative
to $A$. Let $\psi$ be the spectral map.
\begin{enumerate}
\item Suppose that $f,g \in B^+$ have the property that the
    restriction of $\psi$ to $\supp f\cup\supp g$ is
injective. Then $gAf \subset B$.
\item If $f,g \in B^+$ have the property that the
    restrictions of $\psi$ to $\supp f$ and $\supp g$ are
    injective, then $gAf \subset N(B)$.
\end{enumerate}
\end{lemma}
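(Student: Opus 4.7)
The plan is to prove part~(1) by comparing $gaf$ with the natural candidate $gP(a)f$, which lies in $B$, and checking equality in every irreducible representation of $A$ via Lemma~\ref{lem:D-points}; then to deduce part~(2) by two applications of part~(1).

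For part~(1), fix $a \in A$ and set $b := gP(a)f$. Since $P$ is a $B$-bimodule map and $f, g \in B$, we have $b \in B$, so it suffices to show $gaf = b$. Because $A$ is liminary, irreducible representations separate points, and the task reduces to checking $\pi(gaf) = \pi(b)$ for every irreducible $\pi$. Fix such a $\pi$; as $\pi(A) = \Kk(\Hh_\pi)$, Lemma~\ref{lem:D-points} supplies a countable discrete set $\psi^{-1}([\pi]) = \{x_\lambda : \lambda \in \Lambda\}$ and an orthonormal basis $\{\xi_\lambda : \lambda \in \Lambda\}$ of $\Hh_\pi$ with $\pi(b') = \sum_\lambda x_\lambda(b')\Theta_{\xi_\lambda,\xi_\lambda}$ for $b' \in B$, and $x_\lambda \circ P(\cdot) = (\pi(\cdot)\xi_\lambda\,|\,\xi_\lambda)$. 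If $\pi(f) = 0$ or $\pi(g) = 0$ both sides vanish. Otherwise there exist indices $\lambda_f, \lambda_g$ with $x_{\lambda_f}(f), x_{\lambda_g}(g) \neq 0$; these $x$'s lie in $\supp f \cup \supp g$ and both map under $\psi$ to $[\pi]$, so injectivity forces $\lambda_f = \lambda_g =: \lambda$. The same injectivity rules out any other $\mu \neq \lambda$ with $x_\mu(f) \neq 0$ or $x_\mu(g) \neq 0$, so $\pi(f) = x_\lambda(f)\Theta_{\xi_\lambda,\xi_\lambda}$ and $\pi(g) = x_\lambda(g)\Theta_{\xi_\lambda,\xi_\lambda}$. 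A direct computation using $\Theta_{\xi_\lambda,\xi_\lambda}\, T\, \Theta_{\xi_\lambda,\xi_\lambda} = (T\xi_\lambda\,|\,\xi_\lambda)\Theta_{\xi_\lambda,\xi_\lambda}$ then shows that both $\pi(gaf)$ and $\pi(b)$ equal $x_\lambda(g)x_\lambda(f)(\pi(a)\xi_\lambda\,|\,\xi_\lambda)\Theta_{\xi_\lambda,\xi_\lambda}$.

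For part~(2), set $n := gaf$ and fix $b \in B$. Using $f, g \in B^+$ and the commutativity of $B$, rewrite $n^*bn = f(a^*(gbg)a)f$ and $nbn^* = g(a(fbf)a^*)g$, where $gbg, fbf \in B$ and the middle factors are in $A$. Part~(1) applied with $(f,g)$ replaced by $(f,f)$ yields $fAf \subset B$, so $n^*bn \in B$; the symmetric application with $(g,g)$ yields $gAg \subset B$, so $nbn^* \in B$. Hence $n \in N(B)$.

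The main obstacle is the representation-theoretic bookkeeping in part~(1): pinning down the unique index $\lambda$ common to the ``diagonal supports'' of $\pi(f)$ and $\pi(g)$, and matching the scalar $(\pi(a)\xi_\lambda\,|\,\xi_\lambda)$ extracted from $\pi(g)\pi(a)\pi(f)$ against the scalar $x_\lambda \circ P(a)$ arising from the diagonal expansion of $\pi(gP(a)f)$ in $B$. Once that identification is set up, part~(2) follows formally from two uses of part~(1).
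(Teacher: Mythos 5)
Your proposal is correct and follows essentially the same route as the paper: for (1) you compare $gaf$ with $gP(a)f$ in each irreducible representation, use Lemma~\ref{lem:D-points} to reduce $\pi(f)$ and $\pi(g)$ to scalar multiples of a single rank-one projection $\Theta_{\xi_\lambda,\xi_\lambda}$ singled out by the injectivity of $\psi$ on $\supp f\cup\supp g$, and match the scalars via $x_\lambda\circ P$; for (2) you apply (1) to the pairs $(f,f)$ and $(g,g)$ exactly as the paper does with $n^*bn=f(a^*gbga)f$ and $nbn^*=g(afbfa^*)g$. The only cosmetic quibble is that irreducible representations separate points in any $C^*$-algebra; liminarity is needed only so that $\pi(A)=\Kk(\Hh_\pi)$, which is how you in fact invoke it.
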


\begin{proof}
Let $P : A \to B$ be the unique conditional expectation.

(1) Fix $a \in A$ and an irreducible representation $\pi : A
\to \Bb(\Hh_\pi)$.  It suffices to show that
\begin{equation}\label{eq:P fixes fcf}
\pi(P(gaf)) = \pi(gaf).
\end{equation}
Since $P$ is an expectation with $f, g\in P(A)$, we have
$P(gaf) = g P(a) f$, so~\eqref{eq:P fixes fcf} is trivial if
$\pi(f) = 0$ or $\pi(g)=0$. So we suppose that
$\pi(f),\pi(g) \not= 0$ and we verify that $\pi(gaf) = \pi(
gP(a) f)$.

Since $A$ is liminary, we may use Lemma~\ref{lem:D-points} to
obtain a listing $\psi^{-1}(\{[\pi]\}) = \{x_\lambda : \lambda
\in \Lambda\}$ and a basis $\{\xi_\lambda : \lambda \in
\Lambda\}$ of $\Hh_\pi$ such that $x_\lambda\circ P =
(\pi(\cdot)\xi_\lambda\,|\,\xi_\lambda)$ for all $\lambda \in
\Lambda$ and $\pi(b) = \sum_{\lambda \in \Lambda}
x_\lambda(b)\Theta_{\xi_\lambda,\xi_\lambda}$ for all $b \in
B$. Since $\psi(x_\lambda) = [\pi]$ for all $\lambda$, and
since $\psi$ restricts to an injection on $\supp f\cup\supp g$
there exists a unique $\lambda \in \Lambda$ such that
$x_\lambda\in \supp f\cup\supp g$. Thus $\pi(f) = x_\lambda(f)
\Theta_{\xi_\lambda,\xi_\lambda}$ and $\pi(g) = x_\lambda(g)
\Theta_{\xi_\lambda,\xi_\lambda}$. Hence
\begin{align*}
\pi(gaf)
    &=  (x_\lambda(g) \Theta_{\xi_\lambda,\xi_\lambda}) \pi(a)
            (x_\lambda(f) \Theta_{\xi_\lambda,\xi_\lambda})
    =  x_\lambda(g) (\pi(a)\xi_\lambda\,|\,\xi_\lambda)
        x_\lambda(f)\Theta_{\xi_\lambda,\xi_\lambda} \\
    &=  x_\lambda(g) x_\lambda(P(a))x_\lambda(f) \Theta_{\xi_\lambda,\xi_\lambda}
    = \pi(gP(a)f).
\end{align*}
So $gaf=P(gaf)$ and hence $gAf \subset B$.

\smallskip

(2)  Fix $a \in A$ and set $n := gaf$.  Then for every $b \in
B$ we have $n^*bn = f(a^*gbga)f \in B$ by~(1). Thus, $n^*Bn
\subset B$ and symmetrically  $nBn^* \subset B$.  
Hence,  $n = gaf \in N(B)$.
\end{proof}

Our next result, Theorem~\ref{thm:ext prop => diag}, extends
\cite[Theorem 2.2]{Kumjian1985} from continuous-trace
$C^*$-algebras to Fell algebras;  indeed our proof follows
similar lines. There is also some overlap with
\cite[Proposition~3.3]{A} and \cite[Proposition~4.1]{BC}.
Example~\ref{ex:alex} below shows that Theorem~\ref{thm:ext
prop => diag} cannot be extended to bounded-trace
$C^*$-algebras.

\begin{thm}\label{thm:ext prop => diag}
Let $A$ be a separable Fell algebra and let $B$ be an abelian
$C^*$-subalgebra with the  extension property relative to $A$.
Then
\begin{enumerate}
\item The spectral map $\psi$  is a local homeomorphism,
    and
\item $(A,B)$ is a diagonal pair.
\end{enumerate}
\end{thm}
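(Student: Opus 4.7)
The plan is to prove~(1) by combining Fell's condition with
the extension-property expectation $P$ and an orthogonality
argument, and then to deduce~(2) by routine verification of
the diagonal axioms except~(D3), where~(1) and
Lemma~\ref{lem:normaliser fairy} do the main work.

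For~(1), continuity of $\psi$ is noted in
Notation~\ref{rmk:def-psi}, and $\psi$ is an open map by
Lemma~\ref{lem-claim1} (each open $U\subset\widehat{B}$ equals
$\widehat{J}$ for some $J\lhd B$, and $\psi(U)$ is the spectrum
of $\overline{AJA}$). So the heart of the matter is local
injectivity. Fix $x_0\in\widehat{B}$ with $\psi(x_0)=[\pi_0]$
and let $\xi_0\in\Hh_{\pi_0}$ be the GNS cyclic vector for
$x_0\circ P$. Fell's condition provides some $b_0\in A^+$
whose image is a rank-one projection on a neighbourhood of
$[\pi_0]$; using $\pi_0(A)=\Kk(\Hh_{\pi_0})$ to rotate the
range onto $\lsp\{\xi_0\}$, and then a standard continuous
functional calculus together with lower semicontinuity of
$[\pi]\mapsto\|\pi(\cdot)\|$, one obtains $b\in A^+$ and a
neighbourhood $V$ of $[\pi_0]$ with
$\pi_0(b)=\Theta_{\xi_0,\xi_0}$ and
$\pi(b)=\Theta_{\zeta_\pi,\zeta_\pi}$ a rank-one projection
for every $[\pi]\in V$. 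Setting $f:=P(b)\in B^+$, the formula
$x\circ P(\cdot)=(\pi_x(\cdot)\xi_x\,|\,\xi_x)$ from
Notation~\ref{rmk:def-psi} yields
$x(f)=|(\xi_x\,|\,\zeta_\pi)|^2$ whenever $\psi(x)=[\pi]\in V$,
so in particular $x_0(f)=1$. If $\psi$ failed to be locally
injective at $x_0$, there would be nets
$x^1_\alpha\neq x^2_\alpha$ in $\widehat{B}$ converging to
$x_0$ with $\psi(x^1_\alpha)=\psi(x^2_\alpha)$;
Lemma~\ref{lem:D-points} identifies their cyclic vectors with
distinct, hence orthogonal, basis vectors in a common
representation space, so Bessel's inequality forces
$x^1_\alpha(f)+x^2_\alpha(f)\leq 1$, contradicting the
convergence $x^i_\alpha(f)\to x_0(f)=1$.

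For~(2), axiom~(D1) is built into Definition~\ref{dfn:ext
prop}, and~(D2) follows from Notation~\ref{rmk:def-psi}
(existence and uniqueness of $P$) together with the
faithfulness clause of Lemma~\ref{lem:D-points}, since Fell
algebras are liminary by Theorem~\ref{thm-fell} and
Lemma~\ref{lem-elementary?}. The inclusion $\clsp
N_f(B)\subset\ker P$ in~(D3) is Lemma~\ref{lem:exp kills
normalisers}. For the reverse, part~(1) furnishes a locally
finite open cover $\{W_\alpha\}$ of $\widehat{B}$ on which
$\psi$ is injective; for $f,g\in B^+$ with $\supp f\subset
W_\alpha$, $\supp g\subset W_\beta$ and $fg=0$,
Lemma~\ref{lem:normaliser fairy}(2) gives $gAf\subset N(B)$
and the computation $(gaf)^2=ga(fg)af=0$ upgrades this to
$gAf\subset N_f(B)$. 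Given $a\in\ker P$ and a subordinate
partition of unity $\{\phi_\alpha\}\subset B^+$, the
approximate decomposition $a\approx\sum_{\alpha,\beta}
\phi_\alpha a\phi_\beta$ contributes free normalisers from
each pair with $\supp\phi_\alpha\cap\supp\phi_\beta=\emptyset$;
for the remaining overlapping pieces, a sufficiently fine
refinement of $\{W_\alpha\}$ (possible because $\psi$ is a
local homeomorphism) arranges that $\psi$ is jointly injective
on $\supp\phi_\alpha\cup\supp\phi_\beta$, so
Lemma~\ref{lem:normaliser fairy}(1) places $\phi_\alpha a
\phi_\beta\in B$, and then $a\in\ker P$ forces $\phi_\alpha a
\phi_\beta= P(\phi_\alpha a\phi_\beta)=\phi_\alpha P(a)
\phi_\beta=0$.

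The main obstacle is the bookkeeping in~(D3): one must
arrange a refinement of $\{W_\alpha\}$ from~(1) and a
compatible partition of unity so that every off-diagonal
product $\phi_\alpha\phi_\beta$ either vanishes outright
(yielding a genuine free normaliser) or else has its combined
support mapped injectively by $\psi$ (so that the $B$-valued
contribution is killed by $a\in\ker P$). The
local-homeomorphism property established in~(1) is precisely
what makes this combinatorial refinement feasible.
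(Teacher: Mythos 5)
Your argument is correct, and its interesting divergence from the paper is in the local-injectivity step of~(1). The paper passes to a Hausdorff neighbourhood $V$ of $[\pi_0]$, uses the corresponding continuous-trace ideal $I$, builds $f \in C_c(\widehat B)^+\subset J\subset I$ with $\pi(f)=\Theta_{\xi_\mu,\xi_\mu}$, and derives the contradiction $1=\lim\tr(\pi_n(f))\geq \lim\big(y_n(f)+z_n(f)\big)=2$ from continuity of the trace on the Pedersen ideal of $I$. You instead apply Fell's condition directly, rotate and functionally calibrate the Fell element so that $\pi_0(b)=\Theta_{\xi_0,\xi_0}$ while $\pi(b)$ stays a rank-one projection on a neighbourhood $V$, and then work with $f:=P(b)\in B$: for two distinct points of a fibre, Lemma~\ref{lem:D-points} gives orthogonal GNS vectors, so Bessel yields $x^1_\alpha(f)+x^2_\alpha(f)\le 1$ while weak$^*$ convergence forces the sum to tend to $2$. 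This is the same ``one versus two'' count, but it trades the continuous-trace-ideal and trace-continuity machinery for the conditional expectation plus Bessel's inequality, which is arguably more elementary and self-contained; the paper's route, by contrast, makes visible the local continuous-trace structure of Fell algebras. Your part~(2) is essentially the paper's proof: (D1) and (D2) are handled the same way (faithfulness of $P$ via Lemma~\ref{lem:D-points}), and the (D3) decomposition of $g_nag_n$ into disjoint-support free normalisers and overlapping terms that land in $B$ and vanish is the paper's computation~\eqref{eq-kernel}. Two small points to tighten: the refinement you invoke is not a matter of ``sufficient fineness'' alone --- what is needed is a locally finite refinement with the property that overlapping members have union on which $\psi$ is injective, which is a star-refinement statement relying on paracompactness of $\widehat B$ (second countability); this is exactly \cite[Lemma 2.1]{Kumjian1985}, which the paper cites. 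Also, the paper proves surjectivity of $\psi$ as part of~(1); you omit it, but it follows from the liminary case of Lemma~\ref{lem:D-points}, which you cite elsewhere, so this is only a presentational gap.
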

\begin{proof}
(1) We must prove that $\psi$ is continuous, open, surjective
and locally injective. Continuity follows from the observation
that $\phi \mapsto \phi \circ P$ is a weak$^*$-continuous map
from the state space of $B$ to that of $A$. That $\psi$ is an
open map follows from Lemma~\ref{lem-claim1} and the
surjectivity of $\psi$ follows from Lemma~\ref{lem:D-points}.

To show that $\psi$ is locally injective we argue as in
\cite[Theorem~2]{Kumjian1988}. Suppose that $\psi$ fails to be
locally injective at $x\in\widehat B$.   Then there exist
sequences $(y_n)^\infty_{n=1}$, $(z_n)^\infty_{n=1}$ in
$\widehat B$ such that $y_n, z_n \to x$ and, for all $n$,
$y_n\neq z_n$ and $\psi(y_n)=\psi(z_n)$.  Let $\pi\in\psi(x)$.
Since $A$ is a Fell algebra there exists a Hausdorff
neighbourhood $V$ of $[\pi]$ in $\widehat A$
\cite[Corollary~3.4]{AS}. Let $I$ be the ideal of $A$ such that
$\widehat I=V$. Since $V$ is Hausdorff, $I$ is a continuous
trace $C^*$-algebra. Let $U=\psi^{-1}(V)$ and let $J$ be the
ideal of $B$ such that $\widehat J=U$.  We have $J\subset I$ by
Lemma~\ref{lem-claim1}.

By Lemma~\ref{lem:D-points}, $\psi^{-1}(\{[\pi]\}) = \{x_\lambda
: \lambda \in \Lambda\}$ is discrete and countable, and there
exists a basis $\{\xi_\lambda : \lambda \in \Lambda\}$ of
$\Hh_\pi$ such that $\pi(b)=\sum_{\lambda \in \Lambda}
x_\lambda(b)\Theta_{\xi_\lambda,\xi_\lambda}$ for all $b \in B$.
Choose $b\in C_c(\widehat B)^+$ such that $x(b)>0$ and $\supp
b\subset U$. Since $\pi \in \psi(x)$, we have $x = x_\mu$ for
some $\mu \in \Lambda$, and since $\psi^{-1}(\{[\pi]\})$ is
discrete, we may choose $g\in C_c(\widehat B)^+$ such that
$x_\lambda(g)=0$ unless $\lambda = \mu$. So $\pi(bg)$ is a
positive multiple of $\Theta_{\xi_\mu,\xi_\mu}$, so $f :=
\frac{1}{x_\mu(bg)} bg \in C_c(\widehat B)^+$ satisfies
$\pi(f)=\Theta_{\xi_\mu,\xi_\mu}$ and $x(f)=1$.

We have $\supp(f) \subset \supp(b) \subset U$, so $f \in J
\subset I$. Since $f$ has compact support it belongs to the
Pedersen ideal of $I$ and hence is a continuous trace element
in $I$. For each $n$, fix $\pi_n\in \psi(y_n)$. Since
$\psi(y_n)\to \psi(x)$ we have
\[
\lim_{n \to \infty} \tr\big( \pi_n(f)\big)
    = \tr \big(\pi(f)\big)
    = \tr  \Theta_{\xi_\mu, \xi_\mu}
    = 1.
\]
But $\{y_n,z_n\}\in\psi^{-1}(\{\psi(y_n)\})$, so by Lemma
\ref{lem:D-points} and the positivity of $f$ we also have
\[
    \tr\big(\pi_n(f)\big)\geq y_n(f) + z_n(f)\to 2x(f)=2,
\]
which results in a contradiction.
Thus $\psi$ is  
a local homeomorphism.

(2)  Since $\psi : \widehat{B} \to \widehat{A}$ is a local
homeomorphism by (1), the collection
\[
\Uu(\psi) := \lbrace U \subset \widehat{B} \text{ open}
                     :\psi|_U  \text{ is injective }\rbrace
\]
forms an open cover of $\widehat{B}$.  Since $B$ is separable,
$\widehat{B}$ is second-countable and hence paracompact. It
follows by \cite[Lemma 2.1]{Kumjian1985} (see also the Shrinking
Lemma \cite[Lemma 4.32]{tfb}) that there is a countable, locally
finite refinement
$\Vv := \lbrace  V_n : n \ge 0\}$ of $\Uu$ such that $V_i \cap V_j \ne \emptyset$ implies $V_i \cup V_j \in \Uu(\psi)$.

By definition of the extension property, $B$ contains an
approximate identity for $A$. Since $A$ is liminary, we may
apply Lemma~\ref{lem:D-points} to conclude that $P$ is
faithful.   By Lemma~\ref{lem:exp kills normalisers} we have
$N_f(B)\subset\ker P$, so it remains to show that every element
in $\ker(P)$ may be approximated by sums of elements in
$N_f(B)$.

By \cite[Lemma 4.34]{tfb} there exists a partition of unity
subordinate to $\Vv$; that is, there exists a sequence
$(f_n)^\infty_{n=0}$ in $B$ such that $\supp f_n \subset V_n$,
$0 \le f_n \le 1$ and $\sum_{n=0}^\infty x(f_n) = 1$ for all $x
\in \widehat{B}$. For each $n \ge 0$, let $g_n = \sum_{j=0}^n
f_j$. For a compact subset $K$ of $\widehat{D}$, the local
finiteness of $\Vv$ implies that $K \cap V_j = \emptyset$ for
all but finitely many $j$. Hence there exists $n \ge 0$ such
that $g_n(x) = 1$ for all $x \in K$. Therefore,
$(g_n)^\infty_{n=0}$ is an approximate identity for $B$ and
hence for $A$. Fix $a \in A$. Then $g_n a g_n \to a$ and
$P(g_nag_n) = g_nP(a)g_n$ for all $n$. Fix $x \in \widehat{B}$.
Since $x(f_i)x(f_j) = 0$ whenever $x \not\in V_i \cap V_j$, we
obtain
\[
x \circ P(g_nag_n) = x (g_nP(a)g_n) =
           \sum_{\{0\leq i,j\leq n:x \in V_i \cap V_j\}} x(f_i) \big(x \circ P(a)\big) x(f_j).
\]
Hence
\[
P(g_nag_n) =
           \sum_{\{0\leq i,j\leq n: V_i \cap V_j\neq\emptyset\}} f_i P(a) f_j.
\]
Suppose that $V_i \cap V_j \ne \emptyset$. Then $V_i \cup V_j
\in \Uu(\psi)$, so $\psi|_{V_i\cup V_j}$ is injective, and
Lemma~\ref{lem:normaliser fairy} gives $f_i a  f_j  \in B$.
Hence  $f_i P(a) f_j = P(f_i a f_j) = f_i a  f_j$ and we have
\begin{equation}\begin{split}\label{eq-kernel}
P(g_nag_n) &= \sum_{\{0\leq i,j\leq n: V_i \cap V_j\neq\emptyset\}} f_i  a f_j, \quad \text{and} \\
(I - P)(g_nag_n) & = \sum_{\{0\leq i,j\leq n: V_i \cap V_j=\emptyset\}} f_i  a f_j.
\end{split}
\end{equation}
Suppose now that $a \in \ker P$. Then $g_nag_n \in \ker P$.
Since $g_nag_n \to a$, it suffices to show that $g_nag_n$ may
be expressed as a sum of free normalisers. Using
\eqref{eq-kernel} and $P(g_nag_n) = 0$ we have
\[
g_nag_n = \sum_{\{0\leq i,j\leq n: V_i \cap V_j=\emptyset\}}f_i  a f_j.
\]
Since $\psi|_{V_k}$ is injective and $\supp f_k \subset V_k$  for
all $k$, Lemma \ref{lem:normaliser fairy} gives $f_i  a f_j \in N(B)$.
If $V_i \cap V_j = \emptyset$, then $f_i  f_j = 0$ so that
$(f_iaf_j)^2=0$. Thus $f_i  a f_j \in N_f(B)$ as required, and
$(A,B)$ is a diagonal pair.
\end{proof}

We now give an example of a bounded-trace $C^*$-algebra $A$ with
a maximal abelian subalgebra $B$ such that $(A,B)$ has the
extension property but is not a diagonal pair. Thus
Theorem~\ref{thm:ext prop => diag} cannot be extended from Fell
algebras to bounded-trace $C^*$-algebras.

\begin{example}\label{ex:alex}
Let $C := \{f \in C([0,1],M_2) : f(0) \in \CC I_2\}$, and let
$D$ be the subalgebra of $C$ consisting of functions $f$ such
that each $f(t)$ is a diagonal matrix. Then $C$ is a
bounded-trace algebra, but is not a Fell algebra, and $D$ is an
abelian $C^*$-subalgebra. Each pure state of $D$ has the form
$d \mapsto d(t)_{i,i}$ for some $t \in [0,1]$ and $i \in
\{1,2\}$, and then $c \mapsto c(t)_{i,i}$ is the unique
extension to a pure state of $C$, so $D$ has the extension
property relative to $C$.

For $t > 0$, let $u_t := \big(\begin{smallmatrix} \cos(1/t) &
\sin(1/t) \\ -\sin(1/t) & \cos(1/t)\end{smallmatrix}\big) \in
M_2(\CC)$. Define $\alpha \in \Aut(C)$ by
\[
\alpha(f)(t) =
    \begin{cases}
        u_t f(t) u^*_t &\text{ if $t > 0$} \\
        f(0) &\text{ if $t = 0$.}
    \end{cases}
\]

Let
\[
A := M_2(C)\qquad\text{and}\qquad B :=
    \Big\{\Big(\begin{array}{cc}d_1 & 0 \\ 0 & \alpha(d_2) \end{array}\Big) : d_1, d_2 \in D\Big\}.
\]
Then $A$ is not a Fell algebra but has bounded trace; $B$ is abelian, and $B$ has the
extension property relative to $A$ because each of $D$ and
$\alpha(D)$ has the extension property relative to $C$. The
unique faithful conditional expectation $P : A \to B$ is given
by
\[
P := \Big(\begin{array}{cc} \phi & 0 \\ 0 & \alpha \phi \alpha^{-1} \end{array}\Big),
\]
where $\phi$ is the canonical expectation from $C$ onto $D$:
$\phi(c)(t) = \big(\begin{smallmatrix} c_{1,1}(t) & 0 \\ 0 &
c_{2,2}(t)\end{smallmatrix}\big)$.

We claim that $B$ is not diagonal in $A$. First observe that if
$n$ is a normaliser of $D$ in $C$, then there exists
$\lambda(n) \in \CC$ such that $n(0) = \lambda(n)I_2$ by
definition of $C$. Hence the off-diagonal entries of $n(t)$ go
to zero as $t$ goes to zero. Since $n$ is a normaliser, for $t
> 0$ the matrix $n(t)$ is either diagonal, or a linear combination
of the off-diagonal matrix units. In particular, if $n(0) \not=
0$ then by continuity, $n(t)$ is diagonal for $t$ in some
neighbourhood of $0$. If $n$ is a \emph{free} normaliser, then
each $n(t)^2 = 0$, and it follows from the above that $n(0) =
0$.

Now suppose that
\[
n = \Big(\begin{array}{cc} n_{1,1} & n_{1,2} \\ n_{2,1} & n_{2,2} \end{array}\Big) \in A
\]
is a normaliser of $B$. We claim that $n_{1,2}(0) = 0$. Note that for $t
> 0$, each of $n_{1,1}(t)$, $u_t^* n_{2,2}(t) u_t$ is a
normaliser of $D(t)$ and each of $n_{1,2}(t) u_t$, and $u^*_t
n_{2,1}(t)$ is a normaliser of $D(t)$ in $C(t)$. Suppose for
contradiction that  $n_{1,2}(0) \not= 0$.
Since $n_{1,2}(0)$ is diagonal, there exists $\varepsilon$ such
that for $t < \varepsilon$, both $\|n_{1,2}(t)\| >
\|n_{1,2}(0)\|/\sqrt{2}$ and $|(n_{1,2}(t))_{i,j}| <
\|n_{1,2}(0)\|/2$ for $i \not= j$. Choose $t_0 < \varepsilon$
such that $u_{t_0} = \big(\begin{smallmatrix} 1/\sqrt{2} &
1/\sqrt{2} \\ -1/\sqrt{2} & 1/\sqrt{2}\end{smallmatrix}\big)$.
Since $n_{1,2}(t_0)u_{t_0}$ is a normaliser of $D(t_0)$, it is either
diagonal, or a linear combination of off-diagonal matrix units,
and since $t_0 < \varepsilon$, there is an entry of
$n_{1,2}(t_0)u_{t_0}$ with modulus at least
$\|n_{1,2}(0)\|/\sqrt{2}$. It follows by choice of $u_{t_0}$
that $n_{1,2}(t_0) = (n_{1,2}(0)u_{t_0}) u^*_{t_0}$ has at
least one off-diagonal entry of modulus greater than
$(\|n_{1,2}(0)\|/\sqrt{2})(1/\sqrt{2}) = \|n_{1,2}(0)\|/2$,
contradicting the choice of $\varepsilon$. Hence $n_{1,2}(0) = 0$ as
claimed.

The function $f : [0,1] \to M_4$ given by
\[
f(t) = \big(\begin{smallmatrix} 0_2 & I_2 \\ 0_2 & 0_2 \end{smallmatrix}\big)
\]
belongs to $\ker(P) \subset A$. But $f(0)_{1,2} \not=0$, so $f$ is
not in the closed span of the normalisers of $B$ in
$A$, and hence is not in the closed span of the free normalisers. In particular $B$ is not diagonal in $A$.
\end{example}

We will show that every separable Fell algebra is Morita
equivalent to one with a diagonal subalgebra; to do this we need:

\begin{lemma}
Let $A$ be a separable Fell algebra. Then there exists a
countable set of abelian elements of $A$ which generate $A$ as
an ideal.
\end{lemma}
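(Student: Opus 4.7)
The plan is to use Theorem~\ref{thm-fell}, which states that the Fell algebra $A$ is generated as a $C^*$-algebra by its set $S$ of abelian elements, and then to extract a countable generating subfamily by a standard separability argument.

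Concretely, I would fix a countable norm-dense sequence $(c_n)_{n \ge 1}$ in $A$, which is possible because $A$ is separable. Since $A = C^*(S)$, for each pair $n, m \ge 1$ there is a $*$-polynomial $p_{n,m}$ in finitely many elements of $S$ with $\|c_n - p_{n,m}\| < 1/m$; let $F_{n,m} \subset S$ denote the finite set of abelian elements appearing in $p_{n,m}$. Setting $\{a_k : k \ge 1\} := \bigcup_{n,m} F_{n,m}$ gives a countable family of abelian elements, and by construction every $c_n$ lies in the $C^*$-subalgebra $C^*(\{a_k\})$, so this subalgebra is dense in $A$ and therefore equals $A$. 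Since the closed two-sided ideal generated by a subset of $A$ contains the $C^*$-subalgebra generated by it, the closed ideal generated by $\{a_k\}$ is all of $A$, as required.

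I do not foresee any serious obstacle: the argument simply uses separability to pass from an arbitrary $C^*$-generating set to a countable one, combined with the elementary observation that generating as a $C^*$-algebra is stronger than generating as an ideal. As an alternative route, one could cover $\widehat{A}$ by the open sets $U_a = \{[\pi] : \pi(a) \ne 0\}$ indexed by abelian $a$ (using Lemma~\ref{lem-elementary?}), invoke second countability of $\widehat{A}$ for separable Type~I algebras to extract a countable subcover, and conclude by noting that no irreducible representation can annihilate every element of the resulting countable family.
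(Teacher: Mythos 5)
Your proof is correct, and your main argument takes a genuinely different route from the paper's. The paper argues via the spectrum: by Lemma~\ref{lem-elementary?} each irreducible representation $\pi$ admits an abelian element $a_\pi$ with $\pi(a_\pi)\neq 0$, the open sets $U_{\pi}=\{[\sigma]:\sigma(a_\pi)\neq 0\}$ cover $\widehat A$, second countability of $\widehat A$ for separable $A$ (Dixmier, Proposition~3.3.4) yields a countable subcover, and then the ideal generated by the corresponding countable family of abelian elements is annihilated by no irreducible representation and hence equals $A$ --- this is exactly the ``alternative route'' you sketch at the end. Your primary argument instead works inside $A$ itself: Theorem~\ref{thm-fell} gives $A=C^*(S)$ for $S$ the set of abelian elements, and a routine separability argument (approximating a countable dense sequence by $*$-polynomials, each involving only finitely many elements of $S$) extracts a countable subfamily whose generated $C^*$-subalgebra is closed and dense, hence all of $A$; since any closed two-sided ideal containing a set contains the $C^*$-subalgebra it generates, the countable family generates $A$ as an ideal. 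Both arguments are sound and of comparable length; yours avoids any appeal to the topology of $\widehat A$ and in fact proves the stronger statement that the countable family generates $A$ as a $C^*$-algebra, while the paper's covering argument stays closer to the Fell-condition picture and uses only Lemma~\ref{lem-elementary?} rather than the full Theorem~\ref{thm-fell} (though there is no circularity either way, since Theorem~\ref{thm-fell} precedes this lemma).
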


\begin{proof}
By Lemma~\ref{lem-elementary?}, for every irreducible
representation $\pi$ of $A$ there exists an abelian element
$a_\pi$ of $A$ such that $\pi(a_\pi)\neq 0$.  For each $\pi$,
the set $U_\pi := \{[\sigma]\in\widehat A : \sigma(a_\pi) \ne 0
\}$
is an open neighbourhood of $[\pi]$ in
$\widehat A$. Since $A$ is separable, the topology for $\widehat A$ has
a countable base \cite[Proposition~3.3.4]{dix}. So there exists
a countable subset $S:=\{a_{\pi_i} : i \in \NN\}$ such that
$\{U_{\pi_i} : i \in \NN\}$ is an open cover of $\widehat A$. Let
$I$ be the ideal generated by $S$.  Since
$\sigma(a_{\pi_i})\neq 0$ when $[\sigma]\in U_{\pi_i}$, it
follows that  $\pi|_I\neq 0$ for every irreducible
representation $\pi$ of $A$. Hence $I=A$.
\end{proof}

\begin{thm}\label{prp:construct diagonal}
Let $A$ be a separable Fell algebra, and let $\{a_i : i \in
\NN\}\subset A$ be a set of abelian norm-one elements which
generate $A$ as an ideal. Let $\Kk = \Kk(\ell^2(\NN))$, and
denote the canonical matrix units in $\Kk$ by $\{\Theta_{i\,j} :
i,j \in \NN\}$. Set
\[
a:=\sum_{i=1}^\infty \frac{1}{i} a_i \otimes \Theta_{i\,i} \in A \otimes \Kk
\]
Then
\begin{enumerate}
\item  The hereditary subalgebra $C := \overline{a (A
    \otimes \Kk) a}$ generated by $a$
    is Morita equivalent to $A$; and 
\item $D := \bigoplus_{i \in \NN} \overline{a_i A a_i}
    \otimes \Theta_{i\,i}$ is a $C^*$-diagonal in $C$; and
\item the conditional expectation $P:C\to D$ is given by
    $P(c)=\bigoplus_i(1\otimes\Theta_{i\,i})c(1\otimes\Theta_{i\,i})$.
\end{enumerate}
\end{thm}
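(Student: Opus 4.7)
My plan follows the order (1), (3), (2), since part (3) provides the explicit conditional expectation needed to verify (D2) in part (2). For (1), I apply Lemma~\ref{lem-morita} with $A$ replaced by $A\otimes\Kk$ and $b$ replaced by the positive element $a$. This immediately gives a Morita equivalence between $C=\overline{a(A\otimes\Kk)a}$ and the ideal $J:=\overline{(A\otimes\Kk)\,a\,(A\otimes\Kk)}$ of $A\otimes\Kk$. The key is to show $J=A\otimes\Kk$: compressing $a$ by the multiplier projection $1\otimes\Theta_{i\,i}$ gives $\tfrac{1}{i}a_i\otimes\Theta_{i\,i}\in J$, and then multiplying on either side by elements $c\otimes\Theta_{j\,k}\in A\otimes\Kk$ yields $Aa_iA\otimes\Theta_{j\,k}\subset J$ for all $i,j,k$. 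Since $\{a_i\}$ generates $A$ as an ideal, $A\otimes\Kk\subset J$, so $C\Me A\otimes\Kk\Me A$.

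For (3), I define $P$ by the stated formula, show the sum makes sense as an element of the $c_0$-direct sum $D$, and verify that $P$ is a faithful conditional expectation. The dense subspace of $C$ spanned by elements $a(x\otimes\Theta_{j\,k})a=\tfrac{1}{jk}a_jxa_k\otimes\Theta_{j\,k}$ is the central tool: a direct matrix-unit computation shows that $(1\otimes\Theta_{i\,i})\big(a(x\otimes\Theta_{j\,k})a\big)(1\otimes\Theta_{i\,i})=0$ unless $i=j=k$, in which case the compression equals $\tfrac{1}{i^2}a_ixa_i\otimes\Theta_{i\,i}\in\overline{a_iAa_i}\otimes\Theta_{i\,i}$. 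An $\varepsilon$-approximation argument then gives $\|(1\otimes\Theta_{i\,i})c(1\otimes\Theta_{i\,i})\|\to 0$ for every $c\in C$, so $P(c)$ lies in $D$. The map $P$ is linear, norm-decreasing and positive, and the same computation shows $P|_D=\id_D$, so $P$ is idempotent of norm one, hence a conditional expectation by Remark~\ref{rem-cond-exp}(1). Faithfulness: if $P(c^*c)=0$ then $c(1\otimes\Theta_{i\,i})=0$ for every $i$, and since $\sum_i 1\otimes\Theta_{i\,i}$ converges strictly to $1\otimes 1$, we conclude $c=0$.

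For (2), I verify the three axioms of Definition~\ref{dfn:diagonal}. Abelianness of $D$ is automatic from abelianness of each $\overline{a_iAa_i}$ together with orthogonality of the summands. For (D1), note that $a$ lies in $D$ and is strictly positive in $C$, so the approximate identity of $C$ produced by continuous functional calculus on $a$ lies in $C^*(a)\subset D$. (D2) is part (3). For (D3), I show that each $n=a_ixa_j\otimes\Theta_{i\,j}$ with $i\ne j$ and $x\in A$ is a free normaliser of $D$: $n^2=0$ is immediate from $\Theta_{i\,j}^2=0$ for $i\ne j$, and $n^*(b\otimes\Theta_{k\,k})n$ vanishes unless $k=i$, in which case it equals $a_jx^*a_iba_ixa_j\otimes\Theta_{j\,j}\in\overline{a_jAa_j}\otimes\Theta_{j\,j}\subset D$; the computation for $nDn^*\subset D$ is symmetric. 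The dense spanning elements of $C$ thus split into diagonal pieces (in $D$) and off-diagonal pieces (in $N_f(D)$); since $P$ annihilates exactly the off-diagonal pieces, a limit argument gives $\ker P\subset\clsp N_f(D)$. The reverse inclusion follows from Lemma~\ref{lem:exp kills normalisers} once (D1) is in hand.

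The main technical obstacle is the convergence argument in part (3) showing $\|(1\otimes\Theta_{i\,i})c(1\otimes\Theta_{i\,i})\|\to 0$ for general $c\in C$; everything else reduces to matrix-unit bookkeeping once the explicit formula for $P$ is in place and the dense spanning set $\{a(x\otimes\Theta_{j\,k})a\}$ of $C$ has been identified.
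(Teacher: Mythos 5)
Your proposal is correct, and parts (1) and (3) follow essentially the paper's lines: fullness of the hereditary subalgebra via matrix-unit manipulations (your compression of $a$ by $1\otimes\Theta_{i\,i}$ followed by multiplication by $c\otimes\Theta_{j\,k}$ is the same computation as the paper's $a_i\otimes\Theta_{j\,k}=i\lim_\lambda(e_\lambda\otimes\Theta_{j\,i})a(e_\lambda\otimes\Theta_{i\,k})$), and Tomiyama's characterisation (Remark~\ref{rem-cond-exp}) to see that the compression formula defines an expectation; you merely supply the convergence details (that the compressions lie in the $c_0$-direct sum and tend to zero in norm, and faithfulness via strict convergence of $\sum_i 1\otimes\Theta_{i\,i}$) that the paper leaves implicit. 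Where you genuinely diverge is part (2): the paper does \emph{not} verify (D1)--(D3) directly. Instead it notes that $C$ is a Fell algebra, invokes Theorem~\ref{thm:ext prop => diag} to reduce diagonality to the extension property, and then uses Remark~\ref{rmk:nonunital}(2) to reduce that to density of $D+\operatorname{span}[D,C]$ in $C$, which it checks by approximating each off-diagonal spanning element $a_jb_{j\,k}a_k\otimes\Theta_{j\,k}$ by the commutators $[a_j^{1/n}\otimes\Theta_{j\,j}\,,\,c]$. Your route verifies the axioms by hand: $a\in D$ is strictly positive in $C$, giving (D1); the explicit $P$ gives (D2); and the off-diagonal spanning elements are free normalisers, which together with Lemma~\ref{lem:exp kills normalisers} gives (D3). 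The paper's argument is shorter at this point but leans on the substantial Theorem~\ref{thm:ext prop => diag}; yours is longer but self-contained and more elementary --- it uses only the abelianness of the $a_i$, matrix-unit bookkeeping, and Tomiyama, and does not need the Fell property of $A$ beyond the existence of the abelian generators, nor the extension-property machinery. Both are sound; your ordering (1), (3), (2) is the natural one for the direct verification, since the explicit expectation is the witness for (D2) and (D3).
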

\begin{proof} For~(1), it suffices to show that
$C$ is full  or, equivalently, that $\overline{(A \otimes \Kk) a (A \otimes \Kk)} = A \otimes \Kk$.
Since $A$ is generated by the $a_i$, it suffices to show that for all $i, j,k \in \NN$
\[
a_i \otimes \Theta_{j\,k} \in \overline{(A \otimes \Kk) a
(A \otimes \Kk)}.
\]
Fix $i,j,k \in \NN$ and let $(e_\lambda)_{\lambda \in
\Lambda}$ be an approximate identity for $A$. Then
\[
a_i \otimes \Theta_{j\,k}
    =i \lim_{\lambda \in \Lambda} (e_\lambda \otimes \Theta_{j\,i})
                                    a (e_\lambda \otimes \Theta_{i\,k})
    \in \overline{(A \otimes \Kk) a (A \otimes \Kk)}
\]
as required.

For~(2), first observe that $D$ is commutative because each
$a_i$ is an abelian element.  Since $A$ is a Fell algebra so is
$C$. So by Theorem~\ref{thm:ext prop => diag}, to see that $D$
is diagonal in $C$, it suffices to prove that $D$ has the
extension property relative to $C$. By
Remark~\ref{rmk:nonunital}(2), it is enough to show that $D +
\lsp[D, C]$ is dense in $C$.

Sums of the form
\[
\sum_{j,k = 1}^n a_j b_{j\,k} a_k \otimes\Theta_{j\,k},
\]
with $b_{j\,k} \in A$, are dense in $C$. It therefore suffices
to show that elements of the form $a_j b_{j\,k} a_k \otimes
\Theta_{j\,k}$ with $j \ne k$ may be approximated by elements in
$[D,C]$.  Fix $c := a_j b_{j\,k} a_k \otimes \Theta_{j\,k}$ with
$j \ne k$.  For $n \in \NN$, let $d_n := a_j^{1/n} \otimes
\Theta_{j\,j} \in D$. Since $a_j^{1/n}a_j \to a_j$ as $n \to
\infty$,
\[
[d_n, c] = d_n c - cd_n  = a_j^{1/n}a_j b_{j\,k} a_k \otimes \Theta_{j\,k}
            \xrightarrow{n \to \infty} a_j b_{j\,k} a_k \otimes \Theta_{j\,k} = c.
\]
Hence $c$ may be approximated by the commutators $[d_n, c]$,
and so $D + \lsp[D, C]$ is dense in $C$.

For~(3), observe that the formula given for $P$ determines a
norm-decreasing projection of $C$ onto $D$. This is then a
conditional expectation by Remark~\ref{rem-cond-exp}, and is the
unique expectation from $C$ to $D$ as discussed in
Notation~\ref{rmk:def-psi}.
\end{proof}

\section{Fell algebras and twisted groupoid \texorpdfstring{$C^*$}{C*}-algebras}\label{sec:TypeI0 and diagonals}

In \cite[Theorem~3.1]{Kumjian1986} Kumjian showed that if
$(A,B)$ is a diagonal pair, then $A$ is isomorphic to a
twisted groupoid $C^*$-algebra. Here we combine this with the results
of Section~\ref{sec:ext props} to show that up to Morita
equivalence every Fell algebra arises as a twisted groupoid
$C^*$-algebra, and conversely determine for which twists the
associated twisted groupoid $C^*$-algebra is Fell. We start
with some background from \cite[\S2]{Kumjian1986}.

\label{defn twist} A \emph{$\TT$-groupoid} $\Gamma$ is a
locally compact, Hausdorff groupoid $\Gamma$ equipped with a
free range- and source-preserving action of the circle group
$\TT$  such that $(t_1 \cdot \gamma_1)(t_2 \cdot \gamma_2) =
(t_1t_2) \cdot (\gamma_1\gamma_2)$ whenever
$(\gamma_1,\gamma_2)$ is a composable pair in $\Gamma$. The
quotient groupoid $\Gamma/\TT$ is  Hausdorff because $\TT$ is compact.

Recall that a sequence $K^{(0)} \hookrightarrow K
\stackrel{i}{\to} G \stackrel{q}{\to} H$ of groupoids is
\emph{exact} if $q$ is a surjective groupoid homomorphism which
restricts to an isomorphism of unit spaces, and $i$ is an
isomorphism of $K$ onto $\ker(q) = \{g \in G : q(g) \in
H^{(0)}\}$. A \emph{topological twist} or just \emph{twist} is a
$\TT$-groupoid $\Gamma$ such that there is an exact
sequence\label{pg-twist2}
\[
\Gamma^{(0)}\to\Gamma^{(0)} \times \TT \to \Gamma \stackrel{q}{\to} R
\]
of groupoids in which $R$ is a principal, \'etale groupoid (a
\emph{relation} in the terminology of \cite{Kumjian1986}). Note
that $\Gamma^{(0)}=R^{(0)}$. We often abbreviate the exact
sequence to $\Gamma\to R$. Twists $\Gamma_1 \stackrel{q_1}{\to}
R$ and $\Gamma_2 \stackrel{q_2}{\to} R$ over the same relation
$R$ are \emph{isomorphic} if there is a $\TT$-equivariant
isomorphism $\pi : \Gamma_1 \to \Gamma_2$ such that $q_2 \circ
\pi = q_1$; we call $\pi$ a \emph{twist isomorphism}. A twist
$\Gamma \stackrel{q}{\to} R$ is said to be
\emph{trivial}\label{pg:trivial} if $q$ has a continuous
section which is a groupoid homomorphism.  A trivial twist over
$R$ is isomorphic to the cartesian-product groupoid $R \times
\TT$ \cite[Remark~4.2]{Kumjian1986}.\label{pg:trivial twist}

We outline in the appendix the
construction of the twisted groupoid $C^*$-algebra
$\tgcsa{\Gamma}{R}$ associated to a twist, and also prove there
that the $C^*$-algebra of a trivial twist is isomorphic to the
reduced groupoid $C^*$-algebra $C^*_{\red}(R)$ of $R$. In brief,
$\tgcsa{\Gamma}{R}$ is a $C^*$-completion of the collection of
$C_c(\Gamma;R)$ of compactly supported $\TT$-equivariant
functions on $\Gamma$; the closure of the algebra of sections in
$C_c(\Gamma;R)$ which are supported on $\TT \cdot \Gamma^0$ can
be identified with $C_0(\Gamma^{(0)})$, and restriction of
functions extends to a conditional expectation $P :
\tgcsa{\Gamma}{R} \to C_0(\Gamma^{(0)})$\label{pg:tgcsa outline}.

For our classification theorem, a key tool will be the following
theorem, proved in \cite{Kumjian1986}.

\begin{thm}\label{thm-alex}
\textup{(\cite[Theorem~3.1]{Kumjian1986})} Let $A$ be a
separable $C^*$-algebra with diagonal $B$, and let $Y :=
\widehat{B}$. Then there exists a twist $\Gamma\to R$,
a homeomorphism $\phi : Y \to \Gamma^{(0)}$, and an isomorphism
$\pi : A \to \tgcsa{\Gamma}{R}$ such that the following diagram
commutes.
\begin{equation}\label{eq:twist CD}
\begin{CD}
B @>{\phi_*}>> C_0(\Gamma^{(0)}) \\
@V{\subseteq}VV @V{\subseteq}VV \\
A @>{\pi}>> \tgcsa{\Gamma}{R}
\end{CD}
\end{equation}
\end{thm}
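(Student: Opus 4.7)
The plan is to construct $R$, $\Gamma$, and the isomorphism $\pi$ directly from the normalisers $N(B)$, with $\phi$ being the identification $Y = \widehat{B} = \Gamma^{(0)}$. For the relation $R$: each $n \in N(B)$ has $n^*n, nn^* \in B$ by Lemma~\ref{lem:exp kills normalisers}, so one obtains open sets $D(n) := \{y \in Y : y(n^*n) > 0\}$ and $R(n) := \{y \in Y : y(nn^*) > 0\}$. Conjugation by $n$ implements a $*$-isomorphism of the hereditary subalgebras $\overline{(n^*n)B(n^*n)}$ and $\overline{(nn^*)B(nn^*)}$, which dualises to a homeomorphism $\alpha_n : D(n) \to R(n)$. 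I would then let $R$ be the groupoid of germs $\{[\alpha_n, y] : n \in N(B),\ y \in D(n)\}$, topologised so that the sets $\{[\alpha_n, y] : y \in D(n)\}$ form a basis of compact-open bisections; principality and \'etaleness come from the fact that the $\alpha_n$ generate a pseudogroup of partial homeomorphisms of $Y$.

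Next I would construct the twist $\Gamma$ as the quotient of $\{(n, y) : n \in N(B),\ y \in D(n)\}$ by the relation $(n_1, y) \sim (n_2, y)$ whenever there exists $b \in B^+$ with $y(b) > 0$ and $n_1 b = n_2 b$. The $\TT$-action $t \cdot [n, y] := [tn, y]$ is free and range- and source-preserving, and the quotient map $[n, y] \mapsto [\alpha_n, y]$ gives the exact sequence $\Gamma^{(0)} \times \TT \to \Gamma \to R$, with the kernel parameterised by germs of elements of $B$. The topology on $\Gamma$ is the one for which each $\{[n, y] : y \in D(n)\}$ is a bisection homeomorphic to $D(n)$ via the source map. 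The delicate point is Hausdorffness of $\Gamma$: distinct germs $[n_1, y]$ and $[n_2, y]$ must be separated by open bisections, and this reduces to showing that $n_1 - n_2$ does not lie in the closed span of normalisers that vanish at $y$; here (D3) and the faithfulness of $P$ are essential.

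Finally I would define $\pi : A \to \tgcsa{\Gamma}{R}$ on normalisers by sending $n \in N(B)$ to the $\TT$-equivariant section $\hat{n}$ supported on the bisection $\{[n, y] : y \in D(n)\}$, normalised so that $|\hat{n}([n,y])|^2 = y(n^*n)$. Since the composition in $\Gamma$ is set up so that $[n_1, \alpha_{n_2}(y)][n_2, y] = [n_1 n_2, y]$, this assignment is multiplicative and $*$-preserving on $N(B)$, and restricts by construction to the identification $B = C_0(\Gamma^{(0)})$. The expectation $P$ intertwines with the canonical expectation of $\tgcsa{\Gamma}{R}$ onto $C_0(\Gamma^{(0)})$ given by restriction to $\Gamma^{(0)}$, so faithfulness of $P$ forces $\pi$ to be injective; surjectivity follows by approximating any $f \in C_c(\Gamma; R)$ by finite sums $\sum \hat{n}_i \cdot g_i$ with $g_i \in C_c(\Gamma^{(0)})$, using a partition of unity subordinate to a cover of $\supp f$ by bisections of the form $\{[n_i, y]\}$.

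The main obstacle is the construction and Hausdorffness of $\Gamma$: the algebraic verifications, once $\Gamma$ is in place, are essentially forced by (D1)--(D3) and by the uniqueness of the expectation, but the topological construction requires carefully separating normalisers that agree on a given fibre yet differ as elements of $A$, and that separation is exactly what the faithful conditional expectation ultimately provides.
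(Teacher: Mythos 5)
The paper does not reprove this theorem: it cites \cite[Theorem~3.1]{Kumjian1986} and only sketches Kumjian's construction (with Remark~\ref{nonunital} handling the nonunital case), so your attempt is really being measured against that construction, and it has a genuine gap at its very first step. The equivalence relation you use to define $\Gamma$ is too fine, and what it produces is not a twist. Declaring $(n_1,y)\sim(n_2,y)$ only when a \emph{single} $b\in B^+$ with $y(b)>0$ satisfies $n_1b=n_2b$ identifies two normalisers at $y$ exactly when they agree on a neighbourhood of $y$; consequently the kernel of your map $\Gamma\to R$ over a unit $y$ is the group of germs at $y$ of elements of $B$ not vanishing at $y$ --- precisely what you yourself write (``kernel parameterised by germs of elements of $B$'') --- and this is far larger than $\TT$. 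For instance, if $b_1,b_2\in B$ are bump functions equal to the constants $1$ and $2$ near $y$, then $[b_1,y]\neq[b_2,y]$ in your $\Gamma$, both map to the unit $y$ of $R$, and they are not related by the action $t\cdot[n,y]=[tn,y]$. So the sequence $\Gamma^{(0)}\times\TT\to\Gamma\to R$ is not exact and $\Gamma\to R$ is not a twist in the sense of page~\pageref{defn twist}. The construction sketched after Theorem~\ref{thm-alex} avoids this by allowing two elements of $B$: $(n_1,y)\sim(n_2,y)$ iff $n_1b_1=n_2b_2$ with $y(b_1),y(b_2)>0$, equivalently $y(n_1^*n_2)>0$; this collapses both the germ ambiguity and positive rescalings, so that the kernel fibre is exactly a circle. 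Kumjian in fact sidesteps the topological delicacies altogether by realising $[n,y]$ as the normalised functional $c\mapsto y(n^*c)\,y(n^*n)^{-1/2}$ inside $A^*$ with the weak$^*$ topology, which makes $\Gamma$ Hausdorff for free, whereas in your germ picture Hausdorffness requires a real argument that you only gesture at.

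Two further points are asserted where the diagonal axioms actually have to do work. Principality of $R$ does not ``come from the fact that the $\alpha_n$ generate a pseudogroup'': groupoids of germs typically have large isotropy, and for a Cartan pair that is not a diagonal the analogous Weyl groupoid is genuinely non-principal; it is (D3), together with the faithful expectation, that kills isotropy here, and your sketch never invokes it for this purpose. Likewise the passage from $n\mapsto\hat n$ on $N(B)$ to an isomorphism $A\cong\tgcsa{\Gamma}{R}$ is not ``essentially forced'': one must show the assignment is well defined and bounded on $\lsp N(B)$ (distinct finite sums of normalisers can represent the same element of $A$), that the norm of $A$ coincides with the reduced norm --- this is where faithfulness of $P$ and uniqueness of pure-state extensions enter, and it is what gives injectivity --- and only then does a partition-of-unity argument give surjectivity. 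These verifications are the analytic core of \cite[Theorem~3.1]{Kumjian1986}, which the paper relies on; your proposal does not supply substitutes for them.
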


Since we need the details below, we now sketch the construction of the twist $\Gamma$
from a unital diagonal pair $(A,B)$ given in \cite[Theorem~3.1]{Kumjian1986}; Remark~\ref{nonunital} below explains how the construction works for nonunital diagonal pairs.  Let $Y=\widehat B$ and set
$$
\Gamma_0=\{(a,y)\in N(B)\times Y: y(a^*a)>0\}.
$$
For $y \in Y$ we continue to write $y$ for the unique
state extension to $A$, and then for each $(a,y)\in \Gamma_0$,
we define $[a,y] : A \to \CC$ by $
[a,y](c)=y(a^*c)y(a^*a)^{-1/2}$. Then each $[a,y]$ belongs to
the dual space $A^*$ of $A$, and the following are equivalent:
(1) $[a,y]=[c,y]$; (2) $y(a^*c)>0$; (3) there exist $b_1$,
$b_2\in B$ with $y(b_1), y(b_2) > 0$ such that $ab_1=cb_2$. Set
\begin{equation}\label{eq:Gamma def}
\Gamma:=\{[a,y]:(a,y)\in\Gamma_0\}\subset A^*.
\end{equation}
Define a $\TT$-action on $\Gamma$ by scalar multiplication:
$t\cdot[a,y]=\big[\,\overline{t}a,y\big]$; this agrees with scalar
multiplication on  $A^*$ but not with the convention used in
\cite[\S{5}]{Renault2008}.

By \cite[Proposition~1.6]{Kumjian1986}, for each $a\in N(B)$,
there is a homeomorphism
\[
\sigma_a:\{y\in Y: y(a^*a)>0\}\to \{y\in Y: y(aa^*)>0\}
\]
such that $y(a^*ba)=\sigma_a(y)(baa^*)$ for all $b\in B$ and
all $y$ in the domain of $\sigma_a$. The set $\Gamma$, with
source and range maps defined by $s([d,y])=y$ and
$r([d,y])=\sigma_d(y)$, and partial multiplication defined by
$[a,\sigma_c(y)][c,y]=[ac, y]$, is a $\TT$-groupoid. The
quotient groupoid $R=\Gamma/\TT$ is a principal \'etale
groupoid, and $\Gamma \to R$ is a twist satisfying the
requirements of Theorem~\ref{thm-alex}.  The class
of this twist is the negative of the one constructed in
\cite[\S{5}]{Renault2008}.

\begin{remark}\label{nonunital}
The construction outlined above is for unital diagonal pairs
$(A,B)$. However, as mentioned in the proof of
\cite[Theorem~3.1]{Kumjian1986}, the construction may be
applied to nonunital pairs as follows. When $(A,B)$ is a
nonunital diagonal pair, one applies the above construction to
the diagonal pair $(\widetilde{A}, \widetilde{B})$ to obtain a
twist $\widetilde{\Gamma} \to \widetilde{R}$ with unit space
\[
\widetilde{\Gamma}^{(0)}
    = \widetilde{R}^{(0)}
    = \widehat{B} \cup \{\infty\}.
\]
It is straightforward to see that $\widehat{B} \subset
\widetilde{\Gamma}^{(0)}$ is an open invariant subset, so we
may restrict both $\widetilde{\Gamma}$ and $\widetilde{R}$ to
$\widehat{B}$ to obtain a twist $\Gamma \to R$. It is routine
to check that $\tgcsa{\Gamma}{R}$ may be identified with an
ideal $I \lhd \tgcsa{\widetilde{\Gamma}}{ \widetilde{R}} \cong
A$ for which the quotient is isomorphic to $\tgcsa{\TT}{ \{1\}}
= \CC$. Hence $I$ coincides with $A \lhd \widetilde{A}$, and it
is clear from the construction that this identification takes
$I \cap C_0(\widetilde{\Gamma}^{(0)}) = C_0(\Gamma^{(0)})$ to
$B$. In particular, there is an isomorphism $\pi : A \to
\tgcsa{\Gamma}{R}$ which makes the diagram~\eqref{eq:twist CD}
commute.

We claim that $\Gamma$ is still described by~\eqref{eq:Gamma
def}. This is not obvious right off the bat: by definition the
elements of $\Gamma$ are of the form $[n,x]$ where $n$ is a
normaliser of $\widetilde{B}$ in $\widetilde{A}$, and $x$
belongs to $\widehat{B} \subset \widehat{\widetilde{B}}$. So we
must show that if $n = (n',\lambda) \in \widetilde{A}$
normalises $\widetilde{B}$ and $x \in s(n) \setminus
\{\infty\}$, then $[n,x] = [(m,0),x]$ for some normaliser $m$
of $B$ in $A$.

Fix $u \in \Gamma^{(0)}$. Then $u$ has the form $[b_0,x]$ where
$b_0 \in \widetilde{B}^+$ and $x \in \widehat{\widetilde{B}}$.
Moreover, if $x \not= \infty$, then there exists $b \in B^+
\subset \widetilde{B}^+$ such that $b(x) > 0$, and then $[b,x] =
[b_0,x]$. Now for any $n \in N(\widetilde{B}) \subset
\widetilde{A}$ and any $x \in
\{y\in\widehat{\widetilde{B}}:y(n^*n)>0\}$, we can express
$s([n,x]) = [b,x]$ where $b \in B \subset \widetilde{B}$, and
then $[n,x] = [n,x][b,x] = [nb, x]$. We have $nb \in A$ because
$A$ is an ideal in $\widetilde{A}$, and $nb$ also normalises
$B$: for $c \in B$,
\begin{equation}\label{eq:adjusting normalisers}
(nb)^* c (nb) = b^* (n^* c n) b\qquad\text{ and }\qquad
(nb) c (nb)^* = n (bcb^*) n^*,
\end{equation}
and both belong to $B$ because $n$ normalises $\widetilde{B}$
and $A$ is an ideal in $\widetilde{A}$.
\end{remark}

\begin{prop}\label{thm-equivalencerelationoftwist}
Let $(A,B)$ be a diagonal pair such that $A$ is a separable
Fell algebra, and let $\Gamma \to R$ be the twist constructed
from $(A,B)$ as above. Let $\psi : \widehat{B} \to \widehat{A}$
be the spectral map. Then for $x,y \in Y$, there exists $\alpha
\in R$ such that $r(\alpha) = x$ and $s(\alpha) = y$ if and
only if $\psi(x) = \psi(y)$. Furthermore, the map $\alpha
\mapsto (r(\alpha), s(\alpha))$ is a topological groupoid
isomorphism from $R$ onto $R(\psi)$.
%
\end{prop}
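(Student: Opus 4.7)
My plan is to split the proof into two parts: (I) the orbital criterion, that $\psi(x) = \psi(y)$ if and only if there is $\alpha \in R$ with $r(\alpha) = x$ and $s(\alpha) = y$, and (II) promoting the resulting bijection $F : \alpha \mapsto (r(\alpha), s(\alpha))$ from $R$ to $R(\psi)$ to a topological groupoid isomorphism.

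For the forward half of (I), I take $\alpha = q([n,y])$ with $\sigma_n(y) = x$ and realise $x \circ P$ as a vector state in the GNS space $H_y$ of $\pi_y$, via the cyclic unit vector $\hat\eta := \pi_y(n)\xi_y / \|\pi_y(n)\xi_y\|$ (nonzero since $y(n^*n) > 0$, and cyclic because $\pi_y$ is irreducible). The identity $(\pi_y(b)\hat\eta \mid \hat\eta) = y(n^*bn)/y(n^*n) = \sigma_n(y)(b) = x(b)$ for $b \in B$, combined with the uniqueness of pure-state extensions and of the GNS construction, yields $\pi_y \cong \pi_x$ and hence $\psi(y) = \psi(x)$.

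For the reverse half of (I), suppose $\psi(x) = \psi(y) = [\pi]$. Since Fell algebras are liminary, $\pi(A) = \Kk(H_\pi)$, and Lemma~\ref{lem:D-points} identifies $x = x_\mu$, $y = x_\nu$ in the discrete fibre with an orthonormal basis $\{\xi_\lambda\}$ of $H_\pi$ satisfying $\pi(b) = \sum_\lambda x_\lambda(b)\Theta_{\xi_\lambda,\xi_\lambda}$ on $B$. Using that $\psi$ is a local homeomorphism (Theorem~\ref{thm:ext prop => diag}), I choose $f, g \in B^+$ supported in $\psi$-injective neighbourhoods of $y$ and $x$, respectively, with $f(y), g(x) > 0$. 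Picking $a \in A$ with $\pi(a) = \Theta_{\xi_\mu, \xi_\nu}$, Lemma~\ref{lem:normaliser fairy} gives $n := gaf \in N(B)$. A direct calculation using the diagonalisation of $\pi|_B$ and the fact that only the single index $\nu$ (resp.\ $\mu$) contributes to $\pi(f)$ (resp.\ $\pi(g)$) yields $\pi(n) = x_\mu(g)x_\nu(f)\Theta_{\xi_\mu,\xi_\nu}$, whence $y(n^*n) = |x_\mu(g)x_\nu(f)|^2 > 0$ and $\sigma_n(y)(b) = x_\mu(b) = x(b)$ for all $b \in B$; so $\alpha := q([n,y])$ has the desired endpoints.

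For (II), $F$ is a groupoid homomorphism because composition in both groupoids is determined by matching sources and ranges; it is injective by principality of $R$ and surjective onto $R(\psi)$ by part (I), and continuous from continuity of $r$ and $s$. For the inverse, I cover $R$ by open bisections $U$ (available since $R$ is \'etale), shrinking each so that $\psi$ is injective on both $r(U)$ and $s(U)$; part (I) together with principality of $R$ then shows that $F(U)$ coincides with the basic open set $\{(y_1,y_2) \in r(U) \times s(U) : \psi(y_1) = \psi(y_2)\}$ of $R(\psi)$ (the nontrivial inclusion uses that for fixed $y_2 \in s(U)$ the unique $\alpha \in U$ with $s(\alpha) = y_2$ must have $r(\alpha)$ equal to the unique preimage of $\psi(y_2)$ in $r(U)$). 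Hence $F$ is open, and therefore a homeomorphism. The principal obstacle is the reverse half of (I): manufacturing a normaliser whose associated germ has prescribed source and range. The Fell-algebra hypothesis is essential here, simultaneously supplying the liminarity needed to invoke Lemma~\ref{lem:D-points} and the local-homeomorphism property of $\psi$ needed for Lemma~\ref{lem:normaliser fairy}; the topological upgrade in (II) is comparatively routine.
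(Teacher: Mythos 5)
Your proposal is correct and follows essentially the same route as the paper: the forward implication via the vector state determined by $\pi_y(n)\xi_y$ together with the uniqueness of state extensions, the reverse implication by combining Lemma~\ref{lem:D-points}, the local-homeomorphism property of $\psi$ from Theorem~\ref{thm:ext prop => diag} and Lemma~\ref{lem:normaliser fairy} to produce the normaliser $n = gaf$, and then injectivity (principality), continuity and openness of $\alpha \mapsto (r(\alpha),s(\alpha))$. The only cosmetic difference is in the openness step: you cover the \'etale groupoid $R$ by open bisections on whose range and source images $\psi$ is injective and identify their images with basic open sets of $R(\psi)$, whereas the paper exhibits the same kind of bisections concretely as sets $\{q([a,y]) : y \in V\}$ after replacing a given normaliser $a_0$ by $b a_0 c$.
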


\begin{proof}
Let $P$ be the conditional expectation from $A$ to $B$.
Recall that for $x \in \widehat{B}$, we have $\psi(x) =
\psi_P(x) = [\pi_x]$ where  $(\pi_x, \Hh_x, \xi_x)$ is the GNS
triple associated with the pure state $x\circ P$; so we have
$x\circ P(a)=(\pi_x(a)\xi_x\,|\,\xi_x)$ for all $a\in A$ (see
Notation~\ref{rmk:def-psi}).

First fix $\alpha \in R$, and let $x = r(\alpha)$ and $y =
s(\alpha)$. By definition of $R = \Gamma/\TT$, there exists
$n\in N(B)$ such that $y(n^*n) > 0$ and $x = \sigma_n(y)$.  By
scaling $n$ we may assume that $y(n^*n) = 1$. Since $n^*n \in B$
it follows that for $b \in B$, we have $y(b) = y(b)y(n^*n) = y(b
n^*n)$. So by definition of $\sigma_n$, we have $x(b) =
y(n^*bn)$ for all $b \in B$. Since $y(n^*n) = 1$, the vector
$\eta_y := \pi_y(n)\xi_y$ has norm 1. Now
$x(b)=y(n^*bn)=(\pi_y(b)\eta_y\, |\, \eta_y)$ for all $b\in B$.
Since $B$ has the extension property relative to $A$, $x\circ P$
and $a\mapsto (\pi_y(a)\eta_y\, |\, \eta_y)$ coincide on $A$.
Hence, $\pi_y$ and $\pi_x$ are unitarily equivalent, whence
$\psi(x) = \psi(y)$.

Conversely, suppose $\psi(x) = \psi(y)$.  Then the GNS
representations 
$\pi_x$ and $\pi_y$  are unitarily equivalent, so there is an
irreducible representation $\pi : A \to B(\Hh)$ and unit vectors
$\xi, \eta \in \Hh$ such that $y(P(\cdot)) = ( \pi(\cdot)\xi
\,|\, \xi )$ and $x(P(\cdot)) = ( \pi(\cdot)\eta \,|\, \eta )$.
Since $\pi(A) = \Kk(\Hh)$, there exists $a \in A$ such that
$\pi(a) = \Theta_{\eta, \xi}$. Since $A$ is a Fell algebra,
Theorem~\ref{thm:ext prop => diag}(1) implies that $\psi$ is a
local homeomorphism, so there exist open neighbourhoods $U$ of
$y$ and $V$ of $x$ such that $\psi|_U$ and $\psi|_V$ are
injective. Fix and norm-one positive functions $f,g$ with compact
support such that $\supp(f) \subset U$, $\supp(g) \subset V$,
and $f(y) = g(x) = 1$. Then $\pi(f)\xi=\xi$ and
$\pi(g)\eta=\eta$. Let $n := gaf$. Then
$y(n^*n)=(\pi(gaf)\xi\,|\,\pi(gaf)\xi)=(\eta\,|\,\eta)=1$ and
$y(n^*bn)=(\pi(b)\pi(gaf)\xi\,|\,\pi(gaf)\xi)=x(b)$ for all
$b\in B$. Lemma~\ref{lem:normaliser fairy} implies that $n \in
N(B)$, so $\alpha := q([n,y]) \in R$ with $r(\alpha) = x$ and
$s(\alpha) = y$.

It remains to prove that the map $\Upsilon : \alpha \mapsto
(r(\alpha), s(\alpha))$ is a homeomorphism. It follows from the
above that $\Upsilon$ is surjective, and it is injective since
$R$ is principal. It is continuous because the range and source
maps are continuous from $R$ to $Y$. We must now show that
$\Upsilon$ is open. For this, fix $\alpha = q([a_0,x]) \in R$.
Fix neighbourhoods $U_0$ of $\sigma_{a_0}(x)$ and $V_0$ of $x$
in $\widehat{B}$ such that $\psi|_{U_0}$ and $\psi|_{V_0}$ are
homeomorphisms, and fix $b, c \in B^+$ with $\supp b \subset
U_0$ and $\supp c \subset V_0$ such that $b(\sigma_{a_0}(x)) =
c(x) = 1$. As in~\eqref{eq:adjusting normalisers}, the element
$a := b a_0 c$ is a normaliser of $B$, and
\[
U := \{y \in \widehat{B} : aa^*(y) > 0\} \subset U_0\quad\text{ and }
V := \{y \in \widehat{B} : a^*a(y) > 0\} \subset V_0.
\]
Hence $[a,x] = [a_0,x]$, so $W := \{q([a,y]) : y \in V\}$ is an
open neighbourhood of $[a,x]$ (see
\cite[page~982]{Kumjian1986}). Since $\psi(\sigma_a(y)) =
\psi(y)$ for all $y$, and since $\psi$ is injective on $U$ and
$V$, we have $\Upsilon(W) = U *_\psi V = (U \times V) \cap
R(\psi)$, and hence $\Upsilon(W)$ is open in the relative topology. So each
$\alpha \in R$ has a neighbourhood $W$ such that $\Upsilon(W)$
is open, and it follows that $\Upsilon$ is open.
\end{proof}

The following is a rewording of
\cite[Definition~5.5]{Kumjian1986}.
\begin{defn}\label{dfn:twists equivalent}
Twists $\Gamma_i \to R_i\ (i=1,2)$ are \emph{equivalent} if
there exist a twist $\Gamma \to R$ and maps $\iota_i : R_i \to
R$ such that
\begin{enumerate}
\item each $U_i := \iota_i(R^{(0)}_i)$ is a full (see page~\ref{def-full}) and open
    subset of $R^{(0)}$,
\item $R^{(0)} = U_1 \sqcup U_2$, and
\item each $\iota_i$ is an isomorphism onto $U_i R U_i$ and
    the pullback $\iota_i^*(\Gamma)$ is isomorphic to
    $\Gamma_i$.
\end{enumerate}
We call  $\Gamma\to R$ a \emph{linking twist}.
\end{defn}

The following lemma will be used in the proof of
Theorem~\ref{thm-main}  below.

\begin{lemma}\label{lem-twists}
Let $(C_1, D_1)$ and $(C_2, D_2)$ be diagonal pairs and suppose
that $C_1$ and $C_2$ are separable Fell algebras. Then $C_1$
and $C_2$ are Morita equivalent if and only if the associated
twists obtained from Theorem~\ref{thm-alex} are equivalent.
\end{lemma}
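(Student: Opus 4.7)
The plan is to exploit the linking-algebra/linking-twist correspondence, which should match Morita equivalence of $C^*$-algebras with equivalence of twists. For the ``if'' direction, suppose the twists are equivalent via a linking twist $\Gamma \to R$ with $R^{(0)} = U_1 \sqcup U_2$ and each $U_i$ full. I would identify $\tgcsa{\Gamma_i}{R_i}$ with the hereditary subalgebra $\overline{C_0(U_i)\, \tgcsa{\Gamma}{R}\, C_0(U_i)}$ of $\tgcsa{\Gamma}{R}$ obtained by cutting down by $C_0(U_i) \subset C_0(R^{(0)}) \subset M(\tgcsa{\Gamma}{R})$; fullness of $U_i$ in $R^{(0)}$ then translates directly into fullness of this hereditary subalgebra. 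Hence both $\tgcsa{\Gamma_i}{R_i}$ embed as full hereditary subalgebras of the same $C^*$-algebra and are therefore mutually Morita equivalent, giving $C_1 \Me C_2$ via Theorem~\ref{thm-alex}.

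For the ``only if'' direction, given a $C_1$--$C_2$-imprimitivity bimodule $X$, I would form the linking algebra $L = L(X)$, which carries complementary full projections $p_1, p_2 \in M(L)$ satisfying $p_i L p_i = C_i$. Since $L$ is Morita equivalent to $C_1$, it is a separable Fell algebra by \cite{aHRW}, and $B := D_1 \oplus D_2 \subset C_1 \oplus C_2 \subset L$ is abelian. The crucial step is to show that $(L, B)$ is a diagonal pair, which by Theorem~\ref{thm:ext prop => diag} reduces to verifying the extension property. A joint approximate identity for $D_1$ and $D_2$ is an approximate identity for $C_1 \oplus C_2$, and by nondegeneracy of the bimodule actions it is also one for $L$. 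For uniqueness of pure-state extensions, I would observe that each pure state of $B$ vanishes on one summand; if $x \in \widehat{D_1}$ is given, then any state of $L$ extending $x$ vanishes on an approximate identity for $D_2$, hence on $C_2 = p_2 L p_2$, and then by Cauchy--Schwarz vanishes on $X$ and $X^*$. Such a state is therefore determined by its restriction to $C_1$, where the extension property for $(C_1, D_1)$ pins it down.

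Applying Theorem~\ref{thm-alex} to $(L, B)$ then produces a twist $\Gamma \to R$ with $\tgcsa{\Gamma}{R} \cong L$ and $R^{(0)} = U_1 \sqcup U_2$ for $U_i := \widehat{D_i}$. Fullness of each $U_i$ in $R^{(0)}$ would follow from Proposition~\ref{thm-equivalencerelationoftwist}: identifying $R$ with $R(\psi)$ for the spectral map $\psi : \widehat{B} \to \widehat{L}$ turns $R$-orbits into fibres of $\psi$, and since each $C_i$ is a full corner of $L$, restriction yields a bijection $\widehat{L} \to \widehat{C_i}$; composition with the spectral map of $(C_i, D_i)$ then shows $\psi|_{U_i}$ is surjective, so every fibre meets $U_i$. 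It then remains to verify that restricting the construction of $\Gamma$ recalled on page~\pageref{nonunital} to normalisers lying in $C_i$ identifies $U_i R U_i$ with $R_i$ and the pullback $\iota_i^* \Gamma$ with $\Gamma_i$.

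I expect the main obstacle to be this last identification: one must check that the normalisers of $B$ in $L$ whose range and source lie in $U_i$ correspond exactly to the normalisers of $D_i$ in $C_i$, and that the equivalence relation $[a,x] = [c,x]$ computed in $L$ agrees with the one computed in $C_i$ when $a, c \in N(D_i)$. This should follow from the fact that the unique conditional expectation $P_L : L \to B$ restricts on each $C_i$ to $P_{C_i}$, which is forced by the uniqueness of the expectation once the extension property is in hand.
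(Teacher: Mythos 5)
Your proposal is correct and, at its core, follows the same route as the paper: form the linking algebra $L$ of a $C_1$--$C_2$-imprimitivity bimodule, show that $B=D_1\oplus D_2$ has the extension property relative to $L$ (which by Theorem~\ref{thm:ext prop => diag} makes $(L,B)$ a diagonal pair, $L$ being a separable Fell algebra by Morita invariance \cite{aHRW}), and then pass to twists. Your extension-property argument is the by-hand version of the paper's: where you kill $C_2$, $X$ and $X^*$ with Cauchy--Schwarz against an approximate identity of $D_2$ and then invoke the extension property of $(C_1,D_1)$ on the remaining corner, the paper notes that the pure state of $D_i$ extends uniquely to $C_i=q_iLq_i$ and then cites unique state extension from hereditary subalgebras \cite[Proposition~3.1.6]{Ped}; these are the same argument. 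The real difference is in the bookkeeping you do afterwards. The paper disposes of both the ``if'' direction and the passage from ``$D_1\oplus D_2$ is diagonal in $L$'' to ``the twists of Theorem~\ref{thm-alex} are equivalent'' by citing \cite[Proposition~5.4]{Kumjian1986}, which is exactly the content you re-derive: your cut-down-by-$C_0(U_i)$ argument for the ``if'' direction, and the identification of $U_iRU_i$ with $R_i$ and $\iota_i^*\Gamma$ with $\Gamma_i$ that you flag as the ``main obstacle''. That obstacle is genuine work but does go through along the lines you indicate: normalisers of $D_i$ in $C_i$ are automatically normalisers of $B$ in $L$ (since $m=q_imq_i$ annihilates $D_{3-i}$), any $[n,x]$ with range and source in $\widehat{D}_i$ can be replaced by $[d'nd,x]$ with $d,d'\in D_i$ chosen positive at $x$ and $\sigma_n(x)$, and the germ relation and topology agree because the unique expectation $P_L$ restricts to $P_{C_i}$ on $C_i$ (forced by uniqueness of state extensions), so the criteria $y(a^*c)>0$ computed in $L$ and in $C_i$ coincide; your fullness argument via Proposition~\ref{thm-equivalencerelationoftwist} and surjectivity of the spectral maps is also correct. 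In short, nothing in your plan would fail, but you could shorten it considerably -- as the paper does -- by quoting \cite[Proposition~5.4]{Kumjian1986} for everything beyond the extension-property verification.
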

\begin{proof}
For the ``only if" implication, let $X$ be a
$C_1$--$C_2$-imprimitivity bimodule, and let $L$ be the
associated linking algebra. Let $q_1, q_2 \in M(L)$ be the
multiplier projections such that $q_i L q_i \cong C_i$ and $q_1
L q_2 \cong X$, and identify the $C_i$ and $X$ with subsets of
$L$ under these isomorphisms. By
\cite[Proposition~5.4]{Kumjian1986}, it suffices to show that $D
:= D_1 \oplus D_2$ is a diagonal in $L$.  Since $L$ is a Fell
algebra, by Theorem~\ref{thm:ext prop => diag} it suffices to
show that $D$ has the extension property relative to $L$. Let
$x$ be a pure state of $D$. Since $\widehat{D} = \widehat{D}_1
\sqcup \widehat{D}_2$, $x$ is a pure state of $D_i$ for either
$i=1$ or $i =2$ (but not both) and thus extends uniquely to a
pure state of $C_i = q_i L q_i$ because $(C_i, D_i)$ is a
diagonal pair. Since all states extend uniquely from hereditary
subalgebras \cite[Proposition 3.1.6]{Ped}, $x$ has a unique
extension to $L$, so $D$ has the extension property relative to
$L$ as required.

The ``if'' implication is \cite[Proposition~5.4]{Kumjian1986}.
\end{proof}


\begin{thm}\label{thm-main}
\begin{enumerate}
\item  Suppose that $A$ is a separable Fell algebra.  Then
    there exists a locally compact, Hausdorff space $Y$, a
    local homeomorphism $\psi:Y\to\widehat A$ and a
    $\TT$-groupoid $\Gamma$ such that
    \[
    Y\to Y\times\TT\to\Gamma\to R(\psi)
    \]
is a twist, and the twisted groupoid  $C^*$-algebra
\tgcsa{\Gamma}{R(\psi)}  is Morita equivalent to  $A$.
Moreover, any two such twists are equivalent.
\item Let $Y$ be a locally compact, Hausdorff space, $X$ a
    locally compact, locally Hausdorff space, and $\psi:Y\to
    X$ a local homeomorphism. Let $Y\to Y \times \TT \to
    \Gamma \to R(\psi)$ be a twist such that $\Gamma$ is
    second-countable. Then $A := \tgcsa{\Gamma}{R(\psi)}$ is
    a Fell algebra. Let $B := C_0(Y)$, and identify $B$ with
    a subalgebra of $A$ with conditional expectation $P : A
    \to B$ as on page~\pageref{pg:tgcsa outline}. Then there
    is a homeomorphism $h : X \to \widehat{A}$ such that $h
    \circ \psi =  \psi_P$.
\end{enumerate}
\end{thm}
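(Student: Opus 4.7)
The strategy is to reduce to the diagonal case and then invoke the twist-construction machinery. First I would apply Theorem~\ref{prp:construct diagonal} to produce a separable Fell algebra $C$ Morita equivalent to $A$ together with a diagonal subalgebra $D$. Applying Theorem~\ref{thm-alex} to the diagonal pair $(C,D)$ yields a twist $\Gamma_0 \to R_0$, a homeomorphism $\widehat{D} \cong \Gamma_0^{(0)}$, and an isomorphism $C \cong \tgcsa{\Gamma_0}{R_0}$ carrying $D$ to $C_0(\Gamma_0^{(0)})$. Set $Y := \Gamma_0^{(0)} \cong \widehat{D}$. By Theorem~\ref{thm:ext prop => diag}(1) the spectral map $\psi_P\colon Y \to \widehat{C}$ is a local homeomorphism, and by Proposition~\ref{thm-equivalencerelationoftwist} the map $\alpha \mapsto (r(\alpha),s(\alpha))$ is a topological groupoid isomorphism $R_0 \cong R(\psi_P)$. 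Transporting $\Gamma_0$ across this isomorphism produces a twist $\Gamma \to R(\psi_P)$ with $\tgcsa{\Gamma}{R(\psi_P)} \cong C$; identifying $\widehat{C} \cong \widehat{A}$ via the Rieffel correspondence yields the desired $\psi \colon Y \to \widehat A$. For uniqueness, two such twists give rise to diagonal pairs whose enveloping $C^*$-algebras are each Morita equivalent to $A$, hence to each other; Lemma~\ref{lem-twists} then shows the twists are equivalent.

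\textbf{Plan for part (2), Fell property.} The pair $(A,B) = (\tgcsa{\Gamma}{R(\psi)}, C_0(Y))$ is a diagonal pair (as noted in the introduction and underlying the conditional expectation $P$), so by Theorem~\ref{thm-fell} it suffices to produce enough abelian elements. Fix $y \in Y$ and, using that $\psi$ is a local homeomorphism, pick an open neighbourhood $U$ of $y$ on which $\psi$ is injective; then $(U \times U) \cap R(\psi) \subset R(\psi)^{(0)}$. For any $f \in C_c(U)^+$ with $f(y) > 0$, the subalgebra $fAf$ is contained in the closed span of $\TT$-equivariant continuous functions on $\Gamma$ whose image in $R(\psi)$ lies in $R(\psi)^{(0)}$, and this span is exactly $C_0(Y)$. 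Hence $fAf \subset C_0(Y)$ is commutative and $f$ is abelian. Since $B$ contains an approximate identity for $A$ and such $f$'s generate $B$, every irreducible representation of $A$ is non-zero on some abelian element, so by Lemma~\ref{lem-elementary?} every irreducible representation satisfies Fell's condition.

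\textbf{Plan for part (2), the homeomorphism $h$.} Applying Proposition~\ref{thm-equivalencerelationoftwist} to the diagonal pair $(A,B)$ and invoking the uniqueness-up-to-isomorphism of the Kumjian reconstruction, the isomorphism $R \cong R(\psi_P)$ coming from the diagonal pair agrees, after identifying unit spaces, with the given $R(\psi)$; in particular $R(\psi) = R(\psi_P)$ as subsets of $Y \times Y$. Consequently $\psi(y_1) = \psi(y_2) \iff \psi_P(y_1) = \psi_P(y_2)$, so the formula $h(\psi(y)) := \psi_P(y)$ defines a well-defined injection $h$ on $\psi(Y)$ (and as $\psi$ is surjective onto $X$ in the relevant setting, or after replacing $X$ by $\psi(Y)$, this is defined on all of $X$). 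Since $A$ is liminary (being Fell), Lemma~\ref{lem:D-points} gives $\psi_P(Y) = \widehat{A}$, so $h$ is surjective. The identity $h \circ \psi = \psi_P$ holds by construction; continuity of $h$ and $h^{-1}$ follows from the fact that both $\psi$ and $\psi_P$ are local homeomorphisms — one checks that about any $y \in Y$ one can shrink a neighbourhood on which both are homeomorphisms, so that $h$ coincides locally with $\psi_P \circ (\psi|_U)^{-1}$ and $h^{-1}$ coincides locally with $\psi \circ (\psi_P|_U)^{-1}$.

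\textbf{Main obstacle.} The delicate point is matching $R(\psi)$ with $R(\psi_P)$: the given data is a twist over $R(\psi)$, while the Kumjian construction applied to $(A,B)$ intrinsically produces a twist over $R(\psi_P)$. One must argue that these two relations coincide as subgroupoids of $Y \times Y$, not merely abstractly. The cleanest route is to cite Theorem~\ref{thm-alex} together with Proposition~\ref{thm-equivalencerelationoftwist}, but careful bookkeeping is needed to verify that the identification of unit spaces is the identity on $Y$.
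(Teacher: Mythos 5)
Your plan follows the paper's proof essentially step for step: part (1) via Theorem~\ref{prp:construct diagonal}, Theorem~\ref{thm-alex}, Proposition~\ref{thm-equivalencerelationoftwist} and Lemma~\ref{lem-twists} for uniqueness, and part (2) via the same abelian elements supported on sets where $\psi$ is injective, followed by the identification $R(\psi)=R(\psi_P)$ to define $h$, with surjectivity from Lemma~\ref{lem:D-points} and continuity/openness from both maps being local homeomorphisms. The only cosmetic differences are that you conclude the Fell property directly from Lemma~\ref{lem-elementary?} whereas the paper passes through the ideals $\overline{Af_yA}$ and Theorem~\ref{thm-fell}, and the unit-space bookkeeping you flag as the main obstacle is handled in the paper exactly as you propose, by citing Proposition~\ref{thm-equivalencerelationoftwist}.
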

\begin{proof}
(1) Let $A$ be a Fell algebra.  By Theorem~\ref{prp:construct
diagonal} there exists a diagonal pair $(C,D)$ such that $C$ is
Morita equivalent to $A$. Let $Y = \widehat{D}$. By
Theorem~\ref{thm-alex} there is a twist $Y\to
Y\times\TT\to\Gamma\to R$  such that $C$ is isomorphic to
\tgcsa{\Gamma}{R} via an isomorphism which carries $D$ to
$C_0(\Gamma^{(0)})$. By
Proposition~\ref{thm-equivalencerelationoftwist}, $R \cong
R(\psi)$, where $\psi:Y\to \widehat C$ is the spectral map.
Hence $A$ is Morita equivalent to \tgcsa{\Gamma}{R(\psi)}.

Now suppose that $Y'\to Y'\times\TT\to\Gamma'\to R(\psi')$ is
another twist such that $A$ is Morita equivalent to
$\tgcsa{\Gamma'}{R(\psi')})$. Let $(C_1, D_1) =
(\tgcsa{\Gamma}{R(\psi)}), C_0(Y))$ and $(C_2, D_2) =
(\tgcsa{\Gamma'}{R(\psi')}), C_0(Y'))$. Then each $C_i$ is
Morita equivalent to $A$. So Lemma~\ref{lem-twists} implies that
the twists are equivalent.

(2)  The pair $(A, B)$  satisfies (D2)~and~(D3) by Theorem~2.9
of~\cite{Kumjian1986} and that it satisfies~(D1) is shown in the
appendix, so $(A,B)$ is a diagonal pair.  We will show that for
each $y \in Y$ there exists $f_y \in C_0(Y)$ such that $f_y$ is
abelian in $A$ and $y(f_y) > 0$,  and then use
Theorem~\ref{thm-fell} to see that $A$ is a Fell algebra.

Fix $y\in Y$. There exists a neighbourhood $U$ of $y$ in $Y$
such that $\psi|_U$ is injective. Let $f_y\in C_c(Y)$ be a
positive element of $A$ with support contained in $U$ such that
$f_y(y)\neq 0$. To see that $f_ygf_y \in C_c(Y)$ for any $g$
in the dense subalgebra  $C_c(\Gamma;R)$ of $A$, we identify
$f_y$, $g$ with sections of the complex line bundle $L$ over
$R$ as outlined in the Appendix. Note that ${f_y}$ has support
in $R^{(0)}$. A straightforward calculation yields
\[
    {f_ygf_y}(\rho)={f_y}(r(\rho))g(\rho){f_y}(s(\rho))
\]
for $\rho\in R(\psi)$. Now let $\rho=(y_1,y_2)\in \supp
f_ygf_y$. Then $\psi(y_1)=\psi(y_2)$ and
$y_1,y_2\in\supp f_y\subset U$ gives $y_1=y_2$, so $\rho$
is a unit.  Thus $f_ygf_y \in C_c(Y)$.  In particular, the
hereditary subalgebra $\overline{f_yAf_y}$ is contained in
$C_0(Y)$, hence is abelian.  Thus $f_y$ is abelian in $A$ and
$y(f_y) > 0$ as claimed.

For each $y \in Y$, set $I_y := \overline{A f_y A}$. Then each
$I_y$ is Morita equivalent to the abelian algebra
$\overline{f_y A f_y}$ by Lemma~\ref{lem-morita}. Let $J$ be
the ideal of $A$ generated by the $I_y$ (this ideal is also the
$C^*$-subalgebra of $A$ generated by the $I_y)$, and let $I = J
\cap C_0(Y)$, so $I$ is an ideal of $C_0(Y)$. Then $I$ is the
set of functions vanishing on some closed subset $K_I$ of $Y$.
But for each $y \in Y$, we have $f_y \in I$ and $y(f_y)\neq 0$.
Hence $K_I = \emptyset$, that is $I = C_0(Y)$.  In particular,
$C_0(Y)\subset J$. Since $C_0(Y)$ is diagonal in $A$, it
contains an approximate identity for $A$, so $A$ is generated
as a $C^*$-algebra by the $I_y$. Theorem~\ref{thm-fell} now
implies that $A$ is a Fell algebra.

It remains to prove that there is a homeomorphism $h : X \to
\widehat{A}$ such that $h \circ \psi =  \psi_P$.  We have
$R(\psi) \cong R(\psi_P)$ by
Proposition~\ref{thm-equivalencerelationoftwist}. Given $y,y'\in
Y$, we have
\[
  y,y'\in \psi^{-1}(\{x\}) \iff
 (y,y')\in R(\psi)=R(\psi_P) \iff
 \psi_P(y)=\psi_P(y').
\]
It follows that the assignment
\[
x\mapsto \psi_P(y)\quad\text{where $y\in\psi^{-1}(\{x\})$}
\]
gives a well-defined injective function $h:X\to\widehat A$ such
that $h\circ\psi=\psi_P$. Since $A$ is liminal $\psi_P$ is
surjective by Lemma~\ref{lem:D-points}, so $h\circ\psi=\psi_P$
implies $h$ is surjective.  Moreover, that $h\circ\psi=\psi_P$
and that $\psi$, $\psi_P$ are local homeomorphisms implies that
$h$ is continuous and open. Thus $h$ is a homeomorphism.
\end{proof}

\section{A Dixmier-Douady theorem for Fell algebras}\label{sec:DD}

Recall that a \emph{sheaf} of abelian groups over a topological
space $X$ is a pair $(B,\pi)$ where $B$ is a topological space
and $\pi:B\to X$ is a local homeomorphism such that for each
$x\in X$ the fibre $B_x:=\pi^{-1}(\{x\})$ is an abelian group.
Of particular importance are the constant sheaf
$\underline\ZZ_X$ over $X$ whose every fibre is $\ZZ$, and the
sheaf $\Tgerms_X$ of germs of continuous $\TT$-valued functions
on $X$ (see Notation~\ref{ntn:Tgerms} for details). When the
base-space $X$ is clear from context, we will often suppress
the subscript, and denote these $\underline\ZZ$ and $\Tgerms$
respectively.

Our strategy for defining an analogue of the Dixmier-Douady
invariant for a Fell algebra $A$ is as follows. We first choose
a twist $\Gamma \to R$ whose $C^*$-algebra is Morita equivalent
to $A$. The results of \cite{Kumjian1988} show that $\Gamma$
determines an element of a \emph{twist group} associated to $R$
and that this in turn determines an element of the second
equivariant-sheaf cohomology group $H^2(R, \Tgerms)$. We show
that $H^2(R, \Tgerms) \cong H^2(\widehat{A}, \Tgerms)$ to
obtain an element $\delta(A)$ of $H^2(\widehat{A}, \Tgerms)$
which we regard as an analogue of the Dixmier-Douady invariant
for $A$. The bulk of the work in the section goes towards
proving that this assignment does not depend on our choice of
twist $\Gamma \to R$.

We recall \cite[Definition~0.6]{Kumjian1988}. Let $G$ be an
\'etale groupoid and $B$ a sheaf over $G^{(0)}$.  An
\emph{action} of $G$ on $B$  is a continuous map
$\alpha :G*B:=\{(\gamma,b):\gamma\in G,b\in B_{s(\gamma)}\}\to B$,
$(\gamma,b) \mapsto \alpha_\gamma(b)$ such
that each $\alpha_\gamma : B(s(\gamma))\to B(r(\gamma))$ is an
isomorphism of  abelian groups and
$\alpha_{\gamma_1\gamma_2}=\alpha_{\gamma_1}\circ\alpha_{\gamma_2}$
when $(\gamma_1,\gamma_2)\in G^{(2)}$. It is common practice to
suppress the $\alpha$ and write $\gamma b$ for
$\alpha_\gamma(b)$, and we shall do so henceforth.  A sheaf $B$
over $G^{(0)}$ with such an action is called a
\emph{$G$-sheaf}. A \emph{$G$-sheaf morphism} $f:B_1\to B_2$ is
a sheaf morphism such that $f(\gamma b)=\gamma f(b)$ for
$\gamma\in G$ and $b\in B$. We will frequently regard the
sheaves $\underline\ZZ_{G^{(0)}}$ and $\Tgerms_{G^{(0)}}$ as
$G$-sheaves with trivial action.

Fix a topological groupoid $G$, a locally compact, Hausdorff
space $Y$, and a continuous open surjection $\psi : Y \to
G^{(0)}$. As in \cite[\S0.5]{Kumjian1988}, we may construct a
groupoid $G^\psi$ with unit space $(G^\psi) ^{(0)} = Y$ as
follows:
\[
G^\psi := \{(x,g,y) : x,y \in Y, g \in G, \psi(x)= r(g)\text{ and } \psi(y) = s(g)\},
\]
with structure maps
\begin{gather*}
r(x,g,y) = x,\quad s(x,g,y) = y,\quad (x,g,y)^{-1} =
(y,g^{-1},x),\quad\text{ and} \\
(x,g,y)(y,h,z) = (x,gh,z),
\end{gather*}
and with the relative topology inherited from the product
topology on $Y \times G \times Y$. We identify $Y$ with the
unit space $(G^\psi) ^{(0)}$  via the map $x \mapsto (x,
\psi(x), x)$. There is then a groupoid homomorphism $\pi_\psi :
G^\psi \to G$ given by
\begin{equation}\label{eq:pi_psi def}
\pi_\psi(x,g,y) = g.
\end{equation}
For the next result, recall the definition of a groupoid
equivalence from Definition~\ref{pg-equivalence}.

\begin{lemma}\label{lem:pullback groupoids equiv}
Let $G_1$ and $G_2$ be second-countable, locally compact,
Hausdorff groupoids, and let $(Z,\rho,\sigma)$ be an
equivalence from $G_1$ to $G_2$. Then for each $(x,g,y) \in
G_1^\rho$, there exists a unique element $\omega(x,g,y) \in
G_2$ such that
\begin{equation*}\label{eq:theta def}
x \cdot \omega(x,g,y) = g \cdot y.
\end{equation*}
Moreover, the map $\omega$ is a homomorphism from $G_1^\rho$ to
$G_2$, and $\Omega_{\rho,\sigma} : (x,g,y) \mapsto (x,
\omega(x,g,y), y)$ is an isomorphism from $G_1^\rho$ to
$G_2^\sigma$.
\end{lemma}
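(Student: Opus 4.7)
The plan is to construct $\omega(x,g,y)$ pointwise from the equivalence data, then upgrade to continuity using properness of the actions, and finally check the homomorphism and isomorphism claims by direct manipulation of the defining equation.

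For the pointwise construction, given $(x,g,y) \in G_1^\rho$, I would compute $\rho(g\cdot y) = r(g) = \rho(x)$, which by Definition~\ref{dfn:groupoid equivalence}(3) means $x$ and $g \cdot y$ lie in a common $G_2$-orbit; freeness of the right action then supplies a unique $\omega(x,g,y) \in G_2$ with $x \cdot \omega(x,g,y) = g\cdot y$. Solvability of this equation forces $\sigma(x) = r(\omega(x,g,y))$, and $G_1$-invariance of $\sigma$ gives $\sigma(y) = \sigma(g\cdot y) = s(\omega(x,g,y))$, so $(x, \omega(x,g,y), y)$ really does lie in $G_2^\sigma$.

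For continuity, the key tool is the map
\[
\Phi : Z \fibredprod{\sigma}{r} G_2 \to Z \times Z, \qquad (z, h) \mapsto (z, z\cdot h).
\]
Freeness of the $G_2$-action makes $\Phi$ injective, properness (combined with the swap $Z\times Z \to Z\times Z$) makes it proper, and Definition~\ref{dfn:groupoid equivalence}(3) identifies its image with $Z \fibredprod{\rho}{\rho} Z$. A proper continuous injection between locally compact Hausdorff spaces is a homeomorphism onto its (closed) image, so $\Phi^{-1}$ is continuous there. Since the assignment $(x,g,y) \mapsto (x, g\cdot y)$ is a continuous map from $G_1^\rho$ into this image, composing with $\Phi^{-1}$ and projecting onto the $G_2$-coordinate yields continuity of $\omega$, and hence of $\Omega_{\rho,\sigma}$.

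For the homomorphism property I would use that the two actions on $Z$ commute: if $(x,g,y)$ and $(y,g',z)$ are composable in $G_1^\rho$ then
\[
x \cdot \bigl(\omega(x,g,y)\,\omega(y,g',z)\bigr) = (g\cdot y)\cdot \omega(y,g',z) = g\cdot(g'\cdot z) = (gg')\cdot z,
\]
so uniqueness forces $\omega(x, gg', z) = \omega(x,g,y)\,\omega(y,g',z)$. The inverse of $\Omega_{\rho,\sigma}$ is obtained by symmetry: swapping the roles of $(G_1,\rho)$ and $(G_2,\sigma)$ yields a continuous homomorphism $\Omega_{\sigma,\rho} : G_2^\sigma \to G_1^\rho$, and the uniqueness built into each construction makes the two mutually inverse (applying $\Omega_{\sigma,\rho}$ to $(x,\omega(x,g,y),y)$ asks for the unique $g' \in G_1$ with $g'\cdot y = x \cdot \omega(x,g,y) = g\cdot y$, which is $g$). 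The only genuine technical point is the continuity step; everything else is a short manipulation of the defining equation $x \cdot \omega = g \cdot y$.
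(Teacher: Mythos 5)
Your proposal is correct and follows essentially the same route as the paper: existence and uniqueness of $\omega(x,g,y)$ from Definition~\ref{dfn:groupoid equivalence}(3) together with freeness of the $G_2$-action, the identities $\sigma(x)=r(\omega(x,g,y))$ and $\sigma(y)=s(\omega(x,g,y))$ via $G_1$-invariance of $\sigma$, the homomorphism property by the same uniqueness computation, and the inverse by swapping the r\^oles of $(G_1,\rho)$ and $(G_2,\sigma)$. The only difference is that where the paper simply asserts continuity of $\Omega_{\rho,\sigma}$ from continuity of the structure maps on $Z$, you supply the standard justification (properness plus freeness make $(z,h)\mapsto(z,z\cdot h)$ a homeomorphism onto the closed set $Z\fibredprod{\rho}{\rho}Z$), which is a welcome filling-in of detail rather than a different argument.
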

\begin{proof}
Fix $(x,g,y) \in G_1^\rho$. Then $\rho(x) = r(g) = \rho(g\cdot
y)$. Since $\rho$ induces to a bijection from $Z/G_2$ to
$G_1^{(0)}$, it follows that $x$ and $g\cdot y$ belong to the
same $G_2$-coset. Since $Z$ is a principal $G_2$-space, there
exists a unique element $\omega(x,g,y) \in G_2$ such that
$\sigma(x) = r(\omega(x,g,y))$ and $x \cdot \omega(x,g,y) = g
\cdot y$.

Since $\sigma$ is $G_1$-invariant, we have $\sigma(y) =
\sigma(g \cdot y) = \sigma(x \cdot \omega(x,g,y))$. In
particular, $\sigma(y) = s(\omega(x,g,y))$, and hence $(x,
\omega(x,g,y), y) \in G_2^\sigma$. An argument symmetric to
that of the preceding paragraph shows that $g$ is uniquely
determined by $\omega(x,g,y)$ and the formula $x \cdot
\omega(x,g,y) = g \cdot y$. Hence $\Omega_{\rho,\sigma}$ is a
bijection.

To see that $\omega$ is a homomorphism, we first check that it
maps units to units and that it intertwines the range and source
maps. This will imply that $\omega$ maps composable pairs to
composable pairs.  Let $(x, \rho(x), x) \in (G_1^\rho)^{(0)}$.
Since $x \cdot \sigma(x) = \rho(x) \cdot x$ we have $\omega(x,
\rho(x), x)  = \sigma(x)$; so $\omega$ preserves units. For
$(x,g,y) \in G_1^\rho$, we see as above that $r(\omega(x,g,y)) =
\sigma(x)$ and $s(\omega(x,g,y)) = \sigma(y)$. Thus, $\omega$
maps composable pairs to composable pairs.  Now let  $(x,g,y),
(y,h,z) \in G_1^\rho$ be a composable pair; then
\[
x \cdot \omega(x,g,y)\omega(y,h,z) = g \cdot y \cdot \omega(y,h,z) = g\cdot(h \cdot z) = (gh)\cdot z,
\]
so the uniqueness assertion of the first paragraph implies that
\begin{equation*}\label{eq:mpctn of omegas}
\omega(x,g,y)\omega(y,h,z) = \omega(x,gh,z) = \omega\big((x,g,y)(y,h,z)\big).
\end{equation*}
Hence,  $\omega$ is a homomorphism.

It is immediate that $\Omega_{\rho,\sigma}$ preserves composable
pairs. So to see that $\Omega_{\rho,\sigma}$ is also a
homomorphism, we calculate
\begin{align*}
\Omega_{\rho,\sigma}(x,g,y)\Omega_{\rho,\sigma}(y,h,z)
    &= (x, \omega(x,g,y), y)(y, \omega(y,h,z), z) \\
    &= (x, \omega(x,g,y)\omega(y,h,z), z)
    = \Omega_{\rho,\sigma}(x, gh, z).
\end{align*}
The map $\Omega_{\rho,\sigma}$ is continuous because the
structure maps on the groupoid equivalence $Z$ are continuous.
Reversing the r\^oles of $G_1$ and $G_2$ in the above yields a
continuous inverse $\Omega_{\rho,\sigma}^{-1} =
\Omega_{\sigma,\rho}$, so $\Omega_{\rho,\sigma}$ is a
homeomorphism.
\end{proof}

Let $\psi : Y \to G^{(0)}$ be a local homeomorphism as before,
and let $\pi_\psi^*$ be the pullback functor from the category
$\Sheaf{G}$ of $G$-sheaves to the category $\Sheaf{G^\psi}$ of
$G^\psi$-sheaves. So
\[
    \pi_\psi^*(B) = \{(y,b) : y \in Y, b \in B_{\psi(y)}\},
\]
and for a morphism $f : B_1 \to B_2$ of $G$-sheaves,
$\pi_\psi^*(f)(y,b) = (y,f(b))$. Let $R(\psi)$ be the
equivalence relation on $Y$ induced by $\psi$. We may regard
$R(\psi)$ as a subgroupoid of $G^\psi$ by identifying it with
$\{(x, \unit{\psi(x)}, y) : (x,y) \in R(\psi)\}$. Hence, for a
$G^\psi$-sheaf $B$ the action of $G^\psi$ on $B$ restricts to
an action of $R(\psi)$ on $B$.

By\label{pg:pi_psi,F^psi} \cite[Theorem~0.9]{Kumjian1988},
$\pi_\psi^*$ is a category equivalence between $\Sheaf{G}$ and
$\Sheaf{G^\psi}$. Indeed, the proof of
\cite[Theorem~0.9]{Kumjian1988} shows that the ``inverse"
functor $F^\psi$ is defined as follows. For a $G^\psi$-sheaf
$B$, $F^\psi(B)$ is the quotient sheaf $B/R(\psi) \in
\Sheaf{G}$. Since morphisms between $G$-sheaves are equivariant
maps, each morphism $f$ of $G^\psi$-sheaves descends to a
morphism $F^\psi(f)$ of $G$-sheaves. Specifically, $[(y,b)]
\mapsto b$ is a natural isomorphism from
$F^\psi\circ\pi_{\psi}^*$ to $\id_{\Sheaf{G}}$, and $(y, [c])
\mapsto c$ is a natural isomorphism from $\pi_\psi^* \circ
F^\psi$ to $\id_{\Sheaf{G^\psi}}$. Moreover
$\pi_\psi^*(\underline{\ZZ}_{G^{(0)}})$ is isomorphic to
$\underline{\ZZ}_Y$.

\begin{lemma}\label{lem:iUZ=Z}
Let $G$ be a groupoid, and let $U$ be a full open subset of
$G^{(0)}$, and let $\iota_U:UGU\to G$ be the inclusion map. The
functor $\iota_U^* : \Sheaf{G} \to \Sheaf{UGU}$ is an
equivalence of categories such that the $UGU$-sheaves
$\iota_U^*(\underline{\ZZ}_{G^{(0)}})$  and $\underline{\ZZ}_U$
are isomorphic.
\end{lemma}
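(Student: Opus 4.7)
The plan is to construct an explicit quasi-inverse $I_U : \Sheaf{UGU} \to \Sheaf{G}$ to the restriction functor $\iota_U^*$. The underlying geometric fact is that fullness of $U$ makes $UG := \{g \in G : r(g) \in U\}$ into a Morita bibundle implementing the equivalence of $UGU$ and $G$: namely, $UGU$ acts freely and properly on the left of $UG$ by multiplication, with moment map $r : UG \to U$; $G$ acts freely and properly on the right by multiplication, with moment map $s : UG \to G^{(0)}$; and fullness yields the identifications $UG/G \cong U$ and $UGU \backslash UG \cong G^{(0)}$.

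Given a $UGU$-sheaf $(B, \pi_B)$ over $U$, I would define
\[
I_U(B) := UGU \backslash (UG \fibredprod{r}{\pi_B} B),
\]
where $UGU$ acts by $h \cdot (g, b) := (hg, h \cdot b)$ whenever $s(h) = r(g)$. The source map $s : UG \to G^{(0)}$ is $UGU$-invariant and descends to a projection $I_U(B) \to G^{(0)}$, while the formula $k \cdot [g, b] := [g k^{-1}, b]$ (for $s(k) = s(g)$) defines a continuous left $G$-action by group isomorphisms between fibres. Since $G$ is \'etale (implicit throughout Section~6), $s : UG \to G^{(0)}$ is a local homeomorphism; using bisection charts for $UG$ together with local sections of $\pi_B$, one checks that the quotient projection $I_U(B) \to G^{(0)}$ is still a local homeomorphism, so $I_U(B)$ is a genuine $G$-sheaf, and functoriality of $I_U$ is automatic.

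The natural isomorphisms $\iota_U^* I_U \cong \id_{\Sheaf{UGU}}$ and $I_U \iota_U^* \cong \id_{\Sheaf{G}}$ are then elementary calculations. For $B \in \Sheaf{UGU}$ and $u \in U$, every class $[g, b] \in (\iota_U^* I_U B)_u$ satisfies $s(g) = u \in U$, which forces $g \in UGU$; the assignment $[g, b] \mapsto g^{-1} \cdot b \in B_u$ is then a $UGU$-equivariant bijection with inverse $b \mapsto [u, b]$. For $C \in \Sheaf{G}$ and $x \in G^{(0)}$, the map $[g, c] \mapsto g^{-1} \cdot c$ yields a $G$-equivariant isomorphism $(I_U \iota_U^* C)_x \to C_x$, whose inverse sends $c$ to $[g, g \cdot c]$ for any $g \in UG$ with $s(g) = x$; existence of such $g$ is guaranteed by fullness, and independence of the choice follows from the $UGU$-invariance of the equivalence class.

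For the second assertion, observe that the fibres of $\iota_U^*(\underline{\ZZ}_{G^{(0)}})$ over $u \in U$ are just the fibres of $\underline{\ZZ}_{G^{(0)}}$, each equal to $\ZZ$, and the trivial $G$-action on $\underline{\ZZ}_{G^{(0)}}$ restricts to the trivial $UGU$-action, so $\iota_U^*(\underline{\ZZ}_{G^{(0)}}) \cong \underline{\ZZ}_U$ as $UGU$-sheaves. The main obstacle in this plan is the topological bookkeeping in the construction of $I_U$: verifying that the quotient projection $I_U(B) \to G^{(0)}$ is a local homeomorphism. This reduces, via \'etaleness of $G$ and local bisections, to the freeness and properness of the $UGU$-action on $UG$, which are in turn consequences of fullness and the usual groupoid computations.
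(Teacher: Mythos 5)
Your construction is correct, but it takes a genuinely different route from the paper. The paper never builds the quasi-inverse by hand: it first checks that $GU$ is a $G$--$UGU$ equivalence with fibre maps $\rho = r|_{GU}$, $\sigma = s|_{GU}$, obtains from Lemma~\ref{lem:pullback groupoids equiv} an isomorphism $(UGU)^\sigma \cong G^\rho$, and then invokes the already-established category equivalences $\pi^*_\rho$ and $F^\sigma$ of \cite[Theorem~0.9]{Kumjian1988} to get an equivalence $F^\sigma\Omega^*_{\sigma,\rho}\pi^*_\rho : \Sheaf{G} \to \Sheaf{UGU}$, which it then shows is naturally isomorphic to $\iota_U^*$ by the explicit computation $[u,b] \mapsto (u,b)$; the $\underline{\ZZ}$ statement is handled by the same direct fibre computation you give. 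Your approach instead constructs the inverse ``induction'' functor $I_U(B) = UGU\backslash(UG \fibredprod{r}{\pi_B} B)$ directly along the same Morita bibundle and exhibits the unit and counit isomorphisms explicitly. What the paper's route buys is that all the sheaf-space topology (that the induced object is an \'etale space with continuous action and continuous fibrewise addition) is outsourced to Kumjian's theorem, which is also reused elsewhere in the section; what your route buys is a self-contained and more transparent argument, at the cost of redoing exactly that bookkeeping. Two small points of precision: the local-homeomorphism property of $I_U(B) \to G^{(0)}$ really follows from freeness of the $UGU$-action together with \'etaleness (choose a bisection through $g$ and a section of $\pi_B$; freeness plus injectivity of $s$ on the bisection gives local injectivity of the quotient), not from properness, and fullness is what you need for surjectivity of $s|_{UG}$ onto $G^{(0)}$ (hence essential surjectivity and the counit); and you are right that the \'etale hypothesis, though absent from the statement, is implicit, since the notion of $G$-sheaf from \cite[Definition~0.6]{Kumjian1988} is only defined for \'etale groupoids.
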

\begin{proof}
It is straightforward to verify that $GU$ is a $G$--$UGU$
equivalence under the structure maps $\rho:=r|_{GU}$ and $\sigma:=s|_{GU}$ inherited  from $G$. Hence
Lemma~\ref{lem:pullback groupoids equiv} provides an isomorphism
$\Omega_{\sigma,\rho}$ from $(UGU)^\sigma$ to $G^\rho$, and
hence an equivalence of categories $\Omega^*_{\sigma,\rho}$ from
$\Sheaf{G^\rho\big}$ to $\Sheaf{(UGU)^\sigma}$. Composing with
the category equivalences $\pi^*_\rho$ and $F^\sigma$ discussed
above, we obtain an equivalence of categories $F^\sigma
\Omega_{\sigma,\rho}^* \pi^*_\rho : \Sheaf{G} \to \Sheaf{UGU}$.
We show that $\iota^*_U$ is naturally isomorphic to $F^\sigma
\Omega_{\sigma,\rho}^* \pi^*_\rho$. It then follows that
$\iota^*_U$ is also an equivalence of categories.

Fix $B \in \Sheaf{G}$. Then $ \iota_U^*(B)
    = \{(u,b) : u \in U, b \in B_u\}
$ and
\[
F^\sigma \Omega_{\sigma,\rho}^* \pi^*_\rho(B)
    = F^\sigma (\{(x,b) : x \in GU, b \in B_{\rho(x)}\})
    = \{[x,b] : x \in GU, b \in B_{\rho(x)}\}.
\]
The map $[g] \mapsto \sigma(g)$ from $GU/R(\sigma) \to U$ is a
bijection. It follows that
\[
F^\sigma \Omega_{\sigma,\rho}^* \pi^*_\rho(B)
    = \{[u,b] : u \in U, b \in B_u\},
\]
and that $t_B : [u,b] \mapsto (u,b)$ is an isomorphism from
$F^\sigma \Omega_{\sigma,\rho}^* \pi^*_\rho(B)$ to
$\iota_U^*(B)$. It is routine to see that for a morphism $f$ of
$G$-sheaves, $F^\sigma \Omega_{\sigma,\rho}^*
\pi^*_\rho(f)[u,b] = [u, f(b)]$, and $\iota_U^*(f)(u,b) = (u,
f(b))$, so the family of maps $t_B$ constitute a natural
isomorphism from $F^\sigma \Omega_{\sigma,\rho}^* \pi^*_\rho$
to $\iota_U^*$.

It remains to check that $\iota^*_U(\underline\ZZ_{G^{(0)}})
\cong \underline\ZZ_U$. We have
\begin{align*}
\iota_U^*(\underline{\ZZ}_{G^{(0)}})
    &= \{(x,n,y):x\in U,(n,y)\in\ZZ\times G^{(0)}, \iota_U(x)=y\} \\
    &= \{(x,n,x):x\in U,n\in\ZZ\},
\end{align*}
and the latter is isomorphic to  $\underline{\ZZ}_U$ via
$(x,n,x) \mapsto (n,x)$.
\end{proof}

For the next lemma, we need some notation.

\begin{notation+h}\label{ntn:Tgerms}
Given a topological space $X$, continuous $\TT$-valued
functions $f,g$ defined on open subsets of $X$, and a point $x
\in X$, we write $f \sim_x g$ if there exists an open
neighbourhood $W$ of $x$ with $W \subset \dom(f) \cap \dom(g)$
such that $f|_W = g|_W$. We denote by $[f]^X_x$ the equivalence
class of $f$ under $\sim_x$; this is called the \emph{germ} of
$f$ at $x$. The sheaf $\Tgerms_X$ has fibres
\[
\Tgerms_x := \{[f]^X_x : f \in C(U, \TT)\text{ for some open neighbourhood }U\text{ of }x\},
\]
with group operation $[f]^X_x + [g]^X_x := \big[(f|_{\dom(f)
\cap \dom(g)})(g|_{\dom(f) \cap \dom(g)})\big]^X_x$. For each
open set $U \subset X$ and function $f \in C(U, \TT)$, let
$O^X_{f,U} := \{[f]^X_x : x \in U\}$. The topology on
$\Tgerms_X$ has basis $\{O^X_{f,U} : U \subset X\text{ is
open}, f \in C(U, \TT)\}$. Fix an open subset $U$ of $X$. The
pullback sheaf $\iota^*_U(\Tgerms_X)$ is equal to $\{(u,
[f]^X_u) : u \in U, [f]^X_u \in \Tgerms_X\}$ with the relative
topology inherited from $X \times \Tgerms_X$; we regard
$\iota_U^*(\Tgerms)$ as the restriction of $\Tgerms$ to $U$.
\end{notation+h}

\begin{lemma}\label{lem:iU(T)=T}
Let $X$ and $Y$ be second-countable, locally compact spaces such
that $X$ is locally Hausdorff and $Y$ is Hausdorff. Let $\psi$
be a local homeomorphism from $Y$ onto an open subset of $X$. There is
an isomorphism $\phi : \psi^*(\Tgerms_{X}) \to \Tgerms_{Y}$
determined by $\phi\big(y, [f]^{X}_{\psi(y)}\big) =
[f\circ\psi]^{Y}_y$.

In particular, if $U$ is an open subset of a second-countable,
locally compact, Hausdorff space $X$ with inclusion map $\iota_U
: U \to X$, then there is an isomorphism $\phi :
\iota^*_U(\Tgerms_X) \to \Tgerms_U$ determined by $\phi(u,
[f]^X_u) = [f]^U_u$.
\end{lemma}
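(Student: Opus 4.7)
The plan is to verify directly that the proposed map $\phi$ is a well-defined isomorphism of sheaves over $Y$, and then deduce the ``in particular'' statement as the special case of the inclusion map, which is itself a local homeomorphism onto an open subset.

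First I would check well-definedness and the algebraic structure. Given $(y, [f]^X_{\psi(y)}) \in \psi^*(\Tgerms_X)$, the function $f \circ \psi$ is defined and continuous on the open set $\psi^{-1}(\dom f)$, which contains $y$. If $[f]^X_{\psi(y)} = [g]^X_{\psi(y)}$, then $f$ and $g$ agree on some open neighbourhood $W$ of $\psi(y)$; then $f \circ \psi$ and $g \circ \psi$ agree on the open neighbourhood $\psi^{-1}(W)$ of $y$, so $[f\circ\psi]^Y_y = [g\circ\psi]^Y_y$. The group operation on each fibre of $\psi^*(\Tgerms_X)$ is inherited from $\Tgerms_{X,\psi(y)}$, and since $(fg)\circ\psi = (f\circ\psi)(g\circ\psi)$ pointwise, $\phi$ is a fibrewise group homomorphism. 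Evidently $\phi$ respects the projections to $Y$.

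Next I would construct the inverse using that $\psi$ is a local homeomorphism. Given a germ $[h]^Y_y \in \Tgerms_{Y,y}$ represented by $h \in C(V, \TT)$ for some open $V \ni y$, shrink $V$ to an open set $V'$ containing $y$ such that $\psi|_{V'}$ is a homeomorphism onto the open set $\psi(V') \subset X$. Then $f := h \circ (\psi|_{V'})^{-1} \in C(\psi(V'), \TT)$, and $(y, [f]^X_{\psi(y)})$ is a preimage of $[h]^Y_y$. Independence of the choice of $V'$ is immediate since two such choices yield functions agreeing on the intersection of their domains. This gives fibrewise surjectivity. For fibrewise injectivity, if $[f\circ\psi]^Y_y = [g\circ\psi]^Y_y$, then $f\circ\psi = g\circ\psi$ on some open $W \ni y$; shrink $W$ so that $\psi|_W$ is a homeomorphism onto $\psi(W)$ open in $X$, and then $f = g$ on $\psi(W) \subset \dom f \cap \dom g$, so $[f]^X_{\psi(y)} = [g]^X_{\psi(y)}$.

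The last step is to check that $\phi$ is a homeomorphism. For openness, a basic open set of $\psi^*(\Tgerms_X)$ has the form $\{(y, [f]^X_{\psi(y)}) : y \in W,\ \psi(y) \in U\}$ for some open $W \subset Y$, open $U \subset \dom f \subset X$; its image under $\phi$ is $O^Y_{f\circ\psi, W \cap \psi^{-1}(U)}$, which is a basic open set of $\Tgerms_Y$. For continuity, given a basic open set $O^Y_{g,V}$ and $(y_0, [f]^X_{\psi(y_0)}) \in \phi^{-1}(O^Y_{g,V})$, there is an open $W' \ni y_0$ with $W' \subset V \cap \psi^{-1}(\dom f)$ on which $f \circ \psi = g$; shrink $W'$ to $W$ on which $\psi$ is a homeomorphism onto $\psi(W)$ open in $\dom f$, and then $(W \times O^X_{f,\psi(W)}) \cap \psi^*(\Tgerms_X)$ is an open neighbourhood of $(y_0, [f]^X_{\psi(y_0)})$ mapped into $O^Y_{g,V}$.

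The ``in particular'' statement follows immediately on taking $\psi = \iota_U$, which is a local homeomorphism of the locally compact Hausdorff space $U$ onto the open subset $U$ of $X$. None of these steps presents a genuine obstacle; the one place where care is required is checking continuity/openness, where the local-homeomorphism hypothesis on $\psi$ is used precisely to pass between basic open sets of $\Tgerms_X$ and $\Tgerms_Y$.
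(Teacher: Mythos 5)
Your proposal is correct and follows essentially the same route as the paper's proof: check well-definedness by pulling back the neighbourhood on which representatives agree, establish fibrewise bijectivity by composing with local inverses of $\psi$, and verify that $\phi$ and $\phi^{-1}$ carry basic open sets (of the form $W \ast O^X_{f,U}$ and $O^Y_{g,V}$) to open sets, before specialising to $\iota_U$. The only cosmetic differences are that the paper's injectivity step uses only openness of $\psi$ rather than shrinking to a set where $\psi$ is injective, and it computes $\phi^{-1}$ of basic sets directly rather than arguing continuity pointwise; these are inessential.
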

\begin{proof}
To see that the formula for $\phi$ is well-defined, suppose
that $(y, [f]^X_{\psi(y)}) = (z, [g]^X_{\psi(z)})$. Then $y =
z$, and there exists an open neighbourhood $V$ of $\psi(y)$ in
$X$ such that $f|_V = g|_V$. Let $U := \psi^{-1}(V)$. Then $U$
is an open neighbourhood of $y$, and $(f\circ\psi)|_U =
(g\circ\psi)|_U$ because $f$ and $g$ agree on $\psi(U)$. Hence
$[f\circ\psi]^{Y}_y = [g \circ \psi]^Y_z$.  It is routine to
check that $\phi$ is a sheaf morphism.

For surjectivity, fix an open subset $U \subset Y$, a function
$f \in C(U, \TT)$ and a point $y \in U$. We must show that
$[f]^Y_y$ belongs to the image of $\phi$. Choose a
subneighbourhood $V \subset U$ of $y$ such that $\psi|_V$ is a
homeomorphism, and define $g \in C(\psi(V), \TT)$ by $g := f
\circ (\psi|_V)^{-1}$. By definition,
\[
\phi(y, [g]^X_{\psi(y)})
    = [g \circ \psi]^Y_y
    = [f \circ (\psi|_V)^{-1} \circ \psi]^Y_y
    = [f|_V]^Y_y
    = [f]^Y_y.
\]

For injectivity, suppose that $\phi(y, [f]^X_{\psi(y)}) =
\phi(z, [g]^X_{\psi(z)})$. Then $y = z$, and there is an open $U
\subset X$ such that $\psi(y) \in U$ and $(f \circ \psi)|_U = (g
\circ \psi)|_U$. Since $\psi$ is a local homeomorphism,
$\psi(U)$ is an open neighbourhood of $\psi(y)$ in $X$.
Moreover, for $x \in \psi(U)$, say $x = \psi(z)$, we have $f(x)
= f \circ \psi (z) = g \circ \psi(z) = g(x)$, so $f$ and $g$
agree on $\psi(U)$. Hence $[f]^X_{\psi(y)} = [g]^X_{\psi(y)}$,
so $(y, [f]^X_{\psi(y)}) = (z, [g]^X_{\psi(z)})$, and hence
$\phi$ is injective. To see that $\phi$ is a homeomorphism,
recall that the basic open sets in $\Tgerms_X$ are those of the
form
\[
O^X_{f,U} := \{[f]^X_u : u \in U\}
\]
where $U$ ranges over open subsets of $X$ and $f$ ranges over
continuous $\TT$-valued functions on $U$. Since $\psi$ is a
local homeomorphism, the family of open sets $\{O^Y_{f,V} :
\psi|_V\text{ is a homeomorphism}\}$ is a basis for the
topology on $\Tgerms_Y$. The basic open neighbourhoods in
$\pi_\psi^*(\Tgerms_X)$ are by definition of the form
\[
W \ast O^X_{f,U}
    = (W \times O^X_{f,U}) \cap \pi_\psi^*(\Tgerms_X)
    = \bigcup_{w \in W} \{(w, [f]^X_{\psi(w)}) : \psi(w) \in W\}.
\]
where $W \subset Y$ is open and $O^X_{f,U}$ is a basic
open set in $\Tgerms_X$. We calculate:
\[
\phi(W \ast O^X_{f,U})
    = \bigcup_{w \in W}\{ [f \circ \psi]^Y_w : \psi(w) \in U\} \\
    = O^Y_{f \circ \psi, \psi^{-1}(U) \cap W}.
\]
If $V \subset Y$ and $\psi|_V$ is a homeomorphism, then for $f
\in C(V)$,
\begin{align*}
\phi^{-1}(O^Y_{f,V})
    &= \phi^{-1}(O^Y_{f \circ(\psi|_V)^{-1} \circ \psi, V})
    = \phi^{-1}(\{[f \circ (\psi|_V)^{-1} \circ \psi]^Y_y : y \in V\}) \\
    &= \{[f \circ (\psi|_V)^{-1}]^X_{\psi(y)} : y \in V\}
    = O^X_{f \circ (\psi|_V)^{-1}, \psi(V)},
\end{align*}
which is a basic open set because $\psi$ is open. Hence both
$\phi$ and $\phi^{-1}$ carry basic open sets to basic open sets,
and $\phi$ is a homeomorphism.

For the second statement, apply the first to $\iota_U : U \to
X$.
\end{proof}

Recall from \cite[page~215]{Kumjian1988} that given a groupoid
$G$ and a $G$-sheaf $B$, for each $n \in \NN$, the
$n$\textsuperscript{th} equivariant-cohomology group $H^n(G,B)$
is defined by $H^n(G,B) := \Ext^n_G(\underline{\ZZ}, B)$ (see
\cite{Haefliger76} for an alternative definition of sheaf
cohomology of \'{e}tale groupoids).

\begin{prop}\label{prp:H2 well-defd}
Suppose $G$ is a second-countable, locally compact, Hausdorff, \'etale groupoid, $B$ a $G$-sheaf,
and $U$ a full open subset of $G^{(0)}$. Then the inclusion $\iota_U : UGU \to G$ induces an
isomorphism $\iota_U^* : H^*(G,B) \to H^*(UGU, \iota_U^*(B))$, so in particular an isomorphism
$\iota_U^*:H^2(G,\Tgerms_{G^{(0)}}) \to H^2(UGU,
 \Tgerms_U)$.
\end{prop}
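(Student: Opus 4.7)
The plan is to deduce the proposition directly from the category equivalence $\iota_U^* : \Sheaf{G} \to \Sheaf{UGU}$ established in Lemma~\ref{lem:iUZ=Z}, combined with the definition $H^n(G,B) = \Ext^n_G(\underline{\ZZ}_{G^{(0)}}, B)$. Since the sheaf categories in question are abelian with kernels, cokernels and exact sequences computed stalkwise, and $\iota_U^*$ simply restricts stalks to $U$, the functor $\iota_U^*$ is exact. As an exact equivalence of abelian categories it must send injective objects to injective objects: using the quasi-inverse $K := F^\sigma \circ \Omega_{\sigma,\rho}^* \circ \pi_\rho^*$ exhibited in the proof of Lemma~\ref{lem:iUZ=Z}, which by the same stalkwise analysis is exact, for any injective $I\in\Sheaf{G}$ the functor $\operatorname{Hom}_{UGU}(-, \iota_U^* I) \cong \operatorname{Hom}_G(K(-), I)$ is exact, so $\iota_U^* I$ is injective in $\Sheaf{UGU}$.

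Next I would transport injective resolutions: given $B \in \Sheaf{G}$ and an injective resolution $B \to I^\bullet$, the complex $\iota_U^* B \to \iota_U^* I^\bullet$ is an injective resolution in $\Sheaf{UGU}$. Since $\iota_U^*$ is fully faithful, there is a natural cochain isomorphism
\[
\operatorname{Hom}_G(\underline\ZZ_{G^{(0)}}, I^\bullet) \xrightarrow{\ \cong\ } \operatorname{Hom}_{UGU}\big(\iota_U^*\underline\ZZ_{G^{(0)}}, \iota_U^* I^\bullet\big).
\]
Taking cohomology produces a natural isomorphism
\[
H^n(G,B) = \Ext^n_G(\underline\ZZ_{G^{(0)}}, B) \cong \Ext^n_{UGU}\big(\iota_U^*\underline\ZZ_{G^{(0)}}, \iota_U^* B\big).
\]
Combined with the isomorphism $\iota_U^*\underline\ZZ_{G^{(0)}} \cong \underline\ZZ_U$ from Lemma~\ref{lem:iUZ=Z}, the right-hand side is $H^n(UGU, \iota_U^* B)$, proving the first assertion. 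For the second assertion, specialise to $B = \Tgerms_{G^{(0)}}$ and invoke Lemma~\ref{lem:iU(T)=T} to identify $\iota_U^*(\Tgerms_{G^{(0)}})$ with $\Tgerms_U$.

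The main obstacle is making the preservation-of-injectives step airtight: one must verify cleanly that the quasi-inverse $K$ constructed in Lemma~\ref{lem:iUZ=Z} is exact. Since $K$ is the composition of the pullback $\pi_\rho^*$, the groupoid-isomorphism pullback $\Omega^*_{\sigma,\rho}$, and the quotient functor $F^\sigma$, exactness follows once one checks each factor acts exactly on stalks; after that the remainder of the argument is a formal consequence of standard homological algebra. If one prefers to avoid an explicit invocation of $K$, the same conclusion can be reached by noting that any equivalence of abelian categories transports injective resolutions to injective resolutions and hence preserves $\Ext^\bullet$ naturally in both arguments.
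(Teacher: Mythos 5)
Your argument is correct. The paper's own proof is essentially a citation: having established in Lemma~\ref{lem:iUZ=Z} that $\iota_U^*$ is an equivalence of categories with $\iota_U^*(\underline{\ZZ}_{G^{(0)}}) \cong \underline{\ZZ}_U$, it applies \cite[Proposition~1.8]{Kumjian1988} to the groupoid homomorphism $\iota_U$ to obtain the isomorphism $H^*(G,B) \cong H^*(UGU,\iota_U^*(B))$, and then uses Lemma~\ref{lem:iU(T)=T} together with naturality of $H^*$ to replace $\iota_U^*(\Tgerms_{G^{(0)}})$ by $\Tgerms_U$. What you do instead is reprove the content of that citation: since $H^n(G,B) = \Ext^n_G(\underline{\ZZ},B)$, an equivalence of abelian sheaf categories matching the constant sheaves induces isomorphisms of all Ext groups, which you verify via exactness, preservation of injectives through the quasi-inverse adjunction, and full faithfulness on the Hom complexes. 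So in substance the route is the same --- both arguments rest on Lemma~\ref{lem:iUZ=Z} and Lemma~\ref{lem:iU(T)=T} --- but yours is self-contained where the paper delegates the homological algebra to Kumjian; the trade-off is a longer proof in exchange for not needing the external reference. Two streamlinings: the stalkwise analysis is unnecessary, since any equivalence of abelian categories is automatically exact (its quasi-inverse is a two-sided adjoint, so kernels and cokernels are preserved), and preservation of injectives then follows from the adjunction exactly as you state, with no need to examine the factors $\pi_\rho^*$, $\Omega_{\sigma,\rho}^*$, $F^\sigma$ individually. One point worth making explicit: because $\iota_U^*$ is exact and carries injective resolutions to injective resolutions, the isomorphism you build at the cochain level is the canonical derived-functor comparison map induced by $\iota_U$, which is what guarantees the naturality that the paper exploits later (for instance in the commuting diagrams of Theorem~\ref{thm:DD class well-defd} and in Corollary~\ref{cor:Br well-defd}).
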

\begin{proof}
Note that $\iota^*_U(\underline{\ZZ}_{G^{(0)}}) = \underline{\ZZ}_U$ by Lemma~\ref{lem:iUZ=Z}.  So
the first isomorphism follows from applying \cite[Proposition~1.8]{Kumjian1988} to the groupoid
homomorphism $\iota_U : UGU \to G$.  In particular, there is an isomorphism
$\iota_U^*:H^2(G,\Tgerms_{G^{(0)}}) \to H^2(UGU, \iota_U^*( \Tgerms_{G^{(0)}}))$. Now
Lemma~\ref{lem:iU(T)=T} and naturality of $H^*$ imply that $H^2(UGU, \iota_U^*( \Tgerms_{G^{(0)}}))
\cong H^2(UGU, \Tgerms_U)$.
\end{proof}

\begin{cor}\label{cor:Br well-defd}
Let $X$ be a second-countable, locally compact, locally
Hausdorff space. For $i = 1,2$ fix a second-countable, locally
compact, Hausdorff space $Y_i$ and a local homeomorphism $\psi_i
: Y_i \to X$. Let $Y = Y_1 \sqcup Y_2$, and define $\psi : Y \to
X$ by $\psi|_{Y_i} = \psi_i$. Then for each $i$, the inclusion
map $\iota_{Y_i} : R(\psi_i) \to R(\psi)$ induces an isomorphism
$\iota_{Y_i}^* : H^2(R(\psi), \Tgerms_Y) \to H^2(R(\psi_i),
\Tgerms_{Y_i})$. In particular $\iota_{1,2} :=
\iota^*_{Y_2}\circ (\iota^*_{Y_1})^{-1}$ is an isomorphism from
$H^2(R(\psi_1), \Tgerms_{Y_1})$ to $H^2(R(\psi_2),
\Tgerms_{Y_2})$.
\end{cor}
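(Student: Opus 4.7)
The plan is to deduce the first assertion by applying Proposition~\ref{prp:H2 well-defd} with $G = R(\psi)$ and $U = Y_i$, then invoking the sheaf identification from Lemma~\ref{lem:iU(T)=T}; the statement about $\iota_{1,2}$ then follows formally as a composition of two isomorphisms.

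Concretely, I would first identify $R(\psi)^{(0)}$ with $Y$ via $(y,y) \mapsto y$, so that each $Y_i$ sits inside $R(\psi)^{(0)}$. That $Y_i$ is open is immediate because $Y = Y_1 \sqcup Y_2$ is a topological disjoint union. For fullness of $Y_i$ in $R(\psi)$, given any $y \in Y$ one picks $y' \in Y_i$ with $\psi_i(y') = \psi(y)$ and takes the arrow $(y', y) \in R(\psi)$, which has source $y$ and range in $Y_i$; this relies on $\psi_i$ surjecting onto the image of $\psi$, which is the standing assumption in the intended applications. Next, I would verify the equality $Y_i R(\psi) Y_i = R(\psi_i)$ as topological groupoids: both are $\{(y_1, y_2) \in Y_i \times Y_i : \psi_i(y_1) = \psi_i(y_2)\}$ with the subspace topology from $Y \times Y$, and under this identification the inclusion $\iota_{Y_i}$ matches the map $U G U \hookrightarrow G$ considered in Proposition~\ref{prp:H2 well-defd}. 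That proposition then produces an isomorphism $H^2(R(\psi), \Tgerms_Y) \to H^2(R(\psi_i), \iota_{Y_i}^*(\Tgerms_Y))$, and Lemma~\ref{lem:iU(T)=T} together with naturality of equivariant cohomology converts the target to $H^2(R(\psi_i), \Tgerms_{Y_i})$, proving the first claim. The assertion that $\iota_{1,2} = \iota_{Y_2}^* \circ (\iota_{Y_1}^*)^{-1}$ is an isomorphism is then immediate.

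The only non-trivial point is the fullness verification, which rests on $\psi_i$ covering the image of $\psi$; this is implicit in the setting in which the corollary is intended to be applied (for instance, when the $\psi_i$ are spectral maps from diagonals to the spectrum of a liminal Fell algebra, as supplied by Lemma~\ref{lem:D-points}). All other ingredients are formal: the groupoid identification $Y_i R(\psi) Y_i = R(\psi_i)$ is a direct computation, and the sheaf pullback calculation is exactly what the second half of Lemma~\ref{lem:iU(T)=T} provides, with the role of $X$ there played by $Y$ and the role of $U$ by $Y_i$.
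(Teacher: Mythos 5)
Your proof is correct and follows the paper's own argument exactly: the paper simply notes that the $Y_i$ are full in $R(\psi)^{(0)}$ and that $Y_i R(\psi) Y_i = R(\psi_i)$, then invokes Proposition~\ref{prp:H2 well-defd} (whose statement already incorporates the identification $\iota_{Y_i}^*(\Tgerms_Y) \cong \Tgerms_{Y_i}$ via Lemma~\ref{lem:iU(T)=T}, so your separate appeal to that lemma is redundant but harmless). Your caveat about fullness is well observed: it does require $\psi_1(Y_1) = \psi_2(Y_2)$, which is not explicit in the hypotheses but holds in the paper's applications (e.g.\ the maps $\tilde\psi_i$ in Theorem~\ref{thm:DD class well-defd} are surjective by Lemma~\ref{lem:D-points}), whereas the paper asserts fullness without comment.
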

\begin{proof}
The $Y_i$ are full in $R(\psi)^{(0)}$, and $Y_i R(\psi) Y_i =
R(\psi_i)$. The result now follows from Proposition~\ref{prp:H2
well-defd}.
\end{proof}

Let $\Gamma \to R$ be a twist,  $R'$ be a principal \'etale
groupoid, and  $\varphi : R' \to R$ be a continuous groupoid
homomorphism. Then the pullback twist $\varphi^*(\Gamma)$ is the
fibred product $R' \ast_\varphi \Gamma$ with structure maps
$r(\alpha,\gamma) = r(\alpha)$ and $s(\alpha,\gamma) =
s(\alpha)$, and with coordinatewise operations; it is regarded
as a twist over $R'$ under the surjection $(\alpha,\gamma)
\mapsto \alpha$.

Recall from \cite[Remark~2.9]{Kumjian1988} that given a twist
$\Gamma \stackrel{q}{\to} R$ there is an extension
\[
\Tgerms_{R^{(0)}} \to \underline{\Gamma} \to R
\]
such that $\underline{\Gamma}$ is the groupoid consisting of
germs of continuous local sections of the surjection $\Gamma \to
R$. Such extensions are called \emph{sheaf twists}, and the
group of isomorphism classes of sheaf twists over $R$ is denoted
$T_R(\Tgerms)$ (see \cite[Definition~2.5]{Kumjian1988}).
Pullbacks of sheaf twists are defined in a manner analogous to
that of the preceding paragraph. By the discussion in
\cite[Section~2.9]{Kumjian1988}, the assignment $\Gamma \mapsto
\underline{\Gamma}$ determines an isomorphism $\theta_R :
[\Gamma] \mapsto [\underline{\Gamma}]$ from the group
$\mathrm{Tw}(R)$ of isomorphism classes of twists over $R$ to
$T_R(\Tgerms)$. Moreover, suppose that $R$ is a principal
\'etale groupoid, $\Gamma$ is a twist over $R$, and $U$ is a
full open subset of $X = R^{(0)}$. Then an argument nearly
identical to that of Lemma~\ref{lem:iU(T)=T} shows that
$[\phi]^U_u \mapsto (u, [\phi]^X_u)$ determines an isomorphism
$\underline{\iota^*_U(\Gamma)} \cong
\iota^*_U(\underline{\Gamma})$. Hence, using
Lemma~\ref{lem:iU(T)=T} to identify $\iota^*_U(\Tgerms_X)$ with
$\Tgerms_U$, we see that the diagram
\begin{equation}\label{eq:pullback CD}
\begin{CD}
     \mathrm{Tw}(R)      @>>\iota_U^*>   \mathrm{Tw}(URU) \\
       @VV\theta_{R}V                   @VV\theta_{URU}V \\
T_{R}(\Tgerms_X) @>>\iota_U^*> T_{URU}(\Tgerms_U)
\end{CD}
\end{equation}
commutes.

The long exact sequence of \cite[Theorem~3.7]{Kumjian1988}
yields a boundary map $\partial^1$ from the first derived
functor $Z^1_R$ of the cocycle functor to $H^2(R, \Tgerms)$. By
\cite[Corollary~3.4]{Kumjian1988}, the twist group
$T_R(\Tgerms)$ is naturally isomorphic to $Z^1_R$, so each
twist $\Gamma$ over $R$ determines an element
$\partial^1\big(\!\big[\underline{\Gamma}\big]\!\big) \in
H^2(R, \Tgerms)$.

\begin{thm}\label{thm:DD class well-defd}
Fix a separable Fell algebra $A$. For each of $i = 1,2$ suppose
that $(C_i, D_i)$ is a diagonal pair, and that $H_i$ is an
$A$--$C_i$-imprimitivity bimodule with Rieffel homeomorphism
$h_i : \widehat{C}_i \to \widehat{A}$. For each $i$, let
$\psi_i : \widehat{D}_i \to \widehat{C}_i$ be the spectral map,
and let $\Gamma_i$ be a twist associated to $(C_i, D_i)$ as in
Theorem~\ref{thm-alex}. For each $i$, let $\tilde\psi_i := h_i
\circ \psi_i : \widehat{D}_i \to \widehat{A}$. Then the
isomorphism $\iota_{1,2} : H^2(R(\tilde{\psi}_1),
\Tgerms_{\widehat{D}_1}) \to H^2(R(\tilde{\psi}_2),
\Tgerms_{\widehat{D}_2})$ of Corollary~\ref{cor:Br well-defd}
carries
$\partial^1\big(\!\big[\underline{\Gamma}_1\big]\!\big)$ to
$\partial^1\big(\!\big[\underline{\Gamma}_2\big]\!\big)$.
\end{thm}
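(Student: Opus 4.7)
The plan is to realise both $\Gamma_1$ and $\Gamma_2$ simultaneously as pullbacks of a single twist built from a linking algebra. Combining the bimodules $H_1$ and $H_2$ gives a $C_1$--$C_2$-imprimitivity bimodule $X := \widetilde{H}_1 \otimes_A H_2$, whose Rieffel homeomorphism is $h_1^{-1}\circ h_2$. Let $L$ denote its linking algebra, with corner projections $q_1,q_2 \in M(L)$. Then $L$ is a separable Fell algebra (Morita equivalent to $A$), and by the argument in the proof of Lemma~\ref{lem-twists}, $D:=D_1 \oplus D_2 \subset q_1 L q_1 \oplus q_2 L q_2 \subset L$ is a diagonal subalgebra. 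Let $\Gamma \to R$ be a twist associated to $(L,D)$ by Theorem~\ref{thm-alex}. Identify $\widehat{L}$ with $\widehat{A}$ via the Rieffel homeomorphisms of the imprimitivity bimodules $Lq_i$ and $H_i$; the latter are mutually compatible by our choice of $X$, so this identification is unambiguous.

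Next I would identify the resulting groupoids. Since $\widehat{D} = \widehat{D}_1 \sqcup \widehat{D}_2$, and since a pure state of $D_i \subset q_i L q_i$ extends uniquely to $C_i$ and then uniquely to $L$ by \cite[Proposition~3.1.6]{Ped}, the spectral map $\psi_L\colon\widehat{D}\to\widehat{L}$ restricts on $\widehat{D}_i$ to $\psi_i$, which under the identification $\widehat{L}\cong\widehat{A}$ becomes $\tilde\psi_i$. Proposition~\ref{thm-equivalencerelationoftwist} then gives a groupoid isomorphism $R \cong R(\tilde\psi)$ where $\tilde\psi|_{\widehat{D}_i} = \tilde\psi_i$. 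Each $U_i := \widehat{D}_i$ is a full open subset of $R^{(0)}$, and $U_i R U_i \cong R(\tilde\psi_i)$.

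The key step is to show that $\iota_{U_i}^*[\Gamma] = [\Gamma_i]$ in $\mathrm{Tw}(R(\tilde\psi_i))$. Unpacking the construction recalled after Theorem~\ref{thm-alex}, the pullback $\iota_{U_i}^*(\Gamma)$ has elements $[n,y]$ with $n \in N(D)$ and $y \in \widehat{D}_i$ satisfying $y(n^*n) > 0$ and $\sigma_n(y) \in \widehat{D}_i$. I would observe that the adjustment argument used for non-unital pairs in Remark~\ref{nonunital} (replacing $n$ by $nb$ for a suitable $b \in D$) lets us take $n$ of the form $q_i n' q_i$ for a normaliser $n'$ of $D_i$ inside $q_i L q_i = C_i$. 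The identification $[n,y] \leftrightarrow [n',y]$ is then a $\TT$-equivariant groupoid isomorphism from $\iota_{U_i}^*(\Gamma)$ to the twist constructed from $(C_i,D_i)$, and this twist is isomorphic to $\Gamma_i$ by uniqueness in Theorem~\ref{thm-alex}. The main obstacle is this compatibility check; it is technical but straightforward because the normaliser calculus, the sections $\sigma_n$, and the unique conditional expectation all restrict cleanly to corners.

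Finally, commutativity of diagram~\eqref{eq:pullback CD} gives $\theta_{U_i R U_i}([\Gamma_i]) = \iota_{U_i}^*(\theta_R[\Gamma])$, i.e.\ $[\underline{\Gamma_i}] = \iota_{U_i}^*[\underline{\Gamma}]$ in $T_{R(\tilde\psi_i)}(\Tgerms_{\widehat{D}_i})$. Since $\partial^1$ is natural with respect to groupoid homomorphisms (being a connecting map in the long exact sequence of \cite[Theorem~3.7]{Kumjian1988} and coinciding with $\iota_{U_i}^*$ on the level of $H^2$ via Proposition~\ref{prp:H2 well-defd} and Lemma~\ref{lem:iU(T)=T}), we deduce
\[
\partial^1\big(\!\big[\underline{\Gamma_i}\big]\!\big)
 = \iota_{U_i}^*\,\partial^1\big(\!\big[\underline{\Gamma}\big]\!\big)
 \quad\text{in } H^2(R(\tilde\psi_i),\Tgerms_{\widehat{D}_i}).
\]
Applying $\iota_{U_2}^* \circ (\iota_{U_1}^*)^{-1} = \iota_{1,2}$ to the $i=1$ equation and comparing with the $i=2$ equation yields $\iota_{1,2}\big(\partial^1[\underline{\Gamma_1}]\big) = \partial^1[\underline{\Gamma_2}]$, as required.
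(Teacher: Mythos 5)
Your proposal is correct and follows essentially the same route as the paper: build the linking algebra with diagonal $D_1\oplus D_2$, realise $\Gamma_1$ and $\Gamma_2$ as restrictions of the resulting linking twist over $R(\tilde\psi)$, and then conclude via the commuting square~\eqref{eq:pullback CD} together with naturality of $\partial^1$ in the long exact sequence of \cite[Theorem~3.7]{Kumjian1988}. The only difference is one of packaging: where the paper invokes Lemma~\ref{lem-twists} (and hence \cite[Proposition~5.4]{Kumjian1986}) for the corner identifications $\iota^*_{\widehat{D}_i}(\Gamma)\cong\Gamma_i$, you re-derive them by the normaliser adjustment, and you make explicit the compatibility of the spectral map of $(L,D)$ with the glued map $\tilde\psi$ via the choice $X=\widetilde{H}_1\otimes_A H_2$ --- a point the paper leaves implicit.
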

\begin{proof}
Since each $C_i$ is Morita equivalent to $A$, each $C_i$ is a
separable Fell algebra, and Lemma~\ref{lem-twists} implies that
$\Gamma_1$ and $\Gamma_2$ are equivalent twists. Let $\Gamma \to
R$ be a linking twist (see Definition~\ref{dfn:twists
equivalent}). Then in particular, each $\Gamma_i \cong
\widehat{D}_i \Gamma \widehat{D}_i \cong
i^*_{\widehat{D_i}}(\Gamma)$. Let
$Y:=\widehat{D}_1\sqcup\widehat{D}_2$ and define $\psi : Y \to
\widehat{A}$ by $\psi|_{\widehat{D}_i} = \tilde{\psi}_i$. Since
$\iota_{1,2} = \iota^*_{\widehat{D}_2} \circ
\big(\iota^*_{\widehat{D}_1}\big)^{-1}$ by definition, it
suffices to show that, for each of $i = 1,2$, the isomorphism
\[
\iota^*_{\widehat{D}_i} : H^2(R(\psi), \Tgerms_Y) \to
H^2(R(\tilde\psi_i), \Tgerms_{\widehat{D}_i})
\]
obtained from the first statement of Corollary~\ref{cor:Br well-defd} carries
$\partial^1\big(\!\big[\underline{\Gamma}\big]\!\big)$ to
$\partial^1\big(\!\big[\underline{\Gamma}_i\big]\!\big)$.

The naturality of the long exact sequence of
\cite[Theorem~3.7]{Kumjian1988} together with
\cite[Corollary~3.4]{Kumjian1988} implies that the right-hand
square of the diagram
\[\begin{CD}
\mathrm{Tw}(R(\tilde{\psi}_i), \TT) @>>> T_{R(\tilde{\psi}_i)}(\Tgerms_{\widehat{D}_i}) @>>{\partial^1}> H^2(R(\tilde{\psi}_i), \Tgerms_{\widehat{D}_i}) \\
@AA{\iota^*_{\widehat{D}_i}}A         @AA{\iota^*_{\widehat{D}_i}}A                         @AA{\iota^*_{\widehat{D}_i}}A \\
\mathrm{Tw}(R(\psi), \TT) @>>> T_{R(\psi)}(\Tgerms_Y) @>>{\partial^1}> H^2(R(\psi), \Tgerms_Y) \\
\end{CD}\]
commutes; the left-hand square is an instance
of~\eqref{eq:pullback CD}. Since $\Gamma$ is a linking twist
for the $\Gamma_i$, the maps $\iota^*_{\widehat{D}_i}$ on the
left of the diagram carry $\big[\underline{\Gamma}\big]$ to
$\big[\underline{\Gamma}_i\big]$. Since the diagram commutes,
it follows that the maps $\iota^*_{\widehat{D}_i}$ on the right
of the diagram carry
$\partial^1\big(\!\big[\underline{\Gamma}\big]\!\big)$ to
$\partial^1\big(\!\big[\underline{\Gamma}_i\big]\!\big)$.
\end{proof}

If $X$ and $Y$ are topological spaces, and $\psi : Y \to X$ is
a local homeomorphism, then we may regard $X$ as a groupoid
whose only elements are units, and there is then an induced
groupoid homomorphism $\pi_\psi : R(\psi) \to X$ given by
$\pi_\psi(y,z) = \psi(y)$.

\begin{prop}\label{prp:Br(X)=H2(R(psi))}
Let $X$ be a second-countable, locally compact, locally
Hausdorff space, let $Y$ be a second-countable, locally compact,
Hausdorff space, and let $\psi : Y \to X$ be a local
homeomorphism. Then $\pi_\psi^* : \Sheaf{X} \to \Sheaf{R(\psi)}$
is an equivalence of categories such that
$\pi_\psi^*(\underline{\ZZ}_X) = \underline{\ZZ}_Y$ and
$\pi_\psi^*(\Tgerms_X) \cong \Tgerms_Y$. Moreover, $\pi_\psi^*$
determines an isomorphism $\pi_\psi^* : H^*(X, \Tgerms_X) \to
H^*(R(\psi),\Tgerms_Y)$. Finally, under the hypotheses of
Theorem~\ref{thm:DD class well-defd}, $\iota_{1,2} \circ
\pi_{\tilde{\psi}_1}^* = \pi_{\tilde{\psi}_2}^*$.
\end{prop}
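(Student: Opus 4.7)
The plan is to derive each of the four assertions from machinery already in place: Lemma~\ref{lem:iU(T)=T} will supply the identification of $\pi_\psi^*(\Tgerms_X)$ with $\Tgerms_Y$, \cite[Theorem~0.9]{Kumjian1988} will provide the category equivalence, and \cite[Proposition~1.8]{Kumjian1988} will promote it to an isomorphism of cohomology groups; the compatibility with Theorem~\ref{thm:DD class well-defd} will then follow by functoriality of pullback through a short diagram chase.

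First, I would view $X$ as a groupoid whose only elements are units. The groupoid $G^\psi$ defined preceding~\eqref{eq:pi_psi def}, applied with $G = X$, is exactly $\{(x, \psi(x), y) : \psi(x) = \psi(y)\}$, and the assignment $(x, \psi(x), y) \mapsto (x, y)$ realises it as the principal \'etale groupoid $R(\psi)$; under this identification, the homomorphism $\pi_\psi : X^\psi \to X$ of~\eqref{eq:pi_psi def} coincides with the $\pi_\psi : R(\psi) \to X$ appearing in the statement. Hence \cite[Theorem~0.9]{Kumjian1988} directly gives that $\pi_\psi^*$ is an equivalence of categories. Unpacking the definition of the pullback sheaf yields
\[
    \pi_\psi^*(\underline{\ZZ}_X) = \{(y, n, \psi(y)) : y \in Y, n \in \ZZ\} \cong \underline{\ZZ}_Y,
\]
and $\pi_\psi^*(\Tgerms_X) \cong \Tgerms_Y$ is the first assertion of Lemma~\ref{lem:iU(T)=T}. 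Combining these identifications with \cite[Proposition~1.8]{Kumjian1988}, which promotes the equivalence $\pi_\psi^*$ to an isomorphism on $\Ext^*$-groups because it carries $\underline{\ZZ}_X$ to $\underline{\ZZ}_Y$, yields the desired isomorphism $\pi_\psi^* : H^*(X, \Tgerms_X) \to H^*(R(\psi), \Tgerms_Y)$.

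For the final compatibility statement, I would set $Y := \widehat{D}_1 \sqcup \widehat{D}_2$ and define $\psi : Y \to \widehat{A}$ by $\psi|_{\widehat{D}_i} = \tilde\psi_i$, mirroring the set-up in the proof of Theorem~\ref{thm:DD class well-defd}. Since $R(\tilde\psi_i) = \widehat{D}_i R(\psi) \widehat{D}_i$ and $\pi_\psi \circ \iota_{\widehat{D}_i} = \pi_{\tilde\psi_i}$ as groupoid homomorphisms into $\widehat{A}$, functoriality of pullback on sheaf cohomology gives $\iota_{\widehat{D}_i}^* \circ \pi_\psi^* = \pi_{\tilde\psi_i}^*$ for $i = 1, 2$. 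Unwinding the definition $\iota_{1,2} = \iota^*_{\widehat{D}_2} \circ (\iota^*_{\widehat{D}_1})^{-1}$ from Corollary~\ref{cor:Br well-defd} then gives
\[
    \iota_{1,2} \circ \pi_{\tilde{\psi}_1}^*
        = \iota^*_{\widehat{D}_2} \circ (\iota^*_{\widehat{D}_1})^{-1} \circ \iota^*_{\widehat{D}_1} \circ \pi_\psi^*
        = \iota^*_{\widehat{D}_2} \circ \pi_\psi^*
        = \pi_{\tilde{\psi}_2}^*.
\]
The main nuisance I anticipate is book-keeping: keeping straight the two presentations of the pullback groupoid ($R(\psi)$ versus $X^\psi$), and verifying that the sheaf identifications supplied by Lemma~\ref{lem:iU(T)=T} transport compatibly through the category equivalence of \cite[Theorem~0.9]{Kumjian1988}; once this is done the remainder is a routine diagram chase.
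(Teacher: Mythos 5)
Your proposal is correct and follows essentially the same route as the paper's proof: realising $R(\psi)$ as the pullback groupoid $X^\psi$ over the trivial groupoid $X$, invoking \cite[Theorem~0.9]{Kumjian1988} for the category equivalence, Lemma~\ref{lem:iU(T)=T} for the identification of $\pi_\psi^*(\Tgerms_X)$ with $\Tgerms_Y$, \cite[Proposition~1.8]{Kumjian1988} for the cohomology isomorphism, and the commuting triangle over $\widehat{A}$ built from $Y = \widehat{D}_1 \sqcup \widehat{D}_2$ together with the definition of $\iota_{1,2}$ for the final compatibility. No gaps; your diagram chase for $\iota_{1,2}\circ\pi^*_{\tilde\psi_1} = \pi^*_{\tilde\psi_2}$ is exactly the paper's argument made explicit.
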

\begin{proof}
Regard $X$ as a groupoid with unit space $X$ whose only
morphisms are units. Then
\[
X^\psi = \{(y,x,z) : \psi(y) = x = \psi(z)\} \cong R(\psi),
\]
and under this identification the map $\pi_\psi : (y,x,z)
\mapsto x$ of~\eqref{eq:pi_psi def} agrees with the map
$\pi_\psi : R(\psi) \to X$ described above.

By \cite[Proposition~0.8 and Theorem~0.9]{Kumjian1988},
$\pi_\psi^*$ is an equivalence of categories which takes
$\underline{\ZZ}$ to $\underline{\ZZ}$. Moreover,
Lemma~\ref{lem:iU(T)=T} implies that $\pi_\psi^*$ takes
$\Tgerms$ to $\Tgerms$ also. That $\pi_\psi^*$ determines an
isomorphism of cohomologies follows from
\cite[Proposition~1.8]{Kumjian1988}.

It remains to show that $\iota_{1,2} \circ
\pi_{\tilde{\psi}_2}^* = \pi_{\tilde{\psi}_1}^*$. For this, let
$Y_i = \widehat{D}_i$ for $i = 1,2$, let $Y := Y_1 \sqcup Y_2$
and define $\tilde{\psi} : Y \to \widehat{A}$ by
$\tilde{\psi}|_{Y_i} = \tilde{\psi}_i$ as in
Corollary~\ref{cor:Br well-defd}. Consider the diagrams below.
\[
\xymatrix{
& Y\ar[dd]^{\tilde{\psi}}& & & &R(\tilde{\psi}) \ar[dd]^{\pi_{\tilde{\psi}}}& \\
Y_1 \ar[ur]^{\iota_{Y_1}}\ar[dr]_{\tilde{\psi}_1} & & Y_2\ar[ul]_{\iota_{Y_2}}\ar[dl]^{\tilde{\psi}_2}
&\qquad&
    R(\tilde{\psi}_1) \ar[ur]^{\iota_{Y_1}}\ar[dr]_{\pi_{\tilde{\psi}_1}} & &R(\tilde{\psi}_2) \ar[ul]_{\iota_{Y_2}}\ar[dl]^{\pi_{\tilde{\psi}_2}} \\
& \widehat{A} & & & & \widehat{A} &}
\]
The diagram on the left commutes by definition, and it follows
that the diagram on the right commutes also. Recall that
$\iota_{1,2} = (\iota^*_{Y_2})^{-1} \circ \iota^*_{Y_1}$ by
definition. Thus functoriality and naturality of the cohomology
exact sequence, and that $\pi_{\tilde{\psi}}^*$ takes $\Tgerms$
to $\Tgerms$ ensure that $\iota_{1,2} \circ
\pi_{\tilde{\psi}_1}^* = \pi_{\tilde{\psi}_2}^*$ as required.
\end{proof}

Theorem~\ref{thm:DD class well-defd} and
Proposition~\ref{prp:Br(X)=H2(R(psi))} ensure that we may
specify a well-defined invariant as follows.

\begin{defn}\label{dfn:DD invariant}
Let $A$ be a separable Fell algebra. Let $(C,D)$ be a diagonal
pair such that $C$ is Morita equivalent to $A$, fix an
$A$--$C$-imprimitivity bimodule, and let $h : \widehat{C} \to
\widehat{A}$ be its Rieffel homeomorphism. Let $\psi :
\widehat{D} \to \widehat{C}$ be the spectral map, and
$\tilde\psi := h \circ \psi : \widehat{D} \to \widehat{A}$. Let
$\Gamma$ be the twist associated to $(C,D)$ as in
Theorem~\ref{thm-alex}. Then we define
\[
\delta(A) := (\pi^*_{\tilde\psi})^{-1}\big(\partial^1\big(\!\big[\underline{\Gamma}\big]\!\big)\big) \in H^2(\widehat{A}, \Tgerms).
\]
\end{defn}

\begin{remark}\label{rmk:why not DD invariant}
It seems difficult to establish that our invariant $\delta(A)$
coincides with the original Dixmier-Douady invariant of $A$
when $A$ is a continuous-trace $C^*$-algebra. The issue is that
the boundary map $\partial^1$ which takes the class of a twist
over $R(\tilde\psi)$ to an element of
$H^2(R(\tilde\psi),\Tgerms)$ is defined by abstract nonsense.
Nevertheless our invariant does classify Fell algebras up to
spectrum-preserving Morita equivalence (see Theorem~\ref{thm:DD
classification}), and this generalises the original
Dixmier-Douady theorem of~\cite{DD}.
\end{remark}

\begin{prop}
Let $(G,X)$ be a free Cartan transformation group. Then
$\delta(C_0(X) \rtimes G) = 0$ as an element of $H^2(X/G,
\Tgerms)$.
\end{prop}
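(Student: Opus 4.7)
The plan is to realise $C_0(X) \rtimes G$ as Morita equivalent to the $C^*$-algebra of an explicit \emph{trivial} twist, and then read off $\delta = 0$ directly from the definition.

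First, I would invoke Corollary~\ref{cor-greens} to produce a Morita equivalence between $C_0(X) \rtimes G$ and $C^*(R(\psi))$, where $\psi : Y \to G\backslash X$ is the local homeomorphism built in Section~\ref{sec:green}. By the appendix discussion on page~\pageref{pg:tgcsa outline}, the $C^*$-algebra of the trivial twist $Y \to Y \times \TT \to R(\psi) \times \TT \to R(\psi)$ is $C^*_{\red}(R(\psi))$, with its canonical copy of $C_0(Y)$. Since $R(\psi)$ is principal \'etale and Cartan (so that $C^*(R(\psi))$ is a Fell algebra, as noted after Corollary~\ref{cor-greens}), I would identify this with $C^*(R(\psi))$; then Theorem~\ref{thm-main}(2) tells me that $(C^*(R(\psi)), C_0(Y))$ is a diagonal pair with spectrum-preserving Rieffel homeomorphism $h$ such that $h \circ \psi_P = \psi$, where $\psi_P$ is the spectral map.

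Next I would apply Definition~\ref{dfn:DD invariant} to this diagonal pair. The key claim is that the twist produced by Theorem~\ref{thm-alex} from $(C^*(R(\psi)), C_0(Y))$ is (isomorphic to) the trivial twist $R(\psi) \times \TT \to R(\psi)$. To see this, note that the construction recalled after Theorem~\ref{thm-alex} recovers $\tgcsa{\Gamma}{R}$ together with its diagonal $C_0(\Gamma^{(0)})$ from the resulting twist; since we began with $C^*(R(\psi))$ and $C_0(Y)$ which are already realised from the trivial twist $R(\psi)\times\TT$ via the commuting diagram~\eqref{eq:twist CD}, the uniqueness built into Theorem~\ref{thm-alex} forces the resulting twist to be (isomorphic to) $R(\psi)\times\TT \to R(\psi)$. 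Concretely, every normaliser of $C_0(Y)$ in $C^*(R(\psi))$ is a $\TT$-multiple of a continuous section supported on a bisection of $R(\psi)$, so the map $[n,y] \mapsto (q([n,y]), 1)$ gives a continuous $\TT$-equivariant homomorphic section of $\Gamma \to R(\psi)$, witnessing triviality in the sense of page~\pageref{pg:trivial}.

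Finally, because $\Gamma \to R(\psi)$ is trivial, its class in $\mathrm{Tw}(R(\psi))$ is the identity; hence $\theta_{R(\psi)}([\Gamma]) = [\underline{\Gamma}] \in T_{R(\psi)}(\Tgerms)$ is trivial, and since $\partial^1$ is a group homomorphism, $\partial^1([\underline{\Gamma}]) = 0 \in H^2(R(\psi), \Tgerms_Y)$. Applying $(\pi^*_{\tilde\psi})^{-1}$ from Proposition~\ref{prp:Br(X)=H2(R(psi))} yields $\delta(C_0(X)\rtimes G) = 0$ in $H^2(G\backslash X, \Tgerms)$.

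The main obstacle is the middle step: verifying carefully that the abstract twist produced from $(C^*(R(\psi)), C_0(Y))$ by the normaliser construction is genuinely trivial, rather than merely cohomologically trivial. The cleanest route is via the functorial uniqueness in Theorem~\ref{thm-alex}, but one should double-check that the identification $C^*_{\red}(R(\psi) \times \TT; R(\psi)) \cong C^*(R(\psi))$ coming from the appendix matches the identification of diagonal subalgebras in~\eqref{eq:twist CD}; this is where I would spend the most care.
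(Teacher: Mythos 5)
Your overall route is the same as the paper's: use Corollary~\ref{cor-greens} to replace $C_0(X)\rtimes G$ by a groupoid $C^*$-algebra over $R(\psi)$, realise that algebra as the $C^*$-algebra of the trivial twist, and conclude $\delta=0$ because the trivial sheaf twist is killed by $\partial^1$. But two steps need attention. First, Corollary~\ref{cor-greens} gives a Morita equivalence with the \emph{full} algebra $C^*(R(\psi))$, whereas Lemma~\ref{lem:C* trivial twist} identifies the algebra of the trivial twist with the \emph{reduced} algebra $C^*_{\red}(R(\psi))$; your ``I would identify this with $C^*(R(\psi))$'' is precisely the point that requires an argument, and ``$R(\psi)$ is principal, \'etale and Cartan'' is not by itself a reason. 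The paper bridges this by observing that $C^*_{\red}(R(\psi))$ is a Fell algebra, hence nuclear, so that $R(\psi)$ (being principal) is measurewise amenable by \cite[Corollary~6.2.14]{ADR}, whence $C^*(R(\psi))=C^*_{\red}(R(\psi))$ by \cite[Proposition~6.1.8]{ADR}.

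Second, your middle step is both flawed and unnecessary. Theorem~\ref{thm-alex} is an existence statement: it contains no uniqueness clause that ``forces'' the twist built from the pair $\big(C^*_{\red}(R(\psi)),C_0(Y)\big)$ to be the trivial one. Moreover the explicit formula $[n,y]\mapsto (q([n,y]),1)$ is not well defined as a trivialisation or a section: it collapses each $\TT$-orbit (note $t\cdot[n,y]=[\overline{t}n,y]$ maps to the same pair), so it is neither injective nor $\TT$-equivariant; any genuine trivialisation must record the phase of $n$ at the relevant point, and exhibiting a continuous homomorphic section $R(\psi)\to\Gamma$ (e.g.\ via nonnegative normalisers supported on bisections) is exactly the work you have postponed. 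A correct reconstruction statement of this kind does exist, but the paper avoids it entirely: by Theorem~\ref{thm-main}(1) any two twists whose $C^*$-algebras are Morita equivalent to $A$ are equivalent, and the argument of Theorem~\ref{thm:DD class well-defd} (linking twist plus naturality) shows equivalent twists have the same image under $\partial^1$ modulo the canonical isomorphisms of Corollary~\ref{cor:Br well-defd} and Proposition~\ref{prp:Br(X)=H2(R(psi))}. So once $C_0(X)\rtimes G$ is known to be Morita equivalent to $\tgcsa{R(\psi)\times\TT}{R(\psi)}$, the triviality of the sheaf twist associated to $R(\psi)\times\TT$ already gives $\delta(C_0(X)\rtimes G)=(\pi^*_{\tilde\psi})^{-1}\big(\partial^1\big(\big[\underline{R(\psi)\times\TT}\big]\big)\big)=0$, with no need to analyse the twist produced by Theorem~\ref{thm-alex}.
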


\begin{proof}
By Corollary~\ref{cor-greens}, $C_0(X)\rtimes G$ is Morita
equivalent to a  groupoid $C^*$-algebra $C^*(R)$, where $R$ is
a principal, \'etale groupoid. By the remarks following
Corollary~\ref{cor-greens}, $C^*(R)$ is a Fell algebra. Thus
the reduced $C^*$-algebra $C^*_{\red}(R)$ is also a Fell
algebra and hence is nuclear. By \cite[Corollary~6.2.14]{ADR},
since $R$ principal and $C^*_{\red}(R)$ is nuclear, $R$ is
measurewise amenable, and thus $C^*(R)=C^*_{\red}(R)$ by
\cite[Proposition~6.1.8]{ADR}.

By Lemma~\ref{lem:C* trivial twist}, $C^*_{\red}(R)$ is
isomorphic to the $C^*$-algebra  $\tgcsa{R \times \TT}{R}$ of the trivial twist $\Gamma:=R\times\TT\to R$. The associated sheaf
twist $\underline{\Gamma}$ is therefore also trivial and hence
$\partial^1([\underline{\Gamma}])=0$. It follows that
$\delta(C_0(X) \rtimes G) = 0$ also.
\end{proof}

To prove our classification theorem, we need another lemma.

\begin{lemma}\label{lem:DD->equivalent twists}
Let $X$ be a second-countable, locally compact, locally
Hausdorff space. For $i =1,2$, let $Y_i$ be a second-countable,
locally compact, Hausdorff space, and let $\psi_i : Y_i \to X$
be a local homeomorphism. For $i = 1,2$, let $\Gamma_i \to
R(\psi_i)$ be a twist, and suppose that the isomorphism
$\iota_{1,2}$ of Corollary~\ref{cor:Br well-defd} carries
$\partial^1\big(\!\big[\underline{\Gamma}_1\big]\!\big)$ to
$\partial^1\big(\!\big[\underline{\Gamma}_2\big]\!\big)$. Then
there exists a locally compact, Hausdorff space $Z$ and local
homeomorphisms $\rho_i : Z \to Y_i$ such that $\psi_1 \circ
\rho_1 = \psi_2 \circ \rho_2$ and $\pi_{\rho_1}^*(\Gamma_1)
\cong \pi_{\rho_2}^*(\Gamma_2)$ as twists over $R(\psi_1 \circ
\rho_1)$. In particular, $\Gamma_1$ and $\Gamma_2$ are
equivalent twists.
\end{lemma}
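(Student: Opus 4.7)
The natural candidate for $Z$ is the fibered product $Z := Y_1 \times_X Y_2 = \{(y_1, y_2) \in Y_1 \times Y_2 : \psi_1(y_1) = \psi_2(y_2)\}$, equipped with the coordinate projections $\rho_i : Z \to Y_i$. Since each $\psi_i$ is a local homeomorphism between second-countable, locally compact, Hausdorff spaces, $Z$ inherits these properties and each $\rho_i$ is a local homeomorphism; by construction $\psi_1 \circ \rho_1 = \psi_2 \circ \rho_2$. Write $\psi$ for this common composition. Each $\rho_i$ lifts to a groupoid homomorphism $\tilde\rho_i : R(\psi) \to R(\psi_i)$ sending $(z, z')$ to $(\rho_i(z), \rho_i(z'))$, and this is the homomorphism underlying the pullback twist $\pi_{\rho_i}^*(\Gamma_i) = \tilde\rho_i^*(\Gamma_i)$.

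The first step is to see that the two pullback twists determine the same class in $H^2(R(\psi), \Tgerms_Z)$. Let $W := Y_1 \sqcup Y_2$ and $\psi_W : W \to X$ be as in Corollary~\ref{cor:Br well-defd}; then $\tilde\rho_i$ factors through $\iota_{Y_i}$, and a diagram chase combining the naturality of $\theta_R$ (see~\eqref{eq:pullback CD}), of the boundary map $\partial^1$, and of the pullback functors on $\Tgerms$-sheaves translates the hypothesis $\iota_{1,2}\big(\partial^1([\underline{\Gamma}_1])\big) = \partial^1([\underline{\Gamma}_2])$ into a single class $\eta \in H^2(R(\psi_W), \Tgerms_W)$ with $\iota_{Y_i}^*(\eta) = \partial^1([\underline{\Gamma}_i])$. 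Pulling back $\eta$ further via $R(\psi) \to R(\psi_W)$ (in either block, reconciled by the fibred-product bibundle structure that $Z$ provides between $R(\psi_1)$ and $R(\psi_2)$) yields the desired identity
\[
\partial^1\bigl(\bigl[\underline{\pi_{\rho_1}^*(\Gamma_1)}\bigr]\bigr) = \partial^1\bigl(\bigl[\underline{\pi_{\rho_2}^*(\Gamma_2)}\bigr]\bigr) \in H^2(R(\psi), \Tgerms_Z).
\]

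The second step is to upgrade this equality of $H^2$-classes to an isomorphism of the underlying sheaf twists (and hence, via $\theta_{R(\psi)}$, of the twists themselves). This is the main obstacle: the boundary $\partial^1 : Z^1_{R(\psi)} \to H^2(R(\psi), \Tgerms_Z)$ arising from the long exact sequence of \cite[Theorem~3.7]{Kumjian1988} is not injective in general, so its kernel — represented by a $\TT$-valued cocycle on $R(\psi)$ — must be killed by a further refinement. Using paracompactness and second countability of $Z$, one finds a local-homeomorphism refinement $Z' \to Z$ on which the kernel-representing cocycle admits a continuous $\TT$-valued trivialisation by a partition-of-unity argument; replacing $Z$ by $Z'$ and composing $\rho_i$ with $Z' \to Z$ then produces the desired isomorphism $\pi_{\rho_1}^*(\Gamma_1) \cong \pi_{\rho_2}^*(\Gamma_2)$ as twists over $R(\psi_1 \circ \rho_1)$. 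This mirrors the classical procedure of passing from equal \v{C}ech classes to cocycles that agree on a sufficiently fine refinement.

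With the isomorphism in hand, to deduce the ``in particular'' conclusion, construct the linking twist $\Gamma \to R(\psi_W)$ witnessing equivalence in the sense of Definition~\ref{dfn:twists equivalent}: set $\Gamma$ equal to $\Gamma_i$ on each diagonal block $Y_i R(\psi_W) Y_i = R(\psi_i)$, and splice using the isomorphism of Step~2 to endow the off-diagonal blocks (each identified with $Z$ after refinement) with a compatible $\TT$-bundle structure. One then verifies that $Y_1, Y_2$ are full and open in $R(\psi_W)^{(0)}$ and that $\iota_{Y_i}^*(\Gamma) \cong \Gamma_i$, completing Definition~\ref{dfn:twists equivalent}.
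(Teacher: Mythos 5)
Your overall skeleton---pass to the fibred product $Y_1 \times_X Y_2$, check that the pulled-back classes agree, then refine further by a local homeomorphism to convert equality of classes into an isomorphism of twists---is the same as the paper's, but the crux is precisely that refinement step, and your justification of it would fail as written. By exactness of the long exact sequence of \cite[Theorem~3.7]{Kumjian1988}, an element of $\ker\partial^1 \subset Z^1_{R(\psi)}(\Tgerms)$ is not an arbitrary ``$\TT$-valued cocycle on $R(\psi)$'': it is the image of a class in $H^1(Z,\Tgerms)$ of the \emph{unit space}, i.e.\ of a principal $\TT$-bundle over $Z$, and it is exactly this that makes it killable by a further pullback, by taking $Z'$ to be the disjoint union of an open cover of $Z$ trivialising that bundle (compare the paper's Lemma~\ref{lem:pullback to trivialise}, which uses precisely this disjoint-union device). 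A partition-of-unity argument cannot trivialise circle-valued data: partitions of unity handle cocycles valued in fine sheaves such as continuous real-valued functions, and if they trivialised $\TT$-valued cocycles then $H^1(Z,\Tgerms)\cong H^2(Z,\underline\ZZ)$ would vanish for every paracompact $Z$, which is false (e.g.\ $Z=S^2$). Moreover, even after killing the difference class one only knows the two pullbacks agree in the twist group; passing from that to an isomorphism of sheaf twists, and then of topological twists via $\theta$, uses the identification $T_R(\Tgerms)\cong Z^1_R$ and naturality. All of this is packaged in \cite[Proposition~3.9]{Kumjian1988}, which is what the paper invokes at this point; as it stands, your Step~2 is a genuine gap rather than a proof.

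Two smaller points. In Step~1, the claim that pulling back through the $Y_1$-block and through the $Y_2$-block of $R(\psi_W)$ induces the same map on $H^2$ needs an argument (the two homomorphisms $R(\psi)\to R(\psi_W)$ are intertwined by the canonical off-diagonal elements $(y_1,y_2)\in R(\psi_W)$); the paper's formulation via $\iota_{1,2}$ and naturality sidesteps this. For the ``in particular'' statement, the paper does not splice a linking twist by hand: it first shows, using \cite[Proposition~5.7]{Kumjian1986} together with the identifications $R(\psi_i)^{\rho_i}\cong R(\psi_i\circ\rho_i)$ and $\pi^*_{\rho_i}(\Gamma_i)\cong\Gamma_i^{\rho_i}$, that each $\Gamma_i$ is twist-equivalent to its pullback $\pi^*_{\rho_i}(\Gamma_i)$, and then the isomorphism of the two pullbacks yields the equivalence. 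Your splicing proposal also has a mismatch: the off-diagonal blocks of $R(\psi_W)$ are the original fibred product $Y_1\times_X Y_2$, not the refined space $Z'$, so they are not ``identified with $Z$ after refinement,'' and endowing them with a compatible $\TT$-bundle structure is not immediate from an isomorphism over the refined relation.
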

\begin{proof}
Let $Y := Y_1 \ast Y_2 = \{(y_1, y_2) \in Y_1 \times Y_2 :
\psi_1(y_1) = \psi_2(y_2)\}$. For each $i$, let $\phi_i : Y \to
Y_i$ be the projection map; then $\psi_1 \circ \phi_1 = \psi_2
\circ\phi_2$ is a local homeomorphism from $Y$ to $X$.

We claim that each $\Gamma_i$ is twist-equivalent to
$\pi^*_{\phi_i}(\Gamma_i)$. To see this, we first observe that
for $i = 1,2$, the assignment
\[
(x, (\phi_i(x), \phi_i(y)), y) \mapsto (x,y)
\]
is an isomorphism from $R(\psi_i)^{\phi_i}$ to $R(\psi_i \circ
\phi_i)$, and the assignment $((x,y),g) \mapsto (x,g,y)$ is an
isomorphism from $\pi_{\phi_i}^*(\Gamma_i)$ to
$\Gamma_i^{\phi_i}$. By \cite[Proposition~5.7]{Kumjian1986},
each $\Gamma_i$ is equivalent to $\Gamma_i^{\phi_i}$, so the
isomorphisms above complete the proof of the claim.

Since
\[
\iota_{1,2}\big(\partial^1\big(\!\big[\pi_{\phi_1}^*(\underline{\Gamma}_1)\big]\!\big)\big)
    = \partial^1\big(\!\big[\pi_{\phi_2}^*(\underline{\Gamma}_2)\big]\!\big),
\]
\cite[Proposition~3.9]{Kumjian1988} implies that there exists a
locally compact, Hausdorff space $Z$ and a local homeomorphism
$\tau : Z \to Y$ such that
$\pi_\tau^*(\pi_{\phi_1}^*(\underline{\Gamma}_1))$ and
$\pi_\tau^*(\pi_{\phi_2}^*(\underline{\Gamma}_2))$ are
isomorphic sheaf twists. Since each $\pi^*_{\phi_i \circ
\tau}(\underline{\Gamma}_i) =
\pi_\tau^*(\pi_{\phi_i}^*(\underline{\Gamma}_i))$, it follows
that with $\rho_i := \phi_i \circ \tau :  Z \to Y_i$, we have
$\pi^*_{\rho_1}(\underline{\Gamma}_1) \cong
\pi^*_{\rho_2}(\underline{\Gamma}_2)$, and hence by naturality
\begin{equation}\label{eq:pisubrhos equiv}
\pi^*_{\rho_1}(\Gamma_1) \cong \pi^*_{\rho_2}(\Gamma_2).
\end{equation}

For the final assertion, we apply the claim above with $\phi_i$
replaced with $\rho_i$ to see that each $\Gamma_i$ is
twist-equivalent to $\pi^*_{\rho_i}(\Gamma_i)$, and then
invoke~\eqref{eq:pisubrhos equiv}.
\end{proof}

\begin{thm}\label{thm:DD classification}
Let $A_1$ and $A_2$ be separable Fell algebras. Then $A_1$ and
$A_2$ are Morita equivalent if and only if there is a
homeomorphism $h : \widehat{A}_1 \to \widehat{A}_2$ such that
the induced isomorphism $h^* : H^2(\widehat{A}_2, \Tgerms) \to
H^2(\widehat{A}_1, \Tgerms)$ carries $\delta(A_2)$ to
$\delta(A_1)$.
\end{thm}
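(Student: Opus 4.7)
The plan is to prove both implications using the machinery already developed: Theorem~\ref{prp:construct diagonal} to produce diagonal pairs Morita equivalent to the $A_i$; Theorem~\ref{thm-alex} to produce twists from diagonal pairs; Lemma~\ref{lem-twists} to translate Morita equivalence between diagonal pairs into twist equivalence; and Lemma~\ref{lem:DD->equivalent twists} to translate equality of $\partial^1$-classes into twist equivalence. The key organizing idea is to transport the two invariants $\delta(A_i)$ to a common equivariant-cohomology group so that they can be compared directly.

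For the forward implication, I would use a single diagonal pair to compute both invariants. Assuming $A_1 \Me A_2$, apply Theorem~\ref{prp:construct diagonal} to $A_1$ to get a diagonal pair $(C,D)$ with $C$ Morita equivalent to $A_1$, and hence (by composing bimodules) also to $A_2$. Let $\psi : \widehat{D} \to \widehat{C}$ be the spectral map, let $h_i : \widehat{C} \to \widehat{A}_i$ be the Rieffel homeomorphisms coming from the two chosen bimodules, and let $\tilde\psi_i := h_i \circ \psi$. Then Theorem~\ref{thm-alex} produces a single twist $\Gamma \to R(\psi)$ which computes both $\delta(A_i)$ via Definition~\ref{dfn:DD invariant}. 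Set $h := h_2 \circ h_1^{-1} : \widehat{A}_1 \to \widehat{A}_2$. Then $h \circ \tilde\psi_1 = \tilde\psi_2$, and since $R(\tilde\psi_1) = R(\tilde\psi_2)$ with $\pi_{\tilde\psi_2} = h \circ \pi_{\tilde\psi_1}$, functoriality of the pullback yields $\pi^*_{\tilde\psi_2} = \pi^*_{\tilde\psi_1} \circ h^*$. Inverting this relation and applying both sides to $\partial^1([\underline{\Gamma}])$ gives $\delta(A_2) = (h^*)^{-1}(\delta(A_1))$, so $h^*(\delta(A_2)) = \delta(A_1)$.

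For the reverse implication, apply Theorem~\ref{prp:construct diagonal} to each $A_i$ to obtain diagonal pairs $(C_i, D_i)$ with $C_i \Me A_i$, and let $\psi_i$, $h_i$, $\tilde\psi_i$, $\Gamma_i$ be as in Definition~\ref{dfn:DD invariant}. The given $h$ does not directly compare the twists, since they live over different bases, so first push $\tilde\psi_1$ forward by defining $\tilde\psi'_1 := h \circ \tilde\psi_1 : \widehat{D}_1 \to \widehat{A}_2$. Since $h$ is a homeomorphism, $R(\tilde\psi'_1) = R(\tilde\psi_1)$ as topological groupoids, so $\Gamma_1$ may be regarded as a twist over $R(\tilde\psi'_1)$, and $\pi^*_{\tilde\psi'_1} = \pi^*_{\tilde\psi_1} \circ h^*$. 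Now $\tilde\psi'_1$ and $\tilde\psi_2$ share the codomain $\widehat{A}_2$, so Corollary~\ref{cor:Br well-defd} provides an isomorphism $\iota_{1,2} : H^2(R(\tilde\psi'_1), \Tgerms) \to H^2(R(\tilde\psi_2), \Tgerms)$, and Proposition~\ref{prp:Br(X)=H2(R(psi))} (applied with $\tilde\psi'_1, \tilde\psi_2$ in place of $\tilde\psi_1, \tilde\psi_2$) gives $\iota_{1,2} \circ \pi^*_{\tilde\psi'_1} = \pi^*_{\tilde\psi_2}$. Chasing definitions using the hypothesis $h^*(\delta(A_2)) = \delta(A_1)$ yields
\[
\iota_{1,2}\big(\partial^1([\underline{\Gamma}_1])\big)
    = \iota_{1,2}\big(\pi^*_{\tilde\psi_1}(\delta(A_1))\big)
    = \iota_{1,2}\big(\pi^*_{\tilde\psi'_1}(\delta(A_2))\big)
    = \pi^*_{\tilde\psi_2}(\delta(A_2))
    = \partial^1([\underline{\Gamma}_2]).
\]
Lemma~\ref{lem:DD->equivalent twists} then gives twist equivalence of $\Gamma_1$ and $\Gamma_2$, Lemma~\ref{lem-twists} promotes this to $C_1 \Me C_2$, and transitivity of Morita equivalence delivers $A_1 \Me A_2$.

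The main care required is bookkeeping: one must track the various spectral maps $\tilde\psi_i$ and $\tilde\psi'_1$, verify $\pi^*_{\tilde\psi'_1} = \pi^*_{\tilde\psi_1} \circ h^*$ by recognizing that $\pi_{\tilde\psi'_1} = h \circ \pi_{\tilde\psi_1}$, and confirm that Corollary~\ref{cor:Br well-defd} combines with Proposition~\ref{prp:Br(X)=H2(R(psi))} to yield the crucial identity $\iota_{1,2} \circ \pi^*_{\tilde\psi'_1} = \pi^*_{\tilde\psi_2}$ (a routine application of functoriality once the domains and codomains are aligned). None of these steps is particularly deep on its own; the substantive work has been absorbed into Lemmas~\ref{lem-twists} and~\ref{lem:DD->equivalent twists}, and the role of this final proof is simply to assemble them.
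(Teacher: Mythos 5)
Your proposal is correct and follows essentially the same route as the paper: the forward direction computes both invariants from a single diagonal pair and twist and uses $\tilde\psi_2 = h\circ\tilde\psi_1$ together with functoriality of $\pi^*$, and the reverse direction transports the hypothesis through Corollary~\ref{cor:Br well-defd} and Proposition~\ref{prp:Br(X)=H2(R(psi))} to match the $\partial^1$-classes, then applies Lemma~\ref{lem:DD->equivalent twists} and Lemma~\ref{lem-twists}. Your explicit introduction of $\tilde\psi'_1 = h\circ\tilde\psi_1$ is just a more careful rendering of the paper's step ``regard $\Gamma_1$ as a twist over $R(h\circ\psi_1)$,'' so there is no substantive difference.
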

\begin{proof}
First suppose that $H$ is an $A_2$--$A_1$-imprimitivity bimodule
and let $h : \widehat{A}_1 \to \widehat{A}_2$ be the associated
Rieffel homeomorphism. Let $(C, D)$ be a diagonal pair together
with an $A_2$--$C$-imprimitivity bimodule $K$, and let $k :
\widehat{C} \to \widehat{A}_2$ be the Rieffel homeomorphism
associated to $K$. Let $\psi : \widehat{D} \to \widehat{C}$ be
the spectral map.

Let $\tilde\psi_2 := k \circ \psi : \widehat{D} \to
\widehat{A}_2$, and let $\Gamma_2\to R$ be the twist obtained
from $(C, D)$ as in Theorem~\ref{thm-alex}.  Note that
$R=R(\tilde\psi_2)$ by
Proposition~\ref{thm-equivalencerelationoftwist}. By
definition, $\delta(A_2) = (\pi_{\tilde{\psi}_2}^*)^{-1}\big(
\partial^1\big(\!\big[\underline{\Gamma}_2\big]\!\big)\big) \in
H^2(\widehat{A}_2, \Tgerms)$. Let $\tilde{H}$ be the dual
bimodule of $H$, and observe that $K \otimes_{A_2} \tilde H$ is
a $C$--$A_1$-imprimitivity bimodule with Rieffel homeomorphism
$h^{-1} \circ k$. Let $\tilde\psi_1 := h^{-1} \circ k \circ \psi
: \widehat{D} \to \widehat{A}_1$, and let $\Gamma_1$ be the
twist over $R(\tilde\psi_1)$ obtained from $(C, D)$ as in
Theorem~\ref{thm-alex}. Again by definition, $\delta(A_1) =
(\pi_{\tilde\psi_1}^*)^{-1}\big(\partial^1\big(\!\big[\underline{\Gamma}_1\big]\!\big)\big)
\in H^2(R(\tilde\psi_1), \Tgerms)$. Since $\tilde\psi_2 = h
\circ \tilde\psi_1$, Theorem~\ref{thm:DD class well-defd} and
Proposition~\ref{prp:Br(X)=H2(R(psi))} imply that the induced
isomorphism $h^* : H^2(\widehat{A}_2, \Tgerms) \to
H^2(\widehat{A}_1,\Tgerms)$ carries $\delta(A_2)$ to
$\delta(A_1)$.

Now suppose that there is a homeomorphism $h : \widehat{A}_1 \to
\widehat{A}_2$ such that the induced isomorphism $h^* :
H^2(\widehat{A}_2,\Tgerms) \to H^2(\widehat{A}_1,\Tgerms)$
carries $\delta(A_2)$ to $\delta(A_1)$. Let $(C_i, D_i)$ be
diagonal pairs with $C_i$ Morita equivalent to $A_i$, let
$\psi_i : \widehat{D}_i \to \widehat{C}_i$ be the spectral maps,
and let $\Gamma_i \to R(\psi_i)$ be the associated twists.
Proposition~\ref{prp:Br(X)=H2(R(psi))} and the hypothesis that
$h^*$ carries $\delta(A_2)$ to $\delta(A_1)$ ensures that the
induced map (also denoted $h^*$) from $H^2(R(\psi_2), \Tgerms)$
to $H^2(R(\psi_1),\Tgerms)$ satisfies
\[
h^*\big(\partial^1\big(\!\big[\underline{\Gamma}_2\big]\!\big)\big)
    = \partial^1\big(\!\big[\underline{\Gamma}_1\big]\!\big).
\]
Hence we may regard $\Gamma_1$ as a twist over $R(h \circ
\psi_1)$ with the same image under $\partial^1$ as $\Gamma_2$.
Lemma~\ref{lem:DD->equivalent twists} therefore implies that
$\Gamma_1$ and $\Gamma_2$ are equivalent twists, and then
Lemma~\ref{lem-twists} implies that $C_1$ and $C_2$ are Morita
equivalent, whence $A_1$ and $A_2$ are also Morita equivalent.
\end{proof}

Recall that if $X$ is the spectrum of a $C^*$-algebra then $X$
is locally compact, and every open subset of $X$ is itself
locally compact (because it is the spectrum of an ideal); such
spaces are called \emph{locally quasi-compact} in
\cite[\S3.3]{dix}.

\begin{remark}\label{rmk:cohomologies coincide}
Let $X$ be a second-countable, locally Hausdorff space such
that every open subset of $X$ is locally compact. We will show
that every element of $H^2(X,\Tgerms)$ arises as the class of a
Fell algebra with spectrum $X$. To do this, it is convenient to
work with \v{C}ech cohomology, rather than sheaf cohomology, of
a locally compact, Hausdorff ``desingularisation" $Y$ of $X$.

It is observed in \cite[Hooptedoodle~4.16]{tfb}, with reference
to \cite[\S5.23]{Warner1971}, that all reasonable
sheaf-cohomology theories coincide over Hausdorff paracompact
spaces. Specifically, by Theorem~5.32 of \cite{Warner1971} and
the subsequent corollary, any two sheaf cohomologies over
Hausdorff paracompact spaces satisfying
\cite[Axioms~5.18]{Warner1971} are canonically isomorphic.
Warner demonstrates in \cite[\S5.33]{Warner1971} that \v{C}ech
cohomology satisfies these axioms. All but one of these axioms
are automatically satisfied by the sheaf cohomology used here
because it is defined in terms of derived functors; the
remaining axiom (property~(b) of
\cite[Axioms~5.18]{Warner1971}) requires that $H^q(Y,B) = 0$
for $q > 0$ if $B$ is a fine sheaf, and this follows from
\cite[Proposition~4.36]{Voisin2002}.
\end{remark}

For an introduction to \v{C}ech cohomology, see
\cite[Chapter~4]{tfb}. Given a covering $\{U_i : i \in I\}$ of a
space $Y$, and given $i,j,k \in I$, we write $U_{ijk}$ for the
intersection $U_i \cap U_j \cap U_k$.

\begin{lemma}\label{lem:pullback to trivialise}
Let $Y$ be a second-countable, locally compact, Hausdorff space.
For each $a \in H^2(Y, \Tgerms)$, there exists a locally
compact, Hausdorff space $Z$ and a local homeomorphism $\phi : Z
\to Y$ such that $\phi^*(a) = 0 \in H^2(Z, \Tgerms)$.
\end{lemma}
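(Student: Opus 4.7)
The plan is to identify $H^2(Y,\Tgerms)$ with \v{C}ech cohomology, represent $a$ by a \v{C}ech cocycle with respect to some cover of $Y$, and then take $Z$ to be the topological disjoint union of the sets in that cover. On $Z$ the tautological cover consists of pairwise disjoint open sets, so any \v{C}ech $2$-cocycle refined to this cover is trivially a coboundary, and the pullback $\phi^*(a)$ must vanish.

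In detail, I would invoke Remark~\ref{rmk:cohomologies coincide} (applicable because $Y$ is second countable, locally compact, and Hausdorff, hence paracompact) to identify $H^2(Y,\Tgerms)$ with the \v{C}ech cohomology $\check H^2(Y,\Tgerms)$, and then represent $a$ by an alternating \v{C}ech $2$-cocycle $(c_{ijk})$ with respect to a countable open cover $\{U_i\}_{i\in I}$ of $Y$ (second countability provides a countable such cover via the Lindel\"of property). Set $Z:=\bigsqcup_{i\in I} U_i$ with the disjoint union topology and define $\phi:Z\to Y$ to restrict on each component $V_i$ (the copy of $U_i$ inside $Z$) to the inclusion $U_i\hookrightarrow Y$. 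Since each $U_i$ inherits local compactness from $Y$ and $I$ is countable, $Z$ is second countable, locally compact, and Hausdorff, and $\phi$ is manifestly a local homeomorphism. The pullback cocycle $\phi^*(c)$ represents $\phi^*(a)$ on the cover $\{\phi^{-1}(U_i)\}$ of $Z$; refining along the inclusions $V_i\hookrightarrow\phi^{-1}(U_i)$ yields a cocycle $\tilde c$ on the cover $\{V_i\}$. The crucial observation is that the $V_i$ are pairwise disjoint in $Z$, so $\tilde c_{ijk}$ can be nonzero only when $i=j=k$, in which case $\tilde c_{iii}=c_{iii}\circ\phi|_{V_i}=1$ because $c$ is alternating. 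Hence $\phi^*(a)=0$ in $\check H^2(Z,\Tgerms)\cong H^2(Z,\Tgerms)$.

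The one delicate point is that the Dixmier--Douady--type class $\delta$ is defined via derived-functor sheaf cohomology while the cancellation above is performed on the \v{C}ech side, so the argument requires naturality of the canonical identification of the two theories with respect to $\phi^*$. This follows from the uniqueness theorem for sheaf cohomology theories satisfying the Warner axioms~\cite[\S5.18 and Theorem~5.32]{Warner1971}, which applies to both $Y$ and $Z$ because both are paracompact Hausdorff; I do not anticipate any further obstacles.
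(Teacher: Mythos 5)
Your proposal is correct and follows essentially the same route as the paper's proof: identify $H^2(Y,\Tgerms)$ with $\check{H}^2(Y,\Tgerms)$ via Remark~\ref{rmk:cohomologies coincide}, represent $a$ by a cocycle on an open cover, take $Z$ to be the disjoint union of the cover with $\phi$ the tautological map, and note that after refining to the pairwise disjoint sets $V_i$ the only surviving triple overlaps are the $V_{iii}$. The only (harmless) difference is the final step: you appeal to the alternating-cochain convention to force $\tilde c_{iii}=1$, whereas the paper avoids any such normalisation by observing that the surviving cochain is the coboundary of itself regarded as a $1$-cochain; either justification suffices.
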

\begin{proof}
By Remark~\ref{rmk:cohomologies coincide}, we may regard $a$ as
an element of $\check{H}^2(Y, \Tgerms)$. So there exists a
covering $\mathcal{U} = \{U_i : i \in I\}$ of $Y$ by open sets
and a 2-cocycle $c = \{c_{ijk} : U_{ijk} \to \TT \mid i,j,k \in
I\}$ such that $a$ is equal to the class of $c$ in
$\check{H}^2(Y, \Tgerms)$.

Let $Z := \bigcup_{i \in I} (\{i\} \times U_i) \subset I \times
Y$, and let $\phi : Z \to Y$ be the projection onto the second
coordinate. Let $V_i := \{i\} \times U_i \subset Z$ for each
$i$. Then $\mathcal{V} = \{V_i : i \in I\}$ is a refinement of
the pullback cover $\{\phi^{-1}(U_i) : i \in I\}$ to a cover by
mutually disjoint sets; in particular the only nonempty triple
overlaps are those of the form $V_{iii}$. Since $\check{H}^2(Z,
\Tgerms)$ is the direct limit over covers of $Z$ of the cocycle
group, the class of $\phi^*(c)$ is equal to the class of its
image $i_{\mathcal{U}, \mathcal{V}}(\phi^*(c))$ in the cocycle
group for the $\mathcal{V}$. Since the $V_i$ are pairwise
disjoint, $i_{\mathcal{U}, \mathcal{V}}(\phi^*(c))$ amounts to
a continuous circle-valued function on each $V_{iii}$, and so
is a coboundary (specifically, the coboundary of itself
regarded as a $1$-cochain).
\end{proof}

Next we  require notation for the
forgetful functor which takes an equivariant $\Gamma$-sheaf
$B$ to an ordinary sheaf $B^0$ over $\Gamma^0$ by forgetting
the $\Gamma$-action. Note that  $B^0 = \jmath^*(B)$ where
$\jmath : \Gamma^{(0)} \to \Gamma$ is the inclusion map.
The pullback functor induces the homomorphism
$\jmath^*_n: H^n(\Gamma, B)\to H^n(\Gamma^{(0)}, B^0)$ which
appears in the long exact sequence of
\cite[Theorem~3.7]{Kumjian1988}.

\begin{prop}\label{prop:invariant exhausted}
Let $X$ be a second-countable, locally Hausdorff space such
that every open subset of $X$ is locally compact. Then for each
$a \in H^2(X,\Tgerms)$ there exists a locally compact,
Hausdorff space $Z$, a local homeomorphism $\psi : Z \to X$ and
a twist $\Gamma$ over $R(\psi)$ such that $a =
(\pi_\psi^*)^{-1}\big(\partial^1\big(\!\big[\underline{\Gamma}\big]\!\big)\big)$.
In particular, for each $a \in H^2(X,\Tgerms)$, there exists a
separable Fell algebra $A$  such that $\widehat{A} = X$ and $a = \delta(A)$.
\end{prop}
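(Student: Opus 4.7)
The plan is to reduce $a$ to a class that vanishes on the unit space via two successive desingularizations, then use exactness of the long exact sequence of \cite[Theorem~3.7]{Kumjian1988} to produce a twist realizing it, and finally invoke Theorem~\ref{thm-main}(2) to build the Fell algebra. The whole argument hinges on arranging the pullback $\psi^{*}(a)$ to vanish in ordinary sheaf cohomology of the new unit space.

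First I would construct a local homeomorphism $\psi : Z \to X$ with $Z$ second-countable, locally compact and Hausdorff such that $\psi^{*}(a) = 0$ in $H^{2}(Z, \Tgerms_{Z})$. Since $X$ is second-countable and locally Hausdorff with every open subset locally compact, a countable Hausdorff open cover $\{U_{i}\}_{i \in \NN}$ of $X$ exists; set $Y := \bigsqcup_{i} U_{i}$ and let $\phi : Y \to X$ be the obvious local homeomorphism, so $Y$ is second-countable, locally compact and Hausdorff. Apply Lemma~\ref{lem:pullback to trivialise} to the class $\phi^{*}(a) \in H^{2}(Y, \Tgerms_{Y})$; inspecting the proof (which takes $Z = \bigsqcup_{i} V_{i}$ from a cover of $Y$) shows that $Z$ inherits second-countability from $Y$. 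This produces $\tau : Z \to Y$ with $\tau^{*}(\phi^{*}(a)) = 0$, and setting $\psi := \phi \circ \tau$ gives $\psi^{*}(a) = 0$ as required.

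Next, by Proposition~\ref{prp:Br(X)=H2(R(psi))} the map $\pi_{\psi}^{*}$ is an isomorphism $H^{2}(X, \Tgerms) \cong H^{2}(R(\psi), \Tgerms)$. The unit inclusion $\jmath : Z = R(\psi)^{(0)} \hookrightarrow R(\psi)$ satisfies $\pi_{\psi} \circ \jmath = \psi$, so by functoriality the forgetful map sends $\pi_{\psi}^{*}(a)$ to $\psi^{*}(a) = 0$. Exactness of the segment
\[
Z^{1}_{R(\psi)} \xrightarrow{\partial^{1}} H^{2}(R(\psi), \Tgerms) \xrightarrow{\jmath^{*}_{2}} H^{2}(Z, \Tgerms)
\]
of the long exact sequence from \cite[Theorem~3.7]{Kumjian1988}, combined with the identification $Z^{1}_{R(\psi)} \cong T_{R(\psi)}(\Tgerms)$ from \cite[Corollary~3.4]{Kumjian1988}, yields a sheaf twist $\underline{\Gamma}$ over $R(\psi)$ with $\partial^{1}([\underline{\Gamma}]) = \pi_{\psi}^{*}(a)$. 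Applying $\theta_{R(\psi)}^{-1}$ lifts $\underline{\Gamma}$ to a topological twist $\Gamma \to R(\psi)$, proving the first assertion.

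For the ``in particular'' statement, $\Gamma$ may be taken second-countable (since it can be constructed from a cocycle over a countable cover of the second-countable groupoid $R(\psi)$), so Theorem~\ref{thm-main}(2) gives a Fell algebra $A := \tgcsa{\Gamma}{R(\psi)}$ with diagonal $B := C_{0}(Z)$ and a homeomorphism $h : X \to \widehat{A}$ satisfying $h \circ \psi = \psi_{P}$. Unwinding Definition~\ref{dfn:DD invariant} with $(C, D) = (A, B)$ and the identity $A$--$A$-imprimitivity bimodule, one has $\tilde{\psi} = h \circ \psi$, and since $\pi_{h \circ \psi} = h \circ \pi_{\psi}$ functoriality gives $\pi_{\tilde{\psi}}^{*} = \pi_{\psi}^{*} \circ h^{*}$. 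Hence $\delta(A) = (h^{*})^{-1}((\pi_{\psi}^{*})^{-1}(\partial^{1}([\underline{\Gamma}]))) = (h^{*})^{-1}(a)$, which is $a$ after identifying $\widehat{A}$ with $X$ via $h$. The main obstacle is the second desingularization $\tau$: without forcing $\psi^{*}(a) = 0$ in $H^{2}(Z, \Tgerms)$, one cannot in general realize $\pi_{\psi}^{*}(a)$ as the image of a class under $\partial^{1}$, because $\partial^{1}$ need not be surjective.
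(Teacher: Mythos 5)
Your argument is correct and follows essentially the same route as the paper: desingularise $X$ by the disjoint union $Y$ of a countable Hausdorff cover, use Lemma~\ref{lem:pullback to trivialise} to pass to $Z$ where the pulled-back class vanishes, invoke exactness of the long exact sequence of \cite[Theorem~3.7]{Kumjian1988} (with \cite[Corollary~3.4]{Kumjian1988} and the twist-group identification) to produce $\Gamma$ over $R(\psi)$ with $\partial^1([\underline{\Gamma}]) = \pi_\psi^*(a)$, and then apply Theorem~\ref{thm-main}(2) and Definition~\ref{dfn:DD invariant}. The only cosmetic difference is that you phrase the vanishing condition as $\psi^*(a)=0$ directly and appeal to $\jmath^*\circ\pi_\psi^*=\psi^*$, whereas the paper pulls back to $H^2(R(\theta),\Tgerms)$ first and uses naturality; these amount to the same thing.
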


\begin{proof}
Choose a countable open cover $\{U_i\}$ of $X$ consisting of
Hausdorff subsets of $X$ and let $Y:=\bigsqcup_i U_i$. Since
every open subset of $X$ is locally compact, each $U_i$ is
locally compact, and hence $Y$ is locally compact and
Hausdorff. The inclusion map $\theta:Y\to X$ is a local
homeomorphism. Let $b := \pi_\theta^*(a) \in
H^2(R(\theta),\Tgerms)$. By Lemma~\ref{lem:pullback to
trivialise}, there exists a second-countable, locally compact,
Hausdorff space $Z$ and a local homeomorphism $\phi : Z \to Y$
such that $\phi^*(\jmath^*_2(b)) = 0$. Let $\psi := \theta
\circ \phi : Z \to X$. Then by naturality of the long exact
sequence, $\jmath^*_2(\pi_\psi^*(a)) = 0 \in H^2(Z,\Tgerms)$.
By exactness, it follows that  there is a twist $\Gamma$ over
$R(\psi)$ such that $\pi_\psi^*(a)=
\partial^1\big(\!\big[\underline{\Gamma}\big]\!\big)$.
Let $A := \tgcsa{\Gamma}{R(\psi)}$. By Theorem~\ref{thm-main}(2), $A$ is
a Fell algebra and its spectrum is homeomorphic to $X$.
After identifying $\hat A$ with $X$, $\delta(A) = a$ by Definition \ref{dfn:DD invariant}.
\end{proof}

\begin{remark}\label{rmk:kudos to KMRW}
There is a notion of a Brauer group $\Br(G)$ for a locally
compact, Hausdorff groupoid $G$ \cite{KMRW}. Moreover,
\cite[Proposition~11.3]{KMRW} implies that if $G$ is \'etale,
then $\Br(G) \cong H^2(G, \Tgerms)$. If $Z$ is a groupoid
equivalence of locally compact, Hausdorff groupoids $G$ and
$H$, then $Z$ determines an isomorphism between $H^2(G,
\Tgerms)$ and $H^2(H, \Tgerms)$ \cite[Theorem~4.1]{KMRW}. Thus
$H^2(X, \Tgerms)$ is canonically isomorphic to $\Br(R(\psi))$ for any local
homeomorphism $\psi$ from a locally compact, Hausdorff space onto
$X$ (the isomorphism of Proposition~\ref{prp:H2 well-defd}
is a special case of \cite[Theorem~4.1]{KMRW}).

Though it would have been natural to identify $\Br(X)$ with
$H^2(X, \Tgerms)$ for a  locally compact, locally Hausdorff
space $X$, we have chosen not to use the notation $\Br(X)$ nor
the term Brauer group as the notion has not yet been extended
to non-Hausdorff spaces (to say nothing of non-Hausdorff
groupoids).   To justify the use of the term it  would first be
necessary to formulate a notion of balanced tensor product for
Fell algebras with spectra identified with $X$. We leave the
details for future work.
\end{remark}

\appendix\section{The \texorpdfstring{$C^*$}{C*}-algebra of a twist}

Let $\Gamma$ be a $\TT$-groupoid and $ \Gamma \stackrel{q}{\to}
R $ a twist (see page~\pageref{defn twist} for the definition
of a twist).  The details of the construction of the twisted
groupoid $C^*$-algebra $\tgcsa{\Gamma}{R}$ may be found in \S2
of \cite{Kumjian1986}. The idea is that a dense subalgebra is
identified with continuous compactly supported sections of an
associated line bundle, and then convolution and involution are
defined by virtue of the Fell bundle structure of the line
bundle (as in \cite[\S{5}]{Renault2008} but our conventions
differ slightly). We briefly review the construction for the
convenience of the reader. Since $R$ is an \'etale groupoid, we
may use the standard Haar system consisting of counting
measures.

Define a line bundle $L = L(\Gamma)$ over $R$ by taking the
quotient of $\CC \times \Gamma$ by the diagonal action of $\TT$
--- that is, $L$ consists of equivalence classes of the
equivalence relation $(z,\gamma)\sim(tz, t\cdot\gamma)$ for
$t\in\TT$. Then $L$ is a complex line-bundle over $R$ with
bundle map $[(z,\gamma)] \mapsto q(\gamma)$. As usual, we denote
by $L_\rho$ the fibre over $\rho \in R$.

The following Fell bundle structure on $L$ is implicit in
\cite{Kumjian1986}. Given a composable pair $(\rho_1, \rho_2)$
of elements in $R$ and elements $[(z_i,\gamma_i)] \in
L_{\rho_i}$, we define the product in $L_{\rho_1\rho_2}$ by
\[
[(z_1,\gamma_1][(z_2,\gamma_2)] = [(z_1z_2,\gamma_1\gamma_2)];
\]
and involution is defined by $[(z,\gamma)] \in L_{q(\gamma)}
\mapsto [(\overline{z},\gamma^{-1})] \in L_{q(\gamma^{-1})}$.
It is straightforward to check that these operations are
well defined.

Now define
\[
C_c(\Gamma; R):=\{f\in C_c(\Gamma): f(t\cdot\gamma)
    = tf(\gamma)\text{\ for $t\in\TT$ and $\gamma\in \Gamma$}\}.
\]
Each $f \in C_c(\Gamma;R)$ determines a section $\tilde{f}$ of
$L$ by the formula $\tilde{f}(q(\gamma)) :=
[(f(\gamma),\gamma)]$ (it is straightforward to check that this
map is well defined). Moreover, given $\gamma \in \Gamma$, and
$z \in \TT$ the element $z$ is uniquely determined by $\gamma$
and $[z,\gamma]$, so $f \mapsto \tilde{f}$ is a bijection.
Hence we may endow $C_c(\Gamma; R)$ with the structure of a
$*$-algebra by the following formulae for $f, g \in C_c(\Gamma;
R)$
\[
(f*g)\widetilde{\ }\;(\rho)
    = \sum_{\alpha\beta = \rho} \tilde{f}(\alpha)\tilde{g}(\beta)
\quad\text{and}\quad
    \widetilde{f}^*(\rho) = \tilde{f}(\rho^{-1})^*.
\]
These operations match up with the convolution and involution
on $C_c(\Gamma;R)$ used in, for example, \cite{MW1992} and
\cite{Renault2008}. To keep our notation simple we identify
each element of $C_c(\Gamma;R)$ with the corresponding
compactly supported continuous section of the Fell bundle $L$
(as in \cite{Kumjian1986} and \cite{Renault2008}).

Note that the map, $(x, z) \mapsto [(x, z)]$ gives a
trivialisation $R^{(0)} \times \CC \cong L|_{R^{(0)}}$ and hence
$L$ is trivial over $R^{(0)}$; thus we may identify
$C_c(R^{(0)})$ with the abelian subalgebra $\{f\in C_c(\Gamma;
R):\supp f\subset R^{(0)}\}$, so  $C_c(\Gamma; R)$ may be
regarded as a right $C_c(R^{(0)})$ module under
right-multiplication. Moreover, the restriction map
$P:C_c(\Gamma; R)\to C_c(R^{(0)})$  is a $C_c(R^{(0)})$-module
morphism. For $f,g\in C_c(\Gamma; R)$, the formula $\langle
f\,,\, g\rangle =P(f^*g)$ defines an inner product on
$C_c(\Gamma; R)$, and the completion  $H(\Gamma; R)$ of
$C_c(\Gamma; R)$ in the norm $\|f\|=||\langle f\,,\,
f\rangle\|_\infty^{1/2}$  is a right-Hilbert
$C_0(R^{(0)})$-module.  Finally,  left multiplication by $f\in
C_c(\Gamma; R)$ extends to an adjointable operator $\phi(f)$ on
$H(\Gamma; R)$; this defines a $*$-homomorphism $\phi :
C_c(\Gamma; R) \to \Ll (H(\Gamma; R))$. The twisted groupoid
$C^*$-algebra \tgcsa{\Gamma}{R} is defined to be the completion
of $C_c(\Gamma; R)$ in the operator norm, and $C_0(R^{(0)})$ is
identified with the closure of $C_c(R^{(0)})$ in
$\tgcsa{\Gamma}{R}$.

We show that $(\tgcsa{\Gamma}{R}, C_0(R^{(0)}))$ is a diagonal
pair in the sense of Definition~\ref{dfn:diagonal}. This follows
from \cite[Proposition~2.9]{Kumjian1986} and
Corollary~\ref{cor:diagonals are diagonals} once we establish
that $C_0(R^{(0)})$ contains an approximate identity for
$\tgcsa{\Gamma}{R}$. For this, let $(K_n)^\infty_{n=1}$ be an
increasing sequence of compact subsets of $R^{(0)}$ such that
$R^{(0)}=\bigcup_n K_n$, and for each $n \in \NN$, fix $g_n\in
C_c(R^{(0)})$ such that $g_n|_{K_n} = 1$.  Since
$(fg_n)(\rho)=f(\rho)g_n(s(\rho))$ for each compactly supported
section $f$, the $g_n$ form an approximate identity for
\tgcsa{\Gamma}{R}.

For the next result, recall from \cite[Remark~4.2]{Kumjian1988}
that a trivial twist over $R$ is isomorphic to $R \times \TT \to
R$.   The reduced norm on $C_c(R)$ is variously defined in the
literature; see, for example, \cite[p.~82]{Renault1980} and
\cite[p.~146]{ADR}, and also \cite[\S{3}]{sims-williams} for a
discussion of the equivalence of these two definitions. There
are also two definitions of the reduced norm on $C_c(\Gamma;R)$:
one using the operator norm outlined above and the other based
on  induced representations from point evaluations on
$C_0(R^{(0)})$  used in \cite[p.~40]{Renault2008}. The
equivalence of the two follows from the observation  that
\[
    (\pi_x(f)\xi|_{s^{-1}(x)} \,|\, \eta|_{s^{-1}(x)})_{H_x}
        = \overline{\langle \phi(f)\xi\,,\, \eta\rangle_{C_0(R^{(0)})}(x)}
\]
for compactly supported sections $f\in C_c(\Gamma;R)$ and
$\xi,\eta\in C_c(\Gamma;R)\subset H(\Gamma;R)$ (see also the
discussion in  \cite[p.~40]{Renault2008}).

\begin{lemma}\label{lem:C* trivial twist}
If $\Gamma \to R$ is a trivial twist, then $\tgcsa{\Gamma}{R}
\cong C^*_{\red}(R)$.
\end{lemma}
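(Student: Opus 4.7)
The plan is to use the splitting of the trivial twist to exhibit an explicit $*$-isomorphism $C_c(\Gamma; R) \to C_c(R)$ and then verify that it extends to an isomorphism of their $C^*$-completions.

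Since $\Gamma \to R$ is trivial we may assume $\Gamma = R \times \TT$ with $\TT$-action $s \cdot (\rho, t) = (\rho, st)$ and quotient map $q(\rho, t) = \rho$. A function $f \in C_c(\Gamma; R)$ satisfies $f(\rho, t) = t f(\rho, 1)$, so it is completely determined by its restriction to the splitting $\rho \mapsto (\rho, 1)$. This gives a vector-space bijection $\Phi : C_c(\Gamma; R) \to C_c(R)$, $\Phi(f)(\rho) := f(\rho, 1)$. Simultaneously, the equivalence relation $(z, (\rho, t)) \sim (sz, (\rho, st))$ defining $L = L(\Gamma)$ has canonical representatives at $t = 1$, yielding a trivialisation $L \cong R \times \CC$ of the line bundle via $[(z, (\rho, t))] \mapsto (\rho, t^{-1}z)$. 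Under this trivialisation the Fell-bundle multiplication and involution on $L$ become pointwise complex multiplication and conjugation on each fibre, and the section $\tilde{f}$ associated to $f \in C_c(\Gamma; R)$ is exactly $\Phi(f)$. It then follows by direct substitution into the formulae defining the convolution and involution on $C_c(\Gamma; R)$ that $\Phi$ is a $*$-algebra isomorphism onto $C_c(R)$ with its usual convolution structure.

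To extend $\Phi$ to the $C^*$-completions I would match the Hilbert modules used in the two constructions. The conditional expectation $P : C_c(\Gamma; R) \to C_c(R^{(0)})$ corresponds under $\Phi$ to restriction of functions from $R$ to $R^{(0)}$, and hence the inner product $\langle f, g \rangle = P(f^*g)$ corresponds to the standard $C_0(R^{(0)})$-valued inner product
\[
\langle \phi, \psi \rangle(x) = \sum_{\alpha \in R : s(\alpha) = x} \overline{\phi(\alpha)}\, \psi(\alpha)
\]
on $C_c(R)$ that is used to define the regular representation. So $\Phi$ extends to a unitary from $H(\Gamma; R)$ onto the Hilbert $C_0(R^{(0)})$-module used to define $C^*_{\red}(R)$, and it intertwines the left actions by the preceding paragraph. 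Therefore the operator norms agree and $\Phi$ extends to a $C^*$-isomorphism $\tgcsa{\Gamma}{R} \cong C^*_{\red}(R)$.

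The real content lies entirely in choosing the right trivialisation of $L$; once that is done the verification is routine, so I do not anticipate a genuine obstacle. The only mild subtlety is keeping track of $\TT$-action conventions to make sure that the trivialisation is compatible with both the $\TT$-equivariance in $C_c(\Gamma; R)$ and the Fell-bundle structure on $L$, so that multiplication really reduces to ordinary complex multiplication rather than acquiring a nontrivial cocycle.
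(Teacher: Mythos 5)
Your algebraic half is correct and coincides with the first step of the paper's proof (which simply declares it routine): with $\Gamma = R \times \TT$, the trivialisation $[(z,(\rho,t))] \mapsto (\rho, t^{-1}z)$ of $L$ is compatible with the Fell-bundle operations, the section $\tilde f$ becomes $\Phi(f)(\rho) = f(\rho,1)$, and $\Phi$ is a $*$-isomorphism of $C_c(\Gamma;R)$ onto $C_c(R)$ carrying $P(f^*g)$ to the standard $C_0(R^{(0)})$-valued inner product $\langle \phi,\psi\rangle(x) = \sum_{s(\alpha)=x}\overline{\phi(\alpha)}\psi(\alpha)$. Where you part company with the paper is the passage to completions, and this is exactly the point the paper's proof is really about. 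Your final step in effect takes $C^*_{\red}(R)$ to be \emph{defined} as the completion of $C_c(R)$ in the norm of left multiplication on the Hilbert module obtained from that inner product. But the definitions of the reduced norm in the sources the paper cites are different on their face: Renault's is $\|f\|_r = \sup_\mu\|\Ind_\mu(f)\|$ over Radon measures $\mu$ on $R^{(0)}$, and the other standard one uses the regular representations $\pi_x$ on $\ell^2(s^{-1}(x))$. Saying ``the operator norms agree'' therefore either presupposes the equivalence of the Hilbert-module picture with these definitions (which is true, but is a statement needing proof or a citation, e.g.\ to the discussion the paper gives just before the lemma and to the Sims--Williams reference), or it leaves the main content of the lemma unaddressed.

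The paper closes this gap as follows: for any Radon measure $\mu$ on $R^{(0)}$, the induced representation $\Ind_\mu$ acts on $H(\Gamma;R)\otimes_{\pi_\mu}L^2(R^{(0)},\mu)$ by $\xi\otimes g \mapsto \phi(f)\xi\otimes g$, so $\|\Ind_\mu(f)\|\le\|\phi(f)\|$ and hence $\|f\|_r\le\|\phi(f)\|$; conversely, choosing $\mu$ of full support makes $\pi_\mu$ faithful, hence the associated representation of $\Kk(H(\Gamma;R))$ is faithful, and since $\Ll(H(\Gamma;R)) = M(\Kk(H(\Gamma;R)))$ the representation $\Ind_\mu$ is isometric on $\tgcsa{\Gamma}{R}$, giving $\|\phi(f)\|\le\|f\|_r$. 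So your route buys a clean and conceptually transparent identification of the two Hilbert modules, but to be a complete proof against the cited definition of $C^*_{\red}(R)$ you must either add this induced-representation/faithfulness argument (or an equivalent localisation argument showing $\|\phi_R(f)\| = \sup_x\|\pi_x(f)\|$), or state explicitly that you are adopting the Hilbert-module definition of the reduced norm and justify its equivalence with the usual ones.
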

\begin{proof}
Suppose that $\Gamma$ is a trivial twist.  Then we may identify
$L$ and $\CC \times R$ and therefore $C_c(R,L)$ and $C_c(R)$.
It is routine to check that this identification preserves the
$*$-algebra structure defined above. So we just need to check
that for $f \in C_c(R)$ we have $\| f \|_r = \| \phi(f) \|$.
Let $f \in C_c(R)$. By the definition given in
\cite[p.~82]{Renault1980}, we have
\[\textstyle
\| f \|_r = \sup_\mu \| \Ind_\mu (f) \|
\]
where $\mu$ ranges over Radon measures on $R^{(0)}$. Denote by $\pi_\mu : C_0(R^{(0)}) \to
\Bb(L^2(R^{(0)}, \mu))$ the usual representation by multiplication operators. The discussion on
page 81 of \cite{Renault1980}  shows that the induced representation $\Ind_\mu$ is given on
\[
H(\Gamma;R) \otimes_{\pi_\mu} L^2(R^{(0)}, \mu)
\]
by the formula
\[
\Ind_\mu(f)(\xi \otimes g) = \phi(f)\xi\otimes g.
\]
Hence, $ \| \Ind_\mu (f) \| \le \| \phi(f) \|$ 
and so $\| f \|_r \le \| \phi(f) \|$. Now, let $\mu$ be a
measure with full  support; then $\pi_\mu$ is faithful and
hence the corresponding representation of $\Kk(H(\Gamma;R))$ is
also faithful.  Since $\Ll(H(\Gamma; R)) =
M(\Kk(H(\Gamma;R)))$, this shows that $\Ind_\mu$ is faithful on
$\tgcsa{\Gamma}{R}$.  Hence, $\| \phi(f)\| = \| \Ind_\mu(f)\|
\le \| f \|_r$.
\end{proof}

\end{document}